\begin{document}

\title{$G-$convergence, Dirichlet to Neumann maps and Invisibility}

\thanks{
Mathematics Subject Classification. Primary 35J25,35B27,35J15 , 45Q05; Secondary  42B37,     35J67}
\thanks{Supported by the ERC  307179, and the MINECO grants  MTM2011-28198 and SEV-2011-0087 (Spain)}
\author{Daniel Faraco}
\address{Departamento de Matem\'aticas - Universidad Aut\'onoma de Madrid
 and Instituto de Ciencias Matem\'aticas
CSIC-UAM-UC3M-UCM, 28049 Madrid, Spain} \email{daniel.faraco@uam.es}
\author{Yaroslav Kurylev}
\address{Dept. of Mathematics, University College London,
Gower Street, London WC1E 6BT, UK}\email{y.kurylev@ucl.ac.uk}

\author{Alberto Ruiz}
\address{Departamento de Matem\'aticas - Universidad Aut\'onoma de Madrid
 and Instituto de Ciencias Matem\'aticas
CSIC-UAM-UC3M-UCM, 28049 Madrid, Spain} \email{alberto.ruiz@uam.es}

\date{}

\allowdisplaybreaks \sloppy \theoremstyle{plain}
\newtheorem{Theorem}{Theorem}[section]
\newtheorem{Lemma}[Theorem]{Lemma}
\newtheorem{Remark}[Theorem]{Remark}
\newtheorem{Cor}[Theorem]{Corollary}
\newtheorem{question}[Theorem]{Question}
\newtheorem{example}[Theorem]{Example}
\newtheorem{Prop}[Theorem]{Proposition}
\theoremstyle{definition}

\newtheorem{Def}[Theorem]{Definition}
\newtheorem{Rem}[Theorem]{Remark}
\newcommand{\commentm}[1]{\marginpar{\footnotesize #1}}
\newtheorem{Prob}{Problem}
\numberwithin{equation}{section}
\def\halmos{{\ \vbox{\hrule\hbox{\vrule height1.3ex\hskip0.8ex\vrule}\hrule}}\par \medskip}
\def\Xint#1{\mathchoice
{\XXint\displaystyle\textstyle{#1}}%
{\XXint\textstyle\scriptstyle{#1}}%
{\XXint\scriptstyle\scriptscriptstyle{#1}}%
{\XXint\scriptscriptstyle\scriptscriptstyle{#1}}%
\!\int}
\def\XXint#1#2#3{{\setbox0=\hbox{$#1{#2#3}{\int}$}
\vcenter{\hbox{$#2#3$}}\kern-.5\wd0}}
\def\ddashint{\Xint=}
\def\dashint{\Xint-}
\def \p{\partial}
\def \e{\epsilon}
\def \beq{\begin{equation}}
\def \eeq{end{equation}}
\def \t{\tilde}
\def \r{\tilde{R}}
\def \l{\lambda}
\def \D{\mathbb{D}}
\def \div{\operatorname{div}}
\def \Re{\operatorname{Re}}
\def \Im{\operatorname{Im}}
\def \supp{\operatorname{supp}}
\def \diam{\operatorname{diam}}
\def \C{\mathbb{C}}
\def \N{\mathbb N}
\def\Lip{\operatorname{Lip}}
\def\R{\mathbb{R}}
\def\2L{\Lambda_{\tilde{\gamma}}}
\def\1L{\Lambda_{\gamma}}
\def \c{\overline}
\def \d{\partial_z}
\def \dc{\partial_{\overline z}}
\def \cd{\overline{\partial_z}}
\def \dk{\partial_{\overline k}}
\def \kd{\overline{\partial_k}}
\def \dx{\partial_x}
\def \H{H^{1/2}}
\maketitle

\begin{abstract}
We establish optimal conditions under which the G-convergence  of 
linear elliptic operators implies the convergence
of the corresponding Dirichlet to Neumann maps. As an application we show that the 
  approximate cloaking  isotropic materials from  \cite{GKLU} are independent of the source.

\end{abstract}

\section{Introduction}

We start with the definition of the   Dirichlet to Neumann map
(Voltage to current) map.  Given  an elliptic matrix $\sigma\in L^\infty(\Omega)$, for a given  boundary data
$\varphi\in\H(\partial \Omega)$, there is a unique
 solution $u\in H^1(\Omega)$ to the Dirichlet problem;
\begin{equation}\label{cond}
\begin{cases}
\nabla\cdot(\sigma\nabla u)=0 \quad \hbox{in}\,\, \Omega\\
u\big|_{\partial \Omega}=\varphi.
\end{cases}
\end{equation}

When the boundary is sufficiently smooth, the
 measurements on the boundary consist of the classical
 Dirichlet--to--Neumann map
 \begin{equation}\label{trad}\Lambda_\sigma(\varphi)=\langle \sigma\nabla u, \nu \rangle \big|_{\partial \Omega},\end{equation}
 where $\nu$ denotes the exterior unit normal to the boundary. In
 this way
 $\Lambda_\sigma:H^{1/2}(\partial \Omega) \to
 H^{{-1/2}}(\partial \Omega)$. It follows by integration by
 parts that $\Lambda_\sigma$ can also be described in the weak
 form as 
\begin{equation}\label{weak formulation}
\langle \Lambda_{\sigma}(\varphi),\psi \rangle=\int_{\Omega}\langle\sigma \nabla u,
\nabla \tilde{\psi}\rangle,
\end{equation}
where $\psi \in H^{1/2}(\partial \Omega)$ and
$\tilde{\psi} \in H^1( \Omega)$ is an extension of $\psi$ into $\Omega$. 
In case $\partial \Omega$
lacks of a proper normal, the weak formulation is still valid.

The Calder\'on inverse problem consists of the stable determination of $\sigma$ from $\Lambda_\sigma$, see \cite{SylvesterUhlmann87, HabermanTatarupp, Nachman96, AstalaPaivarinta06}  for 
the uniqueness 
in the isotropic case, 
\cite{Alessandrini88, Alessandrini90, BBR01,BFR07,CFR11, FaracoRogers12,CGR13}
 for stability and \cite{Nachman88,Nachman96} for the reconstruction.
 Much less is known in the anisotropic case except in dimension d=2 \cite{ALP}. Notice that when  the Dirichlet to Neumann map is known for all energies, uniqueness and  stability are  studied also for the anisotropic case, see  e.g. \cite{KK, AK2LT}.  
 
  The results from 
 \cite{Alessandrini88, BBR01, BFR07,CFR11, FaracoRogers12,CGR13} require  some uniform control  of the oscillations of $\sigma$ (conditional stability). 
 Unfortunately,  wild oscillations of a sequence of conductivities  $\sigma_n$ creates 
 an instability of the Calder\'on problem. This is well
expressed in terms of the G-topology \cite{D, JKO}.  It is not hard to see that  if $\sigma_n$ G-converges to $\sigma$, the
corresponding  Dirichlet to Neumann maps  converge weakly. Namely,  
for each $\varphi,\psi \in H^{1/2}(\partial \Omega)$,  

\begin{equation}\label{weak_0}
\langle \Lambda_{\sigma_h}(\varphi),\psi \rangle \to \langle \Lambda_{\sigma}(\varphi),\psi \rangle.
\end{equation}

Now, if $\sigma_n$ G-converges to $\sigma$ but does not convergence pointwise, we deduce that 
the convergence (\ref{weak_0}) does 
not imply any sort of $L^p$ convergence. (Notice  $\sigma_n,\sigma$
could be choosen to be $C^\infty$!). 

However, the stability
estimates are normally stated in terms of the operator norm and
(\ref{weak_0})  by itself does not imply the convergence in the operator
norm $\|   \|_{\mathcal{L}(H^{ {1/2}}(\partial \Omega) \to
H^{-1/2}(\partial \Omega))}$.  In \cite{AC1}, it is proved that if, in addition to the $G-$convergence, we have that $\sigma_n=\sigma=I$ on 
 $\Omega_\delta=\{x \in \Omega: d(x,\partial \Omega) \le \delta\}$, with $\Omega$ 
 being the unit disc  and $\sigma=I$, 
 then in fact the $G$-convergence implies the convergence in the operator norm.  
On the other hand,  the stability at the boundary of the inverse problem implies that, in order to obtain operator norm convergence, 
some control on the behaviour of the conductivities at the boundary is needed. For example,
 it was proved, see   \cite{SU},  \cite{Brown01}, \cite{AG} and \cite{GZ}, that, for isotropic conductivities, if
 $\lim_{n \to \infty}\|\Lambda_{\rho_n} \to \Lambda_\rho\|_{H^{1/2} \to H^{-1/2}(\partial \Omega)}=0$
  then 
 \begin{equation} \label{necessary}
 \lim_{n \to \infty} \|\rho_n -\rho \|_{L^\infty(\partial \Omega)}=0   
 \end{equation} 
 
 Thus, the G-convergence by itself can not guarantee 
 the operator norm convergence. Let $\Omega \subset \mathbb{R}^n$ and
 define, for  $K\geq 1, \delta>0$,
\begin{equation}
\begin{aligned}
M_K(\Omega)=\{& \sigma\in L^\infty ( \Omega, \,{\mathcal M}^{n\times n}):\frac 1K|\xi|^2\leq \sigma \xi\cdot \xi\leq K|\xi|^2 \\ 
&\text {for almost  every $x\in \Omega$ and  $\xi\in \mathbb R^n$} \};\\
& \Omega_\delta= \{x\in \Omega: d(x,\partial \Omega)<\delta\}.
\end{aligned}
\end{equation}
 
The following theorem seems to be essentially sharp (see comments below).

  \begin{Theorem}\label{general_0} Let $\Omega \subset \mathbb{R}^d$ be a domain. 
Assume that 
\begin{equation}\label{sharp_0}
\lim_{\delta \to 0} \delta^{-1} \left( \limsup_{n \to \infty} \|\sigma_n-\sigma\|_{L^{\infty}(\Omega_\delta)} \right)
=0
\end{equation}
and that $\sigma_n \in M_K$ converges to $\sigma$ in the sense of the
$G-$convergence. Then 
\[\lim_{n \to \infty} \|\Lambda_{\sigma_n}-\Lambda_{\sigma} 
\|_{\H(\partial \Omega) \to H^{-1/2}(\partial \Omega)}=0.\]
\end{Theorem}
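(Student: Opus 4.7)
The plan is to base the argument on Alessandrini's identity. Let $u_n\in H^1(\Omega)$ be the solution of (\ref{cond}) with coefficient $\sigma_n$ and trace $\varphi$, and let $v\in H^1(\Omega)$ be the solution with coefficient $\sigma$ and trace $\psi$. Using $v$ as the extension of $\psi$ in the weak formulation (\ref{weak formulation}) for $\Lambda_{\sigma_n}$, and noting that $\int_\Omega\sigma\nabla(u_n-u)\cdot\nabla v=0$ since $v$ is $\sigma$-harmonic and $u_n-u\in H^1_0(\Omega)$, one obtains
\[
\langle(\Lambda_{\sigma_n}-\Lambda_\sigma)\varphi,\psi\rangle=\int_\Omega(\sigma_n-\sigma)\nabla u_n\cdot\nabla v.
\]
Operator norm convergence then reduces to showing that this integral is $o(1)\|\varphi\|_{H^{1/2}}\|\psi\|_{H^{1/2}}$ uniformly on the unit ball, as $n\to\infty$.

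I would introduce a smooth cutoff $\chi\in C^\infty_c(\Omega)$ equal to $1$ on $\Omega\setminus\Omega_{2\delta}$ and vanishing on $\Omega_\delta$, with $|\nabla\chi|\le C/\delta$ supported in the collar $B_\delta:=\Omega_{2\delta}\setminus\Omega_\delta$. Setting $\eta(\delta):=\limsup_n\|\sigma_n-\sigma\|_{L^\infty(\Omega_\delta)}$, hypothesis (\ref{sharp_0}) gives $\eta(\delta)=o(\delta)$, hence in particular $\eta(\delta)\to 0$. The boundary layer piece is directly controlled by
\[
\left|\int_\Omega(1-\chi)(\sigma_n-\sigma)\nabla u_n\cdot\nabla v\right|\le C\,\eta(2\delta)\,\|\varphi\|_{H^{1/2}}\|\psi\|_{H^{1/2}},
\]
which is uniformly small on the unit ball for $n$ large and $\delta$ small.

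Since $\chi u_n,\chi v\in H^1_0(\Omega)$, testing the equations for $u_n$ (with $\sigma_n$) and for $v$ (with $\sigma$) against them converts the interior piece into a collar integral
\[
\int_\Omega\chi(\sigma_n-\sigma)\nabla u_n\cdot\nabla v = \int_{B_\delta}\bigl(u_n\,\sigma\nabla v - v\,\sigma_n\nabla u_n\bigr)\cdot\nabla\chi.
\]
I would decompose the integrand as $v(\sigma-\sigma_n)\nabla u_n+\sigma(u_n\nabla v-v\nabla u_n)$. The first summand is bounded by $(C/\delta)\,\eta(2\delta)\,\|v\|_{L^2(B_\delta)}\|\nabla u_n\|_{L^2(B_\delta)}$, which combined with the Hardy-type estimate $\|v\|_{L^2(B_\delta)}\le C\sqrt{\delta}\,\|\psi\|_{H^{1/2}}$ (obtained from $|v(x',t)|^2\le 2|\psi(x')|^2+2t\int_0^t|\partial_t v|^2\,ds$ in boundary normal coordinates) yields $O(\eta(\delta)/\sqrt{\delta})=o(\sqrt{\delta})\|\varphi\|_{H^{1/2}}\|\psi\|_{H^{1/2}}$. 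Here the scale $\delta^{-1}$ of (\ref{sharp_0}) exactly matches the scale $\delta^{-1}$ of the cutoff derivative, explaining the sharpness.

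The Wronskian-type remainder $\int_{B_\delta}\sigma(u_n\nabla v-v\nabla u_n)\cdot\nabla\chi$ is the main obstacle. It would vanish if $u_n$ were $\sigma$-harmonic, since then $\sigma(u\nabla v-v\nabla u)$ is divergence free and $\chi$ vanishes near $\partial\Omega$; writing $u_n=u+w_n$ with $w_n\in H^1_0(\Omega)$ and $w_n\to 0$ in $L^2(\Omega)$ by Rellich reduces it to $\int_{B_\delta}\sigma(w_n\nabla v-v\nabla w_n)\cdot\nabla\chi$. The piece with $w_n$ undifferentiated is $(C/\delta)\|w_n\|_{L^2(\Omega)}\|\nabla v\|_{L^2(\Omega)}\to 0$ for each fixed $\delta$ as $n\to\infty$. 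The piece with $\nabla w_n$ does not vanish in $L^2(B_\delta)$ (oscillations from the G-convergence); it is treated using the equation $\nabla\cdot(\sigma_n\nabla w_n)=\nabla\cdot((\sigma-\sigma_n)\nabla u)$ and a further integration by parts against $\chi v\in H^1_0$, transferring the $\sigma-\sigma_n$ factor onto a bulk integral controlled via G-convergence in the interior plus a collar integral again bounded by $\eta(\delta)/\sqrt{\delta}$. A diagonal argument in $(n,\delta)$ then concludes operator norm convergence.
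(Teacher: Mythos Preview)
Your argument establishes strong convergence of the Dirichlet-to-Neumann maps but not operator norm convergence. The gap is in the Wronskian remainder: the term $\int_{B_\delta} w_n\,\sigma\nabla v\cdot\nabla\chi$ is bounded by $C\delta^{-1}\|w_n^\varphi\|_{L^2(\Omega)}\|\psi\|_{H^{1/2}}$, and $G$-convergence gives $\|w_n^\varphi\|_{L^2(\Omega)}\to 0$ only for each \emph{fixed} $\varphi$, not uniformly over the unit ball of $H^{1/2}(\partial\Omega)$. A diagonal argument in $(n,\delta)$ cannot repair this, since for each $\delta$ the rate at which $\|w_n^\varphi\|_{L^2}$ tends to zero depends on $\varphi$. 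The companion term with $\nabla w_n$ is worse: your ``further integration by parts against $\chi v$'' is circular. If you carry it out, $\int_{B_\delta}\sigma(w_n\nabla v - v\nabla w_n)\cdot\nabla\chi$ collapses to $-\int_\Omega\sigma\nabla w_n\cdot\nabla(\chi v)$, which after substituting the equation for $w_n$ becomes $-\int_\Omega(\sigma-\sigma_n)\nabla u_n\cdot\nabla(\chi v)$; expanding $\nabla(\chi v)$ returns exactly the interior integral you started from plus the collar term already extracted. No new smallness is produced. Moreover, the ``bulk integral controlled via $G$-convergence in the interior'' is not available: $G$-convergence of $\sigma_n$ to $\sigma$ does \emph{not} imply that $\int\chi(\sigma-\sigma_n)\nabla u\cdot\nabla v\to 0$ for fixed $\sigma$-harmonic $u,v$ (think of periodic homogenization, where the weak-$*$ limit of $\sigma_n$ differs from the $G$-limit).

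The paper's proof handles exactly this difficulty by a compactness mechanism absent from your sketch. It decomposes $\Lambda_{\sigma_n}-\Lambda_\sigma = D_{n,\delta}+V_{n,\delta}+M_{n,\delta}$, where the first two pieces are controlled by $C\delta^{-1}\|\sigma_n-\sigma\|_{L^\infty(\Omega_{2\delta})}$ uniformly (much as your boundary-layer terms). The remaining piece $M_{n,\delta}$, which encodes the $\sigma$-harmonic part $m_{n,\delta}^\psi$ of $d_n^\psi$ in the collar, is factored as $\mathcal{T}^\delta\circ\mathcal{A}_{n,\delta}$ with $\mathcal{A}_{n,\delta}$ uniformly bounded and $\mathcal{T}^\delta$ \emph{compact} (Caccioppoli on $\sigma$-solutions in the collar plus Rellich). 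One then shows $M_{n,\delta}^*(\psi)\to 0$ strongly for each $\psi$ with rate $C\delta^{-1}\limsup_n\|\sigma_n-\sigma\|_{L^\infty(\Omega_{2\delta})}$; compactness of $(\mathcal{T}^\delta)^*$ upgrades this to norm convergence via finite-rank approximation. This compactness step --- passing from strong convergence to norm convergence by factoring through a compact operator --- is the essential idea your argument is missing. A secondary issue: your Hardy-type bound $\|v\|_{L^2(B_\delta)}\le C\sqrt{\delta}\|\psi\|_{H^{1/2}}$ relies on boundary normal coordinates, but no regularity on $\partial\Omega$ is assumed; fortunately the cruder bound $\|v\|_{L^2(B_\delta)}\le C\|\psi\|_{H^{1/2}}$ already suffices for that particular term, so this is not the main obstacle.
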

 
Let us emphasize that no regularity assumption is made on the domain or on the conductivities.  The condition (\ref{sharp_0})
can be read as a weak version of 
\begin{equation}
\lim_{n \to \infty} (\|\nabla_\nu (\sigma_n-\sigma) \|_{L^\infty(\partial \Omega)}+\| \sigma_n-\sigma \|_{L^\infty(\partial \Omega)})=0.
\end{equation}

 Note that the above conditions are   natural,  since the convergence of the D-N maps 
is known to imply   convergence 
of the 
conductivities  and their normal derivatives at the boundary  under mild regularity assumptions (\cite{SU,Alessandrini90,Brown01,AG, GZ}).  Moreover,  in Theorem
4.9 we provide an explicit example which shows that, just the convergence $\sigma_n$ to $\sigma$ in $L^\infty(\partial \Omega)$  together with 
their convergence in $L^p(\Omega)$, for any $p<\infty$,
are not sufficient for the norm-convergence of the DN maps.

 The proof of Theorem \ref{general_0} 
is very different in spirit
to that from \cite{AC1} and we believe it to be of an independent interest. The proof in  \cite{AC1} uses
 the decay properties of 
 the spherical harmonics away from the boundary.  Under 
 some regularity assumptions on $\sigma$, 
which  in turn imply certain properties  of the corresponding Poisson kernel,
a related strategy works (estimating decay properties of solutions with oscillating boundary data away from the boundary). 
To prove theorem \ref{general_0}  
we argue in a different manner.  Namely, we 
sudy the behaviour of  the solutions near the boundary. 
If, for example, $\sigma_n=\sigma$ on $\Omega \setminus \Omega'$, where $\Omega' \subset \subset \Omega$,
then
the difference of two solutions of the Dirichlet problem associated with   $\sigma_n$ and $\sigma$
 solves the same elliptic equation in $\Omega \setminus \Omega'$. It turns out that the resulting operators from the boundary into
$\Omega \setminus \Omega'$
are compact in a proper space. Our way to codify this is to factorize 
$\Lambda_{\sigma_n}-\Lambda_{\sigma}= T \circ A_n$, where $T$ is compact.  The arguments
behind this procedure are quite robust and allow to relax the condition 
$\sigma_n=\sigma$ on $\Omega \setminus \Omega'$ to (\ref{sharp_0}).

Next we turn to applications of our techniques to what is called an approximate cloaking.
 In the last decade it has been shown that the failure of uniqueness in 
 the Calder\'on problem is related to the modeling of invisible materials and 
 what is called acoustic and electromagnetic cloaking, see \cite{GLU, PSS, L, GKLU1}. It 
 is shown there that
the available conductivities yielding perfect cloaking are singular and anisotropic. 
Recently it has been shown that they can be approximated by elliptic isotropic materials in the G-convergence sense \cite{GKLU},
see also \cite{KV1, NV, LLRU} for different approachs. Leaving precise formulations of the involved operators and a general case to section 3, 
assume that $\Omega=B_3$, i.e. the ball of radius $3$ in $\R^3$ and $q \in L^\infty(B_1)$ is an arbitrary potential.
Consider the D-N maps associated with
the Dirichlet problems, with spectral parameter $\lambda$, for the free space, 
\begin{equation}\label{cond_1}
\begin{cases}
-\nabla\cdot\nabla u=\lambda u\\
u\big|_{\partial \Omega}=\varphi,
\end{cases}
\end{equation}
cf. (\ref{cond}), and for cloaked space,
\begin{equation}\label{cond_n}
\begin{cases}
-g_n^{-1/2}\nabla \cdot \sigma_n\nabla u+qu=\lambda u\\
u\big|_{\partial \Omega}=\varphi.
\end{cases}
\end{equation}
Here 
the weight factors $g_n$ and the isotropic conductivities $\sigma_n=\gamma_n I$, which are supported
in $\{x:\, 1 \leq |x|\leq 2 \}$, are chosen independent of $q$. Denoting by $\Lambda^\l_{out}$ and 
$\Lambda^\l_n$ the corresponding D-N operators, we have

 \begin{Theorem} \label{invisibility}
The exists a sequence $g_n,\, \gamma_n$ such that, for all except a countable number of $\l$,
$$
\|\Lambda_{n}^\l-\Lambda_{out}^\l \|_{H^{1/2}(\p \Omega) \to H^{-1/2}(\p \Omega)}
\to 0, \quad \hbox{as}\,\, n \to \infty.
$$
\end{Theorem}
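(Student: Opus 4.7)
The plan is to mimic the strategy sketched for Theorem \ref{general_0} in the spectral setting. First, I would recall the construction of $(\gamma_n, g_n)$ from \cite{GKLU}: by design, $\gamma_n \equiv 1$ and $g_n \equiv 1$ on $B_3\setminus \bar B_2$, so there is a boundary collar on which the cloaked and free-space operators coincide with $-\Delta - \lambda$; in particular the analogue of condition (\ref{sharp_0}) is trivially satisfied near $\partial\Omega$. The countable set of excluded $\lambda$ is the union of the Dirichlet spectra of the free-space problem, of each cloaked problem $L_n = -g_n^{-1/2}\nabla\cdot\sigma_n\nabla + q$, and of the auxiliary collar Dirichlet problem for $(-\Delta-\lambda)$ in $B_3\setminus\bar B_2$ with zero data on $\partial B_3$. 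Outside this set, both problems (\ref{cond_1}) and (\ref{cond_n}) are uniquely solvable, and a standard Fredholm argument yields uniformly bounded solution operators $\varphi \mapsto u^\varphi, u_n^\varphi$.

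Second, I would set up the factorization. For $\varphi \in H^{1/2}(\partial B_3)$ the difference $w_n = u_n^\varphi - u^\varphi$ satisfies $(-\Delta - \lambda) w_n = 0$ in the collar $B_3\setminus\bar B_2$ with $w_n|_{\partial B_3}=0$. Let $T_\lambda : H^{1/2}(\partial B_2) \to H^{-1/2}(\partial B_3)$ be the operator sending $f$ to $\partial_\nu W|_{\partial B_3}$, where $W$ solves $(-\Delta-\lambda)W=0$ in the collar with $W|_{\partial B_3}=0$ and $W|_{\partial B_2}=f$. Since the Dirichlet data on $\partial B_2$ are spatially separated from the trace on $\partial B_3$, interior elliptic regularity shows that $T_\lambda$ is smoothing, hence compact from $H^{1/2}(\partial B_2)$ into $H^{-1/2}(\partial B_3)$. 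Defining $A_n\varphi := w_n|_{\partial B_2}$, one obtains
\[
\Lambda_n^\lambda - \Lambda_{out}^\lambda = T_\lambda \circ A_n,
\]
where the $A_n$ are uniformly bounded from $H^{1/2}(\partial B_3)$ into $H^{1/2}(\partial B_2)$.

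Third, I would feed in the approximate-cloaking property from \cite{GKLU}: for each fixed $\varphi$, the sequence $u_n^\varphi$ converges weakly to $u^\varphi$ in $H^1(B_3\setminus B_1)$, and hence $A_n\varphi \rightharpoonup 0$ in $H^{1/2}(\partial B_2)$. By the compact embedding $H^{1/2}(\partial B_2)\hookrightarrow L^2(\partial B_2)$ this upgrades to strong convergence $A_n\varphi \to 0$ in $L^2(\partial B_2)$. Combined with the uniform boundedness of the $A_n$ and the compactness (indeed smoothing nature) of $T_\lambda$, the same argument behind Theorem \ref{general_0} converts this pointwise convergence into uniform operator-norm convergence $\|T_\lambda \circ A_n\|_{H^{1/2}\to H^{-1/2}} \to 0$, which is the desired statement.

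The main obstacle is justifying the $q$- and $\lambda$-independent cloaking statement $u_n^\varphi \rightharpoonup u^\varphi$ in $H^1(B_3\setminus B_1)$. This requires checking that the $G$-convergence from \cite{GKLU} is robust under the addition of the spectral parameter $\lambda$ and an arbitrary bounded potential $q$ confined to $B_1$; the essential content of approximate cloaking is precisely that neither the potential nor the spectral effects inside $B_1$ can propagate across the singular cloak in the limit. Identifying the countable exceptional set of $\lambda$'s and verifying that the cloaking convergence holds in a neighborhood of each admissible $\lambda$ (so that the Fredholm resolvent bounds are uniform in $n$) is the residual technical point.
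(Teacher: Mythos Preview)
Your strategy coincides with the paper's: exploit that the cloaked and free operators agree on a collar, factor $\Lambda_n^\lambda-\Lambda_{out}^\lambda$ as (compact)$\circ$(uniformly bounded), prove strong convergence, then upgrade to norm convergence. The paper factors through the solution space $L^2_s(\Omega\setminus B_{11/4})$ rather than through traces on $\partial B_2$, but this is cosmetic---your Poisson-type $T_\lambda$ is compact for the same interior-regularity reason that the paper's $\mathcal T^\lambda$ is.

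Two points need sharpening. First, ``$T_\lambda$ compact, $A_n$ uniformly bounded, $A_n\to 0$ strongly'' does \emph{not} by itself yield $\|T_\lambda A_n\|\to 0$; a compact operator on the left of a strongly-null sequence can have norm bounded away from zero. The paper's argument (end of Section~2, repeated in Section~3) uses the self-adjointness relation $(\Lambda_n^\lambda-\Lambda_{out}^\lambda)^*=\Lambda_n^{\bar\lambda}-\Lambda_{out}^{\bar\lambda}$ to rewrite the difference as $(\mathcal A_n^{\bar\lambda})^*\circ(\mathcal T^{\bar\lambda})^*$, placing the compact factor on the \emph{right}; only then does the finite-rank projection trick go through. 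Your factorization admits the same manoeuvre, but you should spell it out rather than point to ``the same argument''.

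Second, your appeal to a ``standard Fredholm argument'' for uniform solution bounds is precisely where the cloaking problem bites: the $\sigma_n$ are not uniformly elliptic (the lower ellipticity bound degenerates as $n\to\infty$), so uniform $H^1$ control of $u_n^\varphi$ in the collar---hence uniform boundedness of your $A_n$---does not come for free. The paper imports this from the weighted-space resolvent estimates of \cite{GKLU} (equations (\ref{R1})--(\ref{step-4}) here), first obtaining a uniform $L^2$ bound on $d_n^\psi$ in $\Omega\setminus B_2$ and then upgrading via Caccioppoli. You correctly flag this as the residual technical point; be aware it is the substantive one. Relatedly, the paper's exceptional set is just $\hbox{spec}(\L_{sing})$: once $\lambda$ avoids it, the resolvent-convergence machinery of \cite{GKLU} automatically keeps $\lambda$ out of $\hbox{spec}(L_n)$ for large $n$, so there is no need to excise $\bigcup_n\hbox{spec}(L_n)$ by hand.
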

This theorem means that, by a proper choice of $g_n,\, \gamma_n$, one can better 
and better hide from an external
observer any potential in $B_1$.
The novelty here is that we have an operator norm convergence instead of the strong 
convergence in
\cite{GKLU}.  We refer the reader to section 3 for the more general case as well as 
the explanation of the nature of
the exceptional points $\l$.

Let us note that, motivated by the applications to acoustic, quantum and electromagnetic 
cloaking, we extend our results 
of the type of Theorem \ref{general_0}, to the  operators  
   
\begin{equation} \label{8.1.4}
\L_n u=-\nabla_{A_n}\cdot \sigma_n \nabla_{A_n}u + q_n u, \quad u|_{\p \Omega}=0,
\end{equation} \label{operator-L}
where $\nabla_A= \nabla+i A$ with $A$ being a real one-form.
However, for the sake of brevity, we do so only for the case 
when $\sigma_n=\sigma, \, A_n=A,\, q_n=q$ in 
$\Omega \setminus \Omega'$.

Finally, we point out that, since the G-convergence rules out general stability results 
with respect to $L^p$ classes, one is tempted to conjecture that the convergence of the 
 D-N maps implies the G-convergence.  Recall that if $F$ is a diffeomorphism
 of $\Omega$, which is the 
 identity at the boundary, then $\Lambda_{F^*(\sigma)}=\Lambda_\sigma$. As   discussed for 
 example in \cite{AC2}, the isotropic conductivities are G-dense in the 
 set of anisotropic 
conductivities, so that
 the only hope is to recover from the D-N maps the G-limit up to a gauge transformation. 
In contrast to the previous results on the conditional stability, the 
compactness of the sets $M_K$ in the G-topology indeed provides a  stability result 
which is unconditional respect to regularity (we still require ellipticity).

\begin{Theorem} \label{stability}
Let $d=2, \sigma_n \in M_K$. Then   
\begin{equation} \label{29.1.07}
\lim_{n \to \infty} \Lambda_{\sigma_n}=\Lambda_{\sigma},
\end{equation}
weakly in  $H^{-1/2}(\partial \Omega)$ if and only if  there exists a sequence of quasiconformal maps 
 $F_n:\Omega \to \Omega,\, F_n|_{\p \Omega}= id|_{\p \Omega},$  such that,
 in the sense of the $G-$convergence,
\begin{equation} \label{3.17.4}
F_n^*(\sigma_n) \to \sigma. 
\end{equation}
\end{Theorem}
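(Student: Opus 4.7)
The plan is to combine the G-compactness of $M_K$ with the 2D anisotropic Calder\'on uniqueness theorem of Astala, Lassas and P\"aiv\"arinta \cite{ALP}. For the ``if'' direction, assume $F_n^*(\sigma_n) \to \sigma$ in the G-sense. By (\ref{weak_0}) this forces $\Lambda_{F_n^*(\sigma_n)} \to \Lambda_\sigma$ weakly; since $F_n|_{\p\Omega} = \mathrm{id}$, the gauge invariance $\Lambda_{F_n^*(\sigma_n)} = \Lambda_{\sigma_n}$, obtained by a change of variables in the weak formulation (\ref{weak formulation}), yields (\ref{29.1.07}).

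For the ``only if'' direction I would assemble four facts: (A) the Spagnolo--Murat--Tartar sequential compactness of $M_K$ in the G-topology \cite{D,JKO}; (B) metrizability of the G-topology on any $M_{K'}$; (C) the theorem of \cite{ALP} that any two conductivities in $M_K$ with the same D-N map are related by a quasiconformal pullback fixing $\p\Omega$, whose distortion is bounded in terms of $K$; and (D) invariance of G-convergence in 2D under a \emph{fixed} quasiconformal change of coordinates. The subsequence argument then runs as follows: starting with any subsequence of $\sigma_n$, (A) supplies a sub-subsequence $\sigma_{n_{k_j}}$ G-converging to some $\tilde\sigma$; combining (\ref{weak_0}) with the hypothesis $\Lambda_{\sigma_n}\to\Lambda_\sigma$ forces $\Lambda_{\tilde\sigma}=\Lambda_\sigma$; by (C) there is a quasiconformal $F$, identity on $\p\Omega$, with $\sigma=F^*\tilde\sigma$; and by (D) this promotes to $F^*\sigma_{n_{k_j}} \to \sigma$ in the G-sense.

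To upgrade this subsequence statement to an explicit sequence $F_n$ valid for every $n$, fix using (B) a metric $d$ for the G-topology on $M_{K'}$, and choose $F_n$ quasiconformal and identity on $\p\Omega$, of distortion bounded enough that $F_n^*\sigma_n \in M_{K'}$, achieving
\begin{equation*}
d(F_n^*\sigma_n, \sigma) \leq \inf_F d(F^*\sigma_n, \sigma) + \tfrac{1}{n}.
\end{equation*}
The subsequence argument above shows that every subsequence of the real sequence $\inf_F d(F^*\sigma_n, \sigma)$ admits a sub-subsequence tending to zero, so the whole sequence does, yielding (\ref{3.17.4}).

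The main obstacle I anticipate is (D). In two dimensions quasiconformal maps lie in $W^{1,2}_{\mathrm{loc}}$, so a change of variables converts $\nabla\cdot(\sigma_n\nabla u_n)=f$ into an equation with coefficient $F^*\sigma_n\in M_{K'}$, after which the variational characterization of G-convergence for solutions and fluxes transfers convergence from one conductivity to its pullback. A secondary technical point is that all the quasiconformal pullbacks appearing in the argument must have uniformly bounded distortion, so that they lie in a single $M_{K'}$ and the compactness in (A) and the metric in (B) apply globally; this is precisely the 2D content that makes (A)--(D) all compatible and is why the analogous statement in higher dimensions is not accessible by the same route.
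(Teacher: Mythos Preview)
Your proposal is correct and follows essentially the same route as the paper: the paper likewise uses the gauge invariance $\Lambda_{F^*(\sigma)}=\Lambda_\sigma$ for the ``if'' direction, and for the converse combines G-compactness and metrizability of $M_K$ (your (A), (B)), the Astala--Lassas--P\"aiv\"arinta characterization of the equivalence class (your (C)), and a lemma that G-convergence is preserved under a fixed quasiconformal change of variables (your (D)), closing with the same subsequence-of-subsequence/contradiction argument in the metric $d_G$ and the same uniform distortion bookkeeping via $M_{K^2}$.
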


Let us emphasize that since there is no requirement at the boundary here we speak only of weak convergence of the D-N maps.

The paper is structured as follows. In section 2 we start by proving the convergence of the 
D-N maps for
the operators of form (\ref{8.1.4}),  
assuming that $\sigma_n=\sigma, \, A_n=A,\, q_n=q$ in  a neighborhood of the boundary,
see Theorem~\ref{2.1}. Note that
 this is the case which will be needed for applications to aprroximate cloaking considered
 in section 3. In section 4 we prove Theorem~\ref{general_0} and in section 5 we prove Theorem~\ref{stability}.

{\bf{Acknowledgments:}} We thank G.Alessandrini, R.Brown and J.Sylvester 
 for inspiring conversations on the problem. We also thank G.Alessandrini   for suggesting that a condition similar 
to (\ref{sharp_0})
might hold. The research started during visit of the three authors to the 
Isaac Newton Institute in Cambridge during the program 
"Inverse problems" in 2011, was continued during several visits of the second author to Madrid and  during the program 
"Inverse problems and applications" at the Mittag-Leffer Institute in Stockholm in 2013. 
The second author would also like to thank ACMAC, Heraklion which he
visited during the final stage of the preparation of the manuscript. We would like to thank for the fantastic 
research enviroment in all these occasions.

\begin{section}{Operators which coincide near the boundary}

Let   $\Omega \subset \mathbb{R}^n$ be any bounded domain.
We consider the  conductivity  equations with magnetic potential $A_n$ and electrical 
potential 
$q_n\in L^\infty(\Omega)$ and   the spectral parameter $\lambda\in \mathbb C$. Namely, for 
 $u \in H^1(\Omega, \mathbb{C})$ we define the Dirichlet problem for the corresponding 
differential operator $\L_n$: 
\begin{equation}
\L_n u=-\nabla_{A_n}\cdot \sigma_n \nabla_{A_n}u + q_n u, \quad u|_{\p \Omega}=0.
\end{equation} \label{operator-L}
Here 
\begin{equation} \label{1.1.03}
\sigma_n\in M_K,\, K>1, \quad \hbox{i.e.}\,\, \frac{1}{K} I \leq \sigma_n(x) \leq K I,\, x \in \Omega,
\end{equation}
and
\begin{equation} \label{2.1.03}
A_n \in L^\infty (\Omega, \R^d),\, \, \|A_n\|_\infty \leq  K, \quad
q_n \in L^\infty (\Omega, \R),\,\,\|q_n\|_\infty \leq K,
\end{equation}
 where for simplicity we assume all $K's$ to be the
same. The magnetic gradient  is given by
$\nabla_{A_n}u= \nabla  u +iA_n u.$
Note that conditions (\ref{1.1.03}), (\ref{2.1.03}) imply the existence of $\l(K)$, such that
$$
(-\infty, \l(K)) \cap \hbox{spec}(\L_n)= \emptyset.
$$
If  $\lambda \notin \hbox{spec}(L_n)$, then for a given boundary data
$\psi\in\H(\partial \Omega)$ there is a unique
 solution $u_n=u_n^\psi(\l) \in H^1(\Omega)$ to the Dirichlet problem;
\begin{equation}\label{condmagn}
\begin{cases}
\L_n^\lambda u_n:=(\L_n-\l) u_n=0\\
u_n\big|_{\partial \Omega}=\psi.
\end{cases}
\end{equation}

For the  regular domains we define  the Dirichlet to Neumann map, 
$\Lambda_n^\lambda: \H(\p \Omega) \to H^{-1/2}(\p \Omega) $ by
$$\Lambda_n^\lambda(\psi) = \nu\cdot \sigma_n \nabla_{A_n} u_n.$$
It follows by integration by
 parts that $\Lambda_n^\lambda$ can also be described in the weak
 form as
\begin{equation}\label{weak formulation}
\langle \Lambda_n^\lambda(\psi),\varphi \rangle=\int_{\Omega}(\sigma_n \nabla_{A_n} u_n\cdot
\overline{\nabla_{A_n} \tilde{\varphi}} + (q_n-\lambda) u_n\overline{\tilde\varphi}),
\end{equation}
where $\varphi \in \H(\partial \Omega)$ and
$\tilde{\varphi} \in H^1(\Omega)$ is an extension of ${\varphi} $ to $\Omega$. 
We will denote the solution of (\ref{condmagn}) by 
$u^\psi_n$ or even $u_n$  (we omit the  dependence on $\lambda$). In this  section  we  prove
that

\begin{Theorem} \label{2.1} Let $\L _n,\, \L$ be operators of form (\ref{operator-L})
which satisfy (\ref{1.1.03}), (\ref{2.1.03}). Assume that there is $\Omega'$
with ${\overline \Omega'} \subset \subset \Omega$ such that 
$\sigma_n=\sigma $, $A_n=A$ and $q_n=q$ on $\Omega\setminus \Omega'$.
Then, if the operators $\L _n$ $G$- converge to $\L$, we have
\begin{equation} \label{28.3.1}
\|\Lambda_n^\lambda - \Lambda_0^\lambda\|_{H^{1/2}\to H^{-1/2}} \to 0.
\end{equation} 
Here $\l \notin \hbox{spec}(\L)$ and, for any  $\Bbb K \subset \C$  being a compact  set such that 
$\Bbb K\cap \hbox{spec}(\L)= \emptyset$, 
 the convergence is uniform for  $\lambda\in \Bbb K$.
\end{Theorem}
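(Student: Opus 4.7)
The plan is to derive an Alessandrini-type identity that localizes $\Lambda_n^\lambda-\Lambda_0^\lambda$ to an integral over $\Omega'$, factor the result through a fixed compact operator, and then close by a compactness-and-contradiction argument. First, taking $\tilde\varphi=u_0^\varphi$ in the weak form of both $\Lambda_n^\lambda\psi$ and $\Lambda_0^\lambda\psi$ and using the weak equation for $u_0^\varphi$ against the admissible test function $w_n^\psi:=u_n^\psi-u_0^\psi\in H_0^1(\Omega)$, one obtains
\[
\langle(\Lambda_n^\lambda-\Lambda_0^\lambda)\psi,\varphi\rangle=\int_{\Omega'}\bigl[\sigma_n\nabla_{A_n}u_n^\psi\cdot\overline{\nabla_{A_n}u_0^\varphi}-\sigma\nabla_A u_n^\psi\cdot\overline{\nabla_A u_0^\varphi}+(q_n-q)u_n^\psi\overline{u_0^\varphi}\bigr],
\]
the integrand being localized in $\Omega'$ by the coincidence of coefficients outside. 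Arranging this as a bounded sesquilinear form evaluated at $H_n\psi:=(u_n^\psi,\nabla u_n^\psi)|_{\Omega'}$ and $K\varphi:=(u_0^\varphi,\nabla u_0^\varphi)|_{\Omega'}$ in $X:=L^2(\Omega')\oplus L^2(\Omega')^d$ yields a factorization $\Lambda_n^\lambda-\Lambda_0^\lambda=K^*J_nH_n$, in which $H_n$ is uniformly bounded (by the a priori estimate $\|u_n^\psi\|_{H^1}\le C\|\psi\|_{H^{1/2}}$), $J_n:X\to X$ is uniformly bounded (encoding the coefficient differences), and $K:H^{1/2}(\partial\Omega)\to X$ is a \emph{fixed} operator.

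The key structural step is that $K$ (hence $K^*$) is compact. If $\varphi_k\rightharpoonup\varphi$ in $H^{1/2}(\partial\Omega)$, well-posedness of the limit Dirichlet problem gives $u_0^{\varphi_k}\rightharpoonup u_0^\varphi$ in $H^1(\Omega)$, so by Rellich $u_0^{\varphi_k}\to u_0^\varphi$ strongly in $L^2(\Omega_2)$ for any $\Omega_2$ with $\Omega'\subset\subset\Omega_2\subset\subset\Omega$. Since $z_k:=u_0^{\varphi_k}-u_0^\varphi$ satisfies $(\mathcal{L}-\lambda)z_k=0$ throughout $\Omega$, Caccioppoli's inequality
\[
\|\nabla z_k\|_{L^2(\Omega')}\le C\|z_k\|_{L^2(\Omega_2)}\to 0
\]
upgrades the trace convergence to strong convergence $\nabla u_0^{\varphi_k}\to\nabla u_0^\varphi$ in $L^2(\Omega')^d$, proving compactness of $K$.

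The second ingredient is the $G$-convergence for varying boundary data: whenever $\psi_n\rightharpoonup\psi^*$ in $H^{1/2}(\partial\Omega)$, one has $u_n^{\psi_n}\rightharpoonup u_0^{\psi^*}$ in $H^1(\Omega)$ and $\sigma_n\nabla_{A_n}u_n^{\psi_n}\rightharpoonup\sigma\nabla_A u_0^{\psi^*}$ in $L^2(\Omega)$. This is obtained by writing $u_n^{\psi_n}=E\psi_n+v_n$ using a bounded linear extension $E:H^{1/2}(\partial\Omega)\to H^1(\Omega)$ whose range is supported in a collar of $\partial\Omega$ disjoint from $\Omega'$: since $\mathcal{L}_n$ and $\mathcal{L}$ coincide on $\operatorname{supp}E\psi_n$, the source $(\mathcal{L}_n-\lambda)E\psi_n=(\mathcal{L}-\lambda)E\psi_n\rightharpoonup(\mathcal{L}-\lambda)E\psi^*$ in $H^{-1}$, and the standard $G$-convergence statement applied to the $H_0^1$-residual equation for $v_n$ delivers the claim (both for the solution and for the flux). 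It follows that $J_nH_n\psi_n\rightharpoonup 0$ weakly in $X$, since each "coefficient difference times solution" combination arising in the integrand collapses to a difference of two weak limits that coincide with $(\sigma\nabla_A u_0^{\psi^*},\nabla u_0^{\psi^*},u_0^{\psi^*})$.

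The operator-norm statement then follows by contradiction. If $\|\Lambda_n^\lambda-\Lambda_0^\lambda\|_{H^{1/2}\to H^{-1/2}}\not\to 0$, extract $\psi_n$ on the unit sphere of $H^{1/2}(\partial\Omega)$ with $\|(\Lambda_n^\lambda-\Lambda_0^\lambda)\psi_n\|_{H^{-1/2}}\ge\varepsilon$; along a weakly convergent subsequence $\psi_n\rightharpoonup\psi^*$ the previous step yields $J_nH_n\psi_n\rightharpoonup 0$ in $X$, and compactness of $K^*$ forces $K^*J_nH_n\psi_n\to 0$ strongly in $H^{-1/2}(\partial\Omega)$, contradicting the lower bound. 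Uniformity on a compact set $\Bbb K\subset\C\setminus\operatorname{spec}(\mathcal{L})$ follows from continuous (in fact analytic) dependence of $K(\lambda)$ and of all the estimates on $\lambda\in\Bbb K$. The hardest part will be the adaptation of $G$-convergence to varying boundary data together with the simultaneous convergence of the magnetic fluxes; this is exactly the place where the coincidence of the coefficients near $\partial\Omega$ is indispensable, as it reduces the troublesome source term to one involving only the fixed limit operator $\mathcal{L}$.
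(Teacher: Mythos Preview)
Your approach is genuinely different from the paper's and, in broad outline, sound. The paper works in the \emph{annulus} $\Omega\setminus\Omega'$: the key observation there is that $d_n^\psi=u_n^\psi-u_0^\psi$ solves the \emph{limit} equation $\L^\lambda d_n^\psi=0$ in $\Omega\setminus\Omega'$, so one can factor $\Lambda_n^\lambda-\Lambda_0^\lambda=\mathcal T^\lambda\circ\mathcal A_n^\lambda$ through the closure $L^2_s(\Omega\setminus\Omega')$ of such annular solutions, prove $\mathcal T^\lambda$ compact via Caccioppoli near $\partial\Omega$, and close with a finite-rank approximation of $(\mathcal T^{\bar\lambda})^*$ plus the already-established \emph{strong} convergence $(\Lambda_n^\lambda-\Lambda_0^\lambda)\psi\to 0$ for fixed $\psi$. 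Your route localizes instead to the \emph{interior} $\Omega'$ via an Alessandrini identity, gets compactness from interior Caccioppoli for $u_0^\varphi$, and closes by a subsequence/contradiction argument. Each buys something: the paper's argument never needs flux convergence or $G$-convergence with varying data, only the basic resolvent convergence for a \emph{fixed} source; your argument is conceptually closer to the Alessandrini identity used in stability proofs and makes the compactness mechanism transparent.

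There is, however, a real weak point. Your claim that $J_nH_n\psi_n\rightharpoonup 0$ in $X=L^2(\Omega')\oplus L^2(\Omega')^d$ does not follow from what you state. Expanding the integrand, the scalar component of $J_nH_n\psi_n$ contains terms like $(q_n-q)u_n^{\psi_n}$ and $A_n\cdot\sigma_n\nabla_{A_n}u_n^{\psi_n}$; $G$-convergence of $\L_n$ gives you flux convergence $\sigma_n\nabla_{A_n}u_n^{\psi_n}\rightharpoonup\sigma\nabla_A u_0^{\psi^*}$ and $u_n^{\psi_n}\to u_0^{\psi^*}$ in $L^2$, but \emph{not} weak-$*$ convergence of $q_n$ or $A_n$ separately, so these products need not converge weakly to $0$ against arbitrary test elements of $X$. (Concretely: $(q_n-q)u_0^{\psi^*}\rightharpoonup 0$ would require $q_n\rightharpoonup^* q$, which $G$-convergence does not give.) A related issue: ``$G$-convergence applied to the residual equation for $v_n$'' with a merely \emph{weakly} convergent source $-(\L-\lambda)E\psi_n$ is not the standard statement and fails in general.

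Both issues are repairable without changing your architecture. First, you do not need $J_nH_n\psi_n\rightharpoonup 0$ in $X$; it suffices that $J_nH_n\psi_n$ is \emph{bounded} in $X$ (clear) and that $(\Lambda_n^\lambda-\Lambda_0^\lambda)\psi_n\rightharpoonup 0$ in $H^{-1/2}$: then along a subsequence $J_nH_n\psi_n\rightharpoonup\zeta$ in $X$, compactness of $K^*$ gives $K^*J_nH_n\psi_n\to K^*\zeta$ strongly, and the weak limit forces $K^*\zeta=0$, contradicting $\|K^*J_nH_n\psi_n\|\ge\varepsilon$. Second, to get $(\Lambda_n^\lambda-\Lambda_0^\lambda)\psi_n\rightharpoonup 0$, test with extensions $\tilde\varphi$ supported in the collar (where all coefficients agree) and reduce to $d_n^{\psi_n}\rightharpoonup 0$ in $H_0^1(\Omega)$. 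For this last step, use your own compactness: interior Caccioppoli gives $u_0^{\psi_n}\to u_0^{\psi^*}$ \emph{strongly} in $H^1(\Omega')$, hence $(\L_n-\L)u_0^{\psi_n}=(\L_n-\L)u_0^{\psi^*}+o(1)$ in $H^{-1}$; comparing $d_n^{\psi_n}$ with $d_n^{\psi^*}$ via the uniformly bounded resolvent $(\L_n-\lambda)^{-1}$ then reduces to the fixed-data case $d_n^{\psi^*}\rightharpoonup 0$, which is the basic $G$-convergence. With these adjustments your proof goes through; as written, the ``$J_nH_n\psi_n\rightharpoonup 0$ in $X$'' step is a genuine gap.
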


The proof of the theorem is rather long and will consist of
several steps.  

Let us first note that
there are several equivalent definitions of the $G-$convergence of operators, see e.g. Th. 13.6  and Example 13.13, \cite{D} which essentially
amount to the convergence of the solutions. For our purpose we use

\begin{Def}
The operator $\L _n$  G-converges to $\L$ 
if, for any $f \in H^{-1}(\Omega)$ and $\l< \l(K)$, it holds that 
\begin{equation}\label{resolvent}
(\L_n-\l I)^{-1} f \to (\L-\l I)^{-1} f ,\quad \hbox{as}\,\, n \to \infty,
\end{equation} 
weakly in $H_0^1(\Omega)$.  
\end{Def}

The following two lemmata follow more or less straightforward from the definition of the 
G-convergence. To this end we first introduce the quadratic form, $\ell_n$,
associated with $\L_n$,
\begin{equation} \label{q-form} 
\ell_n [u]= \int_{\Omega}\left(\sigma_n \nabla_{A_n}u, \nabla_{A_n} u \right) +q_n|u|^2,\,\,
u \in H^1_0(\Omega).
\end{equation}

\begin{Lemma} \label{K}
Let now $\l \in \Bbb K$. Then, for any $f \in H^{-1}(\Omega)$, 
\begin{equation}\label{resolvent1}
(\L_n-\l I)^{-1} f \to (\L-\l I)^{-1} f ,\quad \hbox{as}\,\, n \to \infty,
\end{equation}
weakly in $H_0^1(\Omega)$ and uniformly in $\Bbb K$.
\end{Lemma}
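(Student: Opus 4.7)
The definition of $G$-convergence directly gives, for some fixed $\l_0 < \l(K)$, weak-$H_0^1$ convergence of $A_n := (\L_n - \l_0 I)^{-1}$ to $A := (\L - \l_0 I)^{-1}$ pointwise on $H^{-1}$. The plan is to turn this pointwise statement into uniform operator-norm resolvent convergence on $\Bbb K$ and then read off the claim.

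The first, and main, obstacle is to bootstrap the weak-$H_0^1$ convergence to operator-norm convergence $A_n \to A$ in $\mathcal{L}(L^2)$. The compact embedding $H_0^1 \hookrightarrow L^2$ upgrades pointwise weak-$H_0^1$ convergence to pointwise strong $L^2$ convergence, while the uniform bound $\|A_n\|_{L^2 \to H_0^1} \le C$ (from coercivity of $\ell_n - \l_0 \|\cdot\|_{L^2}^2$) renders $\{A_n\}$ collectively compact on $L^2$. I would then run a contradiction argument: if there were $f_n$ with $\|f_n\|_{L^2} \le 1$ and $\|(A_n - A) f_n\|_{L^2} \ge \varepsilon$, pass to a weak subsequential limit $f_n \rightharpoonup f$ in $L^2$, use the dual Rellich embedding $L^2 \hookrightarrow H^{-1}$ to obtain strong convergence $f_n \to f$ in $H^{-1}$, and combine the uniform $\mathcal{L}(H^{-1}, H_0^1)$-bound on $A_n$ with the pointwise weak $H_0^1$ convergence at $f$ to conclude that both $A_n f_n$ and $A f_n$ converge to $A f$ strongly in $L^2$, contradicting the lower bound.

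With $A_n \to A$ in $\mathcal{L}(L^2)$ operator norm in hand, I would invoke the resolvent factorisation
\[
(\L_n - \l I)^{-1} = A_n \bigl( I - (\l - \l_0) A_n \bigr)^{-1},
\]
valid whenever the right-hand inverse exists. The continuity of $\l \mapsto (I - (\l - \l_0) A)^{-1}$ on $\C \setminus \mathrm{spec}(\L)$ and the compactness of $\Bbb K$ yield a uniform $\mathcal{L}(L^2)$-bound on $(I - (\l - \l_0) A)^{-1}$; a routine Neumann-series perturbation then produces invertibility of $I - (\l - \l_0) A_n$ for all $n$ large and all $\l \in \Bbb K$ (so $\Bbb K \cap \mathrm{spec}(\L_n) = \emptyset$ eventually), together with uniform $\mathcal{L}(L^2)$-convergence of the full resolvents on $\Bbb K$. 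Finally, to recover the uniform weak-$H_0^1$ convergence claimed, I would combine the uniform $H_0^1$-bound on the solutions (again from coercivity) with $L^2$-convergence of $(\L_n - \l I)^{-1} f$ — extended from $f \in L^2$ to $f \in H^{-1}$ by the same collectively-compact bootstrap applied in the $H^{-1}$ topology — and test against $\phi \in H^{-1}$ via density of $L^2$; once the first step is secured, these remaining perturbation and density arguments are essentially routine.
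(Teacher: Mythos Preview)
Your argument is correct and is the standard route the paper points to by citing \cite[Lemma~2.7]{GKLU}: coercivity gives uniform $\mathcal{L}(H^{-1}, H_0^1)$ bounds on $A_n$, the compact Sobolev embeddings upgrade the pointwise weak-$H_0^1$ convergence to operator-norm convergence in $\mathcal{L}(L^2)$, and the resolvent factorisation plus Neumann perturbation carries this uniformly over $\Bbb K$. The only small point to watch is that the uniform $H_0^1$ bound on $(\L_n-\l)^{-1}$ for general $\l \in \Bbb K$ does not follow from coercivity alone (that handles only $\l<\l(K)$); you need to feed the already-established uniform $\mathcal{L}(L^2)$ bound back through the resolvent identity $(\L_n-\l)^{-1}=A_n+(\l-\l_0)A_n(\L_n-\l)^{-1}$, but this is indeed routine, as you indicate.
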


\begin{proof}

Using the coercivity of 
$\ell_n$  the proof follows  the lines of \cite[Lemma 2.7]{GKLU}. 
Note that this fact does not require  the coincidence of $\sigma_n, \,A_n$
and $q_n$ with $\sigma, \,A$
and $q$ near $\p \Omega$ since it follows from the uniform ellipticity of 
forms $\ell_n$ together with the strong resolvent convergence of 
(\ref{resolvent}).
\end{proof}

\begin{Lemma}\label{Gconvergencia} 
Let $\L_n$    $G$-converge to $\L$
and $\l \notin \hbox{spec}(\L)$. Then,  for each $\psi \in {\mathcal H}^{1/2}(\partial\Omega)$,
\[  (\Lambda^\lambda_{n} -\Lambda^\l )(\psi) \rightharpoonup  0 \textrm{ in } {\mathcal H}^{-1/2}. \]
\end{Lemma}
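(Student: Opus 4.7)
The goal is to establish pointwise convergence of the duality pairing $\langle \Lambda_n^\lambda \psi, \varphi\rangle \to \langle \Lambda^\lambda \psi, \varphi\rangle$ for every test $\varphi \in H^{1/2}(\partial\Omega)$. The plan is to feed into the weak formulation (\ref{weak formulation}) an extension $\tilde\varphi$ supported in the collar $\Omega\setminus\overline{\Omega'}$ where the coefficients coincide, so that all $n$-dependence is moved into the solutions themselves; weak $H^1$-convergence of $u_n^\psi$, obtained from Lemma \ref{K}, will then let us pass to the limit.

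Working in the setting of Theorem \ref{2.1}, I would first fix a cutoff $\chi \in C_c^\infty(\Omega)$ with $\chi\equiv 1$ in a collar of $\partial\Omega$ and $\chi\equiv 0$ on an open neighborhood $U$ of $\overline{\Omega'}$. For any $H^1$-extensions $\tilde\psi_0,\tilde\varphi_0$ of $\psi,\varphi$, set $\tilde\psi=\chi\tilde\psi_0$ and $\tilde\varphi=\chi\tilde\varphi_0$; these are still $H^1$-extensions, and both they and their gradients vanish on $U$. Since $\sigma_n, A_n, q_n$ agree with $\sigma, A, q$ outside $\Omega'\subset U$, the distribution $f:=-(\L_n-\lambda)\tilde\psi=-(\L-\lambda)\tilde\psi\in H^{-1}(\Omega)$ is \emph{independent} of $n$.

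Writing $u_n^\psi=\tilde\psi+v_n$ with $v_n=(\L_n-\lambda)^{-1}f\in H_0^1(\Omega)$, Lemma \ref{K} yields $v_n\rightharpoonup(\L-\lambda)^{-1}f=u^\psi-\tilde\psi$ weakly in $H_0^1$, so $u_n^\psi\rightharpoonup u^\psi$ weakly in $H^1(\Omega)$ and, by Rellich, strongly in $L^2(\Omega)$. Substituting $\tilde\varphi$ into (\ref{weak formulation}) and using that the integrand vanishes on $U$ while on $\Omega\setminus U$ all coefficients coincide with their $n$-independent limits,
\[
\langle \Lambda_n^\lambda\psi,\varphi\rangle = \int_\Omega \sigma\,\nabla_{A} u_n^\psi\cdot\overline{\nabla_{A}\tilde\varphi}+(q-\lambda)\,u_n^\psi\,\overline{\tilde\varphi}.
\]
Expanding $\nabla_{A}=\nabla+iA$, every resulting term pairs $\nabla u_n^\psi$ (weak $L^2$) or $u_n^\psi$ (strong $L^2$) against a fixed $L^2$ function built from $\sigma, A, \tilde\varphi$; each converges, and the limit equals $\langle\Lambda^\lambda\psi,\varphi\rangle$ by the weak formulation for $\L$.

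The conceptual obstacle avoided here is that $G$-convergence provides no $L^p$-control on $\sigma_n, A_n, q_n$, so limits of bilinear forms containing both these oscillating coefficients and weakly convergent gradients are generally inaccessible. Concentrating the test extension $\tilde\varphi$ in the region of coefficient agreement excises this difficulty, reducing the whole question to routine weak-strong pairings in fixed coefficient spaces.
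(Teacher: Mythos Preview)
Your argument is correct, but it proves a strictly weaker statement than the lemma claims. The lemma as stated (and as the paper intends it---note the preceding sentence ``The following two lemmata follow more or less straightforward from the definition of the $G$-convergence'') does \emph{not} assume that the coefficients agree near $\partial\Omega$; you explicitly invoke the setting of Theorem~\ref{2.1} to arrange that $f=-(\L_n-\lambda)\tilde\psi$ is independent of $n$ and that the integrand in the weak formulation involves only the limit coefficients. Without that hypothesis the source $f$ depends on $n$ and the bilinear form pairs oscillating $\sigma_n,A_n,q_n$ against weakly convergent $\nabla u_n^\psi$, exactly the obstacle you identify in your last paragraph. The general version is genuinely needed in the paper: Theorem~\ref{stability} cites Lemma~\ref{Gconvergencia} in a situation where no near-boundary coincidence of coefficients is available.

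The paper's proof avoids this by appealing to a structural fact from $\Gamma$/$G$-convergence theory (Theorem~22.9 in \cite{D}): $G$-convergence implies convergence of the energies $\ell_n[u_n]-\lambda\|u_n\|^2\to\ell[u]-\lambda\|u\|^2$ for the corresponding Dirichlet solutions, and polarization then upgrades this to convergence of the bilinear pairings $\langle\Lambda_n^\lambda\psi,\varphi\rangle$. This route never requires isolating a region where the coefficients are frozen. Your argument, on the other hand, is essentially the content of Proposition~\ref{key2.a} (and the paper indeed reuses it there), so it is not wasted---it just does not establish the lemma in the generality stated.
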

\begin{proof} Denoting by $u_n,\,u$ the solutions to (\ref{condmagn}), we
have, by Theorem 22.9 \cite{D}, that
$$
\ell_n[u_n]-\l \|u_n\|^2 \to \ell[u]-\l \|u\|^2, 
\quad \hbox{as}\,\, n \to \infty.
$$
Polarising this equality, we arrive at 
$$
\langle \Lambda_n^\lambda(\psi),\varphi \rangle \to 
\langle \Lambda^\lambda(\psi),\varphi \rangle.
$$
\end{proof}

  We denote by $ \mathring{H}^{1}(\Omega\setminus\Omega')$ the functions in $H^{1}(\Omega\setminus\Omega')$ with trace $0$ on $\partial \Omega$.
A key fact in our arguments is  the following Caccioppoli  type inequality. 

\begin{Lemma}\label{Caccioppoli} Let $w\in \mathring{H}^{1}(\Omega\setminus\Omega')$ be  a weak solution  of  
\begin{equation}\label{3.1.03}
\L^\lambda w=f +\div F \textrm{ on }\Omega\setminus\Omega',
\end{equation}
  for $f\in L^2( \Omega)$ and  $F$ being 
  a vector field in $L^2(\Omega\setminus \Omega' )$. Then, for 
  any $\Omega''$, 
  ${\overline \Omega'} \Subset \Omega'',\, {\overline \Omega''} \Subset \Omega$, there exists  a $C=C(\Omega, \Omega',\Omega'', K, \lambda)$ 
  such that 
\begin{equation}\label{equationring}
\int_{\Omega\setminus \Omega''}|\nabla w|^2 
\leq C \left(\int_{\Omega\setminus \Omega'}|w|^2+
\int_{\Omega\setminus \Omega'}|F|^2 +
\int_{\Omega\setminus \Omega'}|f|^2 \right).
 \end{equation}
 
 Moreover if we choose $\Omega'=\Omega_{2\delta}$ and $ \Omega''=\Omega_\delta$ the estimate is  
 \begin{equation}\label{equationring2}
\int_{\Omega_\delta}|\nabla w|^2 
\leq C \left(\delta^{-2} \int_{\Omega_{2\delta}}|w|^2+
\int_{\Omega_{2\delta}}|F|^2 +
\int_{\Omega_{2\delta}}|f|^2 \right).
 \end{equation}

\end{Lemma}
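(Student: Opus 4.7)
The proof is a Caccioppoli-type energy estimate adapted to the geometry where only the outer boundary $\partial\Omega$ carries an a priori zero-trace condition. First I construct a smooth cutoff $\eta$ on $\overline{\Omega}$ with $\eta\equiv 1$ on $\Omega\setminus \Omega''$, $\eta\equiv 0$ in an open neighbourhood of $\overline{\Omega'}$, $0\le \eta\le 1$, and $|\nabla \eta|\le C(\Omega', \Omega'')$; such $\eta$ exists because $\overline{\Omega'}\Subset \Omega''$. The function $\varphi:=\eta^{2}w$ then lies in $H^{1}_{0}(\Omega\setminus \Omega')$: its trace on $\partial\Omega$ vanishes because $w\in \mathring{H}^{1}(\Omega\setminus \Omega')$, and its trace on $\partial\Omega'$ vanishes because $\eta$ does. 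In particular $\varphi$ is a legitimate test function for the weak form of (\ref{3.1.03}) on the domain $\Omega\setminus \Omega'$ where the equation is assumed to hold.

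Inserting $\varphi$ in the weak formulation of $\L^{\lambda}w=f+\div F$ and expanding $\nabla_{A}\varphi = 2\eta\, w\,\nabla\eta + \eta^{2}\nabla_{A}w$ produces the identity
$$\int \eta^{2}\,\sigma\nabla_{A}w\cdot \overline{\nabla_{A}w} \;=\; -2\int \eta \bar w\,\sigma\nabla_{A}w\cdot \nabla\eta \;+\; \int (\lambda-q)\,\eta^{2}|w|^{2} \;+\; \int f\,\eta^{2}\bar w \;-\; \int F\cdot \overline{\nabla\varphi}.$$
The ellipticity of $\sigma$ bounds the left-hand side from below by $K^{-1}\int \eta^{2}|\nabla_{A}w|^{2}$. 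Each term on the right is then controlled by Cauchy--Schwarz with a small parameter $\varepsilon>0$; for the $F$-term I use $|\nabla\varphi|\le 2\eta|\nabla_{A}w|+2\eta|A||w|+2|\nabla\eta||w|$, converting ordinary gradients to magnetic gradients via the uniform bound $\|A\|_{\infty}\le K$.

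Choosing $\varepsilon$ small enough to absorb all the $\eta^{2}|\nabla_{A}w|^{2}$ contributions into the left-hand side yields
$$\int \eta^{2}|\nabla_{A}w|^{2} \;\le\; C\int \bigl(1+|\nabla\eta|^{2}\bigr)|w|^{2} \;+\; C\int \eta^{2}\bigl(|f|^{2}+|F|^{2}\bigr),$$
with $C=C(K,\lambda)$. Since $\eta\equiv 1$ on $\Omega\setminus \Omega''$, since $\eta$ is supported in the region $\Omega\setminus \overline{\Omega'}$ where the equation is valid, and since $|\nabla w|^{2}\le 2|\nabla_{A}w|^{2}+2K^{2}|w|^{2}$, inequality (\ref{equationring}) follows at once. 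For (\ref{equationring2}) the same computation is run with $\Omega'$ and $\Omega''$ chosen so that $\Omega\setminus \Omega'$ is the $2\delta$-collar $\Omega_{2\delta}$ and $\Omega\setminus \Omega''$ is the $\delta$-collar $\Omega_{\delta}$; then $\eta$ can be taken with $|\nabla\eta|\le C/\delta$, which produces the explicit $\delta^{-2}$ coefficient on the $|w|^{2}$ term.

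The only slightly delicate point is the treatment of $\int F\cdot \overline{\nabla\varphi}$: because $F$ is paired with $\nabla\varphi$ rather than $\nabla_{A}\varphi$, one cannot directly exploit ellipticity, and must carefully convert $\nabla$ to $\nabla_{A}$ with the bound on $|A|$, arranging the Cauchy--Schwarz parameters so that the resulting $|\nabla_{A}w|^{2}$ terms are still absorbable. Everything else is standard bookkeeping, and uniformity of $C$ in $\lambda$ on compact subsets of $\C$ is immediate from the explicit form $C=C(K,\lambda)$.
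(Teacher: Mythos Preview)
Your argument is correct and follows essentially the same route as the paper: choose a cutoff $\eta$ that equals $1$ on $\Omega\setminus\Omega''$ and vanishes near $\partial\Omega'$, test the equation against $\eta^{2}w\in H^{1}_{0}(\Omega\setminus\Omega')$, expand $\nabla_{A}(\eta^{2}w)=2\eta w\nabla\eta+\eta^{2}\nabla_{A}w$, and absorb the cross terms and the $F$--term (after converting $\nabla$ to $\nabla_{A}$ via the bound on $A$) using Cauchy--Schwarz with a small parameter; the $\delta$--dependent version then follows by taking $\eta=\eta(d(x,\partial\Omega)/\delta)$ so that $|\nabla\eta|\le C/\delta$. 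The paper organizes the same estimates by naming the two main right-hand terms $I_{1},I_{2}$ and tracking the explicit absorption constants, but the substance is identical.
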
 
\begin{proof} 

 Choose  $\eta \in {C}^\infty(\mathbb{R}^n \setminus \Omega')$ such that $ \eta=1$ on 
 $\Omega\setminus \Omega''$ and $\eta=0$ near $\p \Omega'$. 
 Since  $w$ has zero trace on 
 $\partial  \Omega$,  $\eta^2 w \in H_0^{1}(\Omega \setminus \Omega')$. Thus it is a proper 
 test function for  the weak formulation of (\ref{3.1.03}). Thus, 
\[ \int_{\Omega \setminus \Omega'} \langle \sigma \nabla_A w, \nabla_A (\eta^2w)\rangle  =\int_{\Omega \setminus \Omega'} \langle F, \nabla(\eta^2 w)\rangle  -\int_{\Omega \setminus \Omega'} \eta^2(q-\lambda) |w|^2  +\int_{\Omega \setminus \Omega'}f  \overline {\eta^2w}\]
Hence 
$$ \big|\int_{\Omega\setminus \Omega'}\eta^2\langle \sigma \nabla_A w, \nabla_A w\rangle \big|$$
$$ \leq \underbrace{\big|2\int_{\Omega\setminus \Omega'}\eta \langle \sigma \nabla_A w, \nabla \eta\rangle  w \big| }_{=I_1}
+\underbrace{\big|\int_{\Omega\setminus \Omega'} \langle F, \nabla (\eta^2 w)\rangle \big|}_{=I_2} + C\int_{\Omega\setminus \Omega'}|w|^2 +\int_{\Omega\setminus \Omega'}| f \eta^2 w |$$
We can bound the  first term on the right hand side  by  
$$
I_1 \leq \big|2\int_{\Omega\setminus \Omega'}\eta \langle \sigma \nabla_A w,
 \nabla_A w\rangle^{1/2} \langle \sigma \nabla \eta, \nabla \eta\rangle^{1/2} |w| \big|
$$
$$\leq  2K\|\nabla \eta\|_{L^\infty}\left(\int_{\Omega\setminus \Omega'}\ \eta^2 \langle \sigma \nabla_A w, \nabla_A w\rangle\right)^{1/2}\left(\int_{\Omega\setminus \Omega'}\ |w|^2\right)^{1/2} $$
$$\leq  1/2 \left(\int_{\Omega\setminus \Omega'}\ \eta^2 \langle \sigma \nabla_A w, \nabla_A w\rangle\right) + 16K^2\|\nabla \eta\|_{L^\infty}^2 \int_{\Omega\setminus \Omega'}\ |w|^2,$$
where we have used that $\sigma \le KI$.
Hence we absorb the term $1/2 \left(\int \eta^2 \langle \sigma \nabla_A w, \nabla_A w\rangle\right)$ by the left hand side to obtain
\begin{equation}\label{homogeneousterm}
\begin{split}  |\int_{\Omega\setminus \Omega'}\eta^2\langle \sigma \nabla_A w, \nabla _A w\rangle | & \leq C \|\nabla \eta\|_{L^\infty}^2\int_{\Omega\setminus \Omega'}\ |w|^2 \\ &+2|\int_{\Omega\setminus \Omega'}\ \langle F , \nabla (\eta^2 w)\rangle| 
 +C\int_{\Omega\setminus \Omega'} |fw|.
\end{split}
\end{equation}
Now we deal with the term $I_2=|\int_{\Omega\setminus \Omega'}\ \langle F, \nabla (\eta^2 w)\rangle|.$  By integrating by parts
and the definition of $\nabla_A$ we have that,
$$I_2   \leq|\int_{\Omega\setminus \Omega'} \langle F,2\eta \nabla \eta\rangle w|+|\int_{\Omega\setminus \Omega'} \eta^2\langle F, \nabla_A  w\rangle |+\int_{\Omega\setminus \Omega'} |\langle F,\eta^2 iA   w\rangle|. $$
Next, we use the  Cauchy-Schwartz inequality for the first  and the third terms of the right and the H\"older inequality for the second, 
in order to bound $I_2$ by  
$$\leq   C(\int_{\Omega\setminus \Omega'} | F \eta|^2 +\int_{\Omega\setminus \Omega'} |\nabla \eta|^2 |w|^2 )
 +   \int_{\Omega\setminus \Omega'} \eta^2 | F|| \nabla_A w| + 
 \int_{\Omega\setminus \Omega'}  |F|^2\eta^2 + 
 \int_{\Omega\setminus \Omega'} |Aw|^2$$
$$
\leq C  \int_{\Omega\setminus \Omega'} \eta^2 |  F |^2 +C \|\nabla \eta\|_{L^\infty}^2 \int_{\Omega\setminus \Omega'}|w|^2 +
(\int_{\Omega\setminus \Omega'} \eta^2 | F |^2)^{1/2}
(\int_{\Omega\setminus \Omega'} \eta^2 | \nabla_A w |^2)^{1/2}.$$
Since,
$$(\int_{\Omega\setminus \Omega'} \eta^2 | F|^2)^{1/2}
(\int_{\Omega\setminus \Omega'} \eta^2 | \nabla_A w |^2)^{1/2}\leq  
C\int_{\Omega\setminus \Omega'} \eta^2 | F|^2 +
\frac 1 {2K}\int_{\Omega\setminus \Omega'} \eta^2 | \nabla_A w |^2,$$
 we have obtained the bound 
\begin{equation}\label{fterm}\begin{split}
I_2 \le   C\int_{\Omega\setminus \Omega'} \eta^2 | F |^2 +
\frac 1{2K}\int_{\Omega\setminus \Omega'} \eta^2 | \nabla_A w |^2  +C\|\nabla \eta\|_{L^\infty}^2\int_{\Omega\setminus \Omega'}|w|^2.
\end{split}
\end{equation}

Now we incorporate estimate (\ref{fterm}) into (\ref{homogeneousterm}) estimating the  term  $\int_{\Omega\setminus \Omega'}| f \eta^2 w |$ by 
Cauchy-Schwarz. We obtain that 
$$
|\int_{\Omega\setminus \Omega'}\eta^2\langle \sigma \nabla_A w, \nabla _A w\rangle |$$
$$\leq C\int_{\Omega\setminus \Omega'} \eta^2 | F |^2 +\frac 1{2K}\int_{\Omega\setminus \Omega'} \eta^2 | \nabla_A w |^2  +C\|\nabla \eta\|_{L^\infty}^2\int |w|^2 +\int |f|^2.$$
Since $\frac{1}{K} I \le \sigma $, 
$$ \frac1 K \int_{\Omega\setminus \Omega'} |\eta\nabla_A w|^2  \leq |\int_{\Omega\setminus \Omega'}\eta^2\langle \sigma \nabla_Aw, \nabla w\rangle |$$ 
and thus  we can absorb the term $\frac 1{2K}\int_{\Omega\setminus \Omega'} \eta^2 | \nabla_A w |^2$  to the left hand side to obtain the bound 

\begin{equation} \label{SuperCacciopoli}\begin{aligned}
\int_{\Omega\setminus \Omega'} |\eta\nabla_A w|^2 & \leq
C\left(\int_{\Omega\setminus \Omega'}   | F|^2 + \|\nabla \eta\|_{L^\infty}^2
\int_{\Omega\setminus \Omega'}  |w|^2  +
\int_{\Omega\setminus \Omega'} |f|^2 \right), \end{aligned} \end{equation}
where $C$ depends on $(K,A,q)$ but not of $\Omega',\Omega''$.

In order to obtain (\ref{equationring}) we simply  expand $\nabla_A w$ and observe that  $\eta=1$ on $\Omega \setminus \Omega''$.

In order to obtain (\ref{equationring2}) we define the cut-off more carefully. Let 
$\eta_\delta \in C^{0,1} (\Omega)$ be defined by   

\begin{equation}\label{etadelta}
\eta_\delta(x)= \eta(d(x, \p \Omega)/\delta),
\end{equation}
 where  $\eta(s) \in C^\infty_0(\R_+),\, \eta(s)=1$ for $s <1,\, \eta(s)=0$
 for $s>2$.
Observe that
$\supp(\eta_\delta) \subset \Omega_{2\delta}$  and  
\begin{equation}\label{delta}
  \|\eta_\delta \|_{C^{0,1}(\Omega)}  \le C \delta^{-1}.
\end{equation}

Plugging $\eta_\delta$ into (\ref{SuperCacciopoli}) yields (\ref{equationring2}).
\end{proof}

Let us fix a boundary value 
$\psi \in \H(\partial \Omega)$ and, as in the beginning of this section, 
denote the corresponding solutions   to (\ref{condmagn})  by $u_n^\psi$, 
with   $ u^\psi$ being the solution to (\ref{condmagn}) for $\L^\l$. 
 It will be convenient for us 
to work with the difference 
$$d^\psi_n(\lambda)=d^\psi_n =u^\psi_n-u^\psi  \in H^{1}_0(\Omega).$$
Due to (\ref{1.1.03}) and (\ref{2.1.03}), it follows that 
\[ \|d^\psi_n\|_{H^1(\Omega)} \le \|u^\psi_n\|_{H^1(\Omega)}+\|u^\psi\|_{H^1(\Omega)} \le 
C\|\psi\|_{\H(\partial \Omega)} \]
It is convenient  to state the above inequality as a separate lemma.

 \begin{Lemma} \label{D} 
Let
\begin{equation}\label{A}
{\mathcal A}^\lambda_n(\psi)={ d_n^\psi}_{|_ {\Omega\setminus\Omega'}}, \quad
{\mathcal A}^\lambda_n: H^{1/2}(\partial\Omega) \to 
\mathring {H}^{1} (\Omega\setminus\Omega').
\end{equation}
 Then these  operators  are uniformly bounded  wrt $n$ and $\l \in \Bbb K$.
 \end{Lemma}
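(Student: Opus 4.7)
The plan is to reduce the statement to the uniform a priori estimate $\|u_n^\psi\|_{H^1(\Omega)} \le C(K,\Bbb K)\|\psi\|_{H^{1/2}(\p\Omega)}$ already indicated in the paragraph preceding the lemma, with the key point being the joint uniformity in $n$ and $\l \in \Bbb K$. The only topological observation needed on top of this is that since $u_n^\psi$ and $u^\psi$ share the boundary trace $\psi$, their difference $d_n^\psi$ lies in $H^1_0(\Omega)$; hence $d_n^\psi|_{\Omega \setminus \Omega'}$ has vanishing trace on $\p\Omega$ and belongs to $\mathring H^1(\Omega \setminus \Omega')$ with
\[ \|{\mathcal A}_n^\l \psi\|_{\mathring H^1(\Omega \setminus \Omega')} \le \|d_n^\psi\|_{H^1(\Omega)} \le \|u_n^\psi\|_{H^1(\Omega)} + \|u^\psi\|_{H^1(\Omega)}. \]

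To prove the remaining a priori estimate I would lift $\psi$ to an extension $\tilde\psi \in H^1(\Omega)$ with $\|\tilde\psi\|_{H^1(\Omega)} \le C\|\psi\|_{H^{1/2}(\p\Omega)}$, write $u_n^\psi = \tilde\psi + v_n$ with $v_n \in H^1_0(\Omega)$, and observe that $v_n$ solves $(\L_n - \l)v_n = f_n$, where $f_n := -(\L_n - \l)\tilde\psi \in H^{-1}(\Omega)$ satisfies $\|f_n\|_{H^{-1}(\Omega)} \le C(K,\Bbb K)\|\tilde\psi\|_{H^1(\Omega)}$ by (\ref{1.1.03})--(\ref{2.1.03}). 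Combining Lemma~\ref{K} with the uniform boundedness principle then yields
\[ \sup_{n,\,\l \in \Bbb K}\|(\L_n - \l)^{-1}\|_{H^{-1}(\Omega)\to H^1_0(\Omega)} < \infty, \]
whence $\|v_n\|_{H^1_0(\Omega)} \le C\|\psi\|_{H^{1/2}(\p\Omega)}$ uniformly, and the same bound for $u^\psi$ is obtained by taking a subsequential $G$-limit (or by coercivity applied directly to $\L$).

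The main obstacle in this plan is the uniformity of the resolvent bound over the compact parameter set $\Bbb K$, since Lemma~\ref{K} only directly delivers pointwise-in-$\l$ weak convergence of resolvents. To upgrade this to a bound uniform over $\Bbb K$, I would first fix $\l_0 < \l(K)$ at which coercivity of the quadratic form $\ell_n - \l_0\|\cdot\|_{L^2}^2$ (with constant depending only on $K$) gives a uniform resolvent estimate at $\l_0$; and then transfer to general $\l \in \Bbb K$ via the resolvent identity
\[ (\L_n - \l)^{-1} = \bigl(I - (\l - \l_0)(\L_n - \l_0)^{-1}\bigr)^{-1} (\L_n - \l_0)^{-1}, \]
combined with the compact embedding $H^1_0(\Omega) \hookrightarrow L^2(\Omega)$, analytic Fredholm theory, and the compactness of $\Bbb K$ inside the resolvent set of $\L$. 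Everything else in the argument amounts to bookkeeping.
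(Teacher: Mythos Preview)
Your proposal is correct and follows the same route as the paper, whose entire proof is the one-line inequality displayed immediately before the lemma; you have simply supplied the details behind the a priori bound $\|u_n^\psi\|_{H^1}\le C\|\psi\|_{H^{1/2}}$ that the paper asserts without argument from (\ref{1.1.03})--(\ref{2.1.03}). Note that Lemma~\ref{K} already states the resolvent convergence to be uniform over $\l\in\Bbb K$, so the obstacle you flag in your final paragraph is not actually present and the uniform boundedness principle already suffices---your analytic Fredholm argument, while correct, is not needed.
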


 We prove now  the strong convergence of the Dirichlet to Neumann  maps.
\begin{Prop}  \label{key2.a} Let $\L_n,\, \L$ and $\Bbb K$ satisfy conditions
of Theorem \ref{2.1}.
Then, for any $\psi \in \H(\p \Omega)$,
\[ \lim_{n \to \infty} \|(\Lambda^\lambda _n -
\Lambda^\lambda  ) (\psi) \|_{H^{ -1/2}}=0 ,  \]
 the convergence being uniform for $\lambda \in \Bbb K$.
\end{Prop}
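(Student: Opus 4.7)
The argument rests on three ingredients: (i) the difference $d_n^\psi := u_n^\psi - u^\psi$ is a zero--trace weak solution of the limit equation $\L^\lambda w = 0$ on $\Omega \setminus \Omega'$, because the coefficients $\sigma_n, A_n, q_n$ agree with $\sigma, A, q$ there, so $\L^\lambda u_n^\psi = \L_n^\lambda u_n^\psi = 0$ on that set and $\L^\lambda u^\psi = 0$ everywhere; (ii) the G-convergence, combined with Rellich's theorem, yields $d_n^\psi \to 0$ strongly in $L^2(\Omega)$; and (iii) Lemma~\ref{Caccioppoli} then upgrades this to strong $H^1$-convergence on a shell near $\p\Omega$. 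The key trick is to pair $\Lambda_n^\lambda(\psi) - \Lambda^\lambda(\psi)$ against $\varphi$ using only extensions that are supported where all coefficients agree.

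\emph{Step 1 (strong $L^2$-convergence of $d_n^\psi$).} Fix $\Omega''$ with $\overline{\Omega'} \subset\subset \Omega'' \subset\subset \Omega$. Pick any extension of $\psi$ and multiply it by a cutoff equal to $1$ in a neighborhood of $\p\Omega$ and supported in $\Omega \setminus \Omega''$; call the result $\tilde\psi$. Then $\supp \tilde\psi \subset \Omega \setminus \Omega''$ and $\|\tilde\psi\|_{H^1} \leq C\|\psi\|_{\H(\p\Omega)}$. Because all coefficients agree on $\supp \tilde\psi$, the distribution $h := -(\L - \lambda)\tilde\psi \in H^{-1}(\Omega)$ equals $-(\L_n - \lambda)\tilde\psi$ and is independent of $n$. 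Consequently $v_n := u_n^\psi - \tilde\psi = (\L_n - \lambda)^{-1} h$ and $v := u^\psi - \tilde\psi = (\L - \lambda)^{-1} h$, so Lemma~\ref{K} gives $v_n \rightharpoonup v$ weakly in $H^1_0(\Omega)$ uniformly on $\Bbb K$. Hence $d_n^\psi = v_n - v \rightharpoonup 0$ weakly in $H^1_0(\Omega)$, and Rellich--Kondrachov yields $\|d_n^\psi\|_{L^2(\Omega)} \to 0$.

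\emph{Step 2 (localized representation and Caccioppoli).} For a given $\varphi \in \H(\p\Omega)$, construct $\tilde\varphi$ in the same way, so that $\supp \tilde\varphi \subset \Omega \setminus \Omega''$ and $\|\tilde\varphi\|_{H^1} \leq C\|\varphi\|_{\H(\p\Omega)}$. Since $\sigma_n = \sigma$, $A_n = A$, $q_n = q$ on $\supp \tilde\varphi$, the weak formulations of both D-N maps reduce to integrals over $\Omega \setminus \Omega''$ with the same coefficients, giving
\[
\langle (\Lambda_n^\lambda - \Lambda^\lambda)(\psi), \varphi\rangle = \int_{\Omega \setminus \Omega''} \bigl(\sigma \nabla_A d_n^\psi \cdot \overline{\nabla_A \tilde\varphi} + (q - \lambda) d_n^\psi\, \overline{\tilde\varphi}\bigr),
\]
which Cauchy--Schwarz bounds by $C \|d_n^\psi\|_{H^1(\Omega \setminus \Omega'')} \|\varphi\|_{\H(\p\Omega)}$. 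Applying Lemma~\ref{Caccioppoli} to $d_n^\psi$ with $f = 0$ and $F = 0$ (legitimate by (i), since $d_n^\psi|_{\Omega \setminus \Omega'} \in \mathring{H}^1(\Omega \setminus \Omega')$) yields
\[
\|d_n^\psi\|_{H^1(\Omega \setminus \Omega'')} \leq C\|d_n^\psi\|_{L^2(\Omega \setminus \Omega')} \leq C\|d_n^\psi\|_{L^2(\Omega)} \longrightarrow 0.
\]

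\emph{Step 3 (conclusion and uniformity).} Taking the supremum over unit-norm $\varphi$ gives $\|(\Lambda_n^\lambda - \Lambda^\lambda)(\psi)\|_{H^{-1/2}(\p\Omega)} \to 0$. Uniformity for $\lambda \in \Bbb K$ is inherited from Lemma~\ref{K}: the weak convergence $v_n \rightharpoonup v$ is uniform on $\Bbb K$, and a standard compactness argument (using the uniform $H^1_0$-bound together with the compact embedding $H^1_0 \hookrightarrow L^2$ and the compactness of $\Bbb K$) upgrades it to uniform strong $L^2$-convergence; the constants in Lemma~\ref{Caccioppoli} and in Cauchy--Schwarz are uniform on $\Bbb K$. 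The main obstacle is evading the weakness of G-convergence: this is resolved by localizing both the lift $\tilde\psi$ and the test extension $\tilde\varphi$ into the region $\Omega \setminus \Omega''$ where all coefficients coincide, so the only $n$-dependence left in the bilinear form acts on $d_n^\psi$, which Caccioppoli controls by its $L^2$-norm.
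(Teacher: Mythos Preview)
Your proof is correct and follows essentially the same route as the paper's: express $u_n^\psi$ and $u^\psi$ via the resolvents applied to a common right-hand side (made possible by localizing the extension $\tilde\psi$ where the coefficients agree), invoke the $G$-convergence (Lemma~\ref{K}) together with Rellich to get $d_n^\psi \to 0$ in $L^2$, then use Caccioppoli on the homogeneous equation (\ref{1.3.03}) to upgrade to $H^1$-convergence on the outer shell, and finally evaluate the weak D-N pairing against test extensions $\tilde\varphi$ supported in that shell. The only cosmetic difference is that you support $\tilde\psi$ in $\Omega\setminus\Omega''$ while the paper takes $\tilde\psi=0$ in $\Omega'$; both choices produce an $n$-independent right-hand side, which is the only thing that matters.
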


\begin{proof} Let us fix a boundary value 
$\psi \in \H(\partial \Omega)$ and define $u_n^\psi, u^\psi,
d_n^\psi={\mathcal A}^\l_n \psi$ as above. Observe that, with 
${\tilde \psi} \in H^1(\Omega),\, 
{\tilde \psi} = 0$ in $\Omega'$, $\tilde \psi|_{\p \Omega}=\psi$, 
$$
u_n^\psi={\tilde \psi}+ (\L_n- \l I)^{-1} F,\quad 
u^\psi={\tilde \psi}+ (\L- \l I)^{-1} F,
$$
where 
$$
F=\nabla_A \cdot \sigma \nabla_A {\tilde \psi}- (q-\l) {\tilde \psi} \in
H^{-1}(\Omega),\quad \hbox{supp}(F) \subset \Omega \setminus \Omega'.
$$
Then it follows from G-convergence that 
that  
\[ d_n^\psi \rightarrow 0,  \]
where convergence is weak in ${\mathring{H}}^{1}(\Omega\setminus \Omega')$ and strong in $L^2(\Omega\setminus \Omega')$. 
(A direct proof under condition~\ref{sharp_0}   is given in Lemma~\ref{lem:d_n}). We continue by applying  Caccioppoli inequality
(\ref{Caccioppoli}) and taking into the account that
\begin{equation} \label{1.3.03}
\nabla_A \cdot \sigma \nabla_A d_n^\psi- (q-\l) d_n^\psi =0 \quad \hbox{in}\,\,
\Omega \setminus \Omega',
\end{equation}
we see that 
$$
d_n^\psi|_{\Omega \setminus \Omega''} \to 0 \quad \hbox{in}\,\,
{\mathring{H}}^1(\Omega \setminus \Omega'').
$$
This implies the desired result taking into the account the weak
definition of the Dirichlet-to-Neumann map (\ref{weak formulation})
and the ability to take $\tilde \phi$ there so that
$
 {\tilde \phi}=0$ in $  \Omega''$  and
$$\|{\tilde \phi}\|_{H^1(\Omega \setminus \Omega'')} \leq C
\|{\phi}\|_{\H(\p\Omega)}.
$$
\end{proof}

In order to utilize that   the functions $d_n^\psi$ satisfy (\ref{1.3.03}), we introduce the following subspace:

\begin{Def} \label{L_s}
We denote    $L^2_s(\Omega\setminus\Omega')$  to be the  $L^2(\Omega\setminus\Omega')$-closure  of the  set 
$\{u:\, u\in  {\mathring {H}}_{loc}^{1}(\Omega\setminus\Omega') \text 
{ and }\L^\lambda  u=0 \,\, \text{in}\, \Omega\setminus\Omega'\}$.
\end{Def}

 \begin{Lemma}\label{solutions}
Let  $v\in L^2_s(\Omega\setminus\Omega')$. Then 
$v \in \mathring {H}_{loc}^{1} (\Omega\setminus\Omega')$ and  is  a solution 
in $\Omega\setminus \Omega'$ of equation (\ref{1.3.03}).
\end{Lemma}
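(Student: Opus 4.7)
The plan is a straightforward density argument combining the Caccioppoli inequality (Lemma~\ref{Caccioppoli}) with continuity of the trace and the weak formulation. Take $v \in L^2_s(\Omega\setminus\Omega')$ and, by definition, pick a sequence $v_k \in \mathring{H}^1_{loc}(\Omega\setminus\Omega')$ with $\L^\lambda v_k = 0$ in $\Omega\setminus\Omega'$ and $v_k \to v$ in $L^2(\Omega\setminus\Omega')$. I would first fix an arbitrary intermediate domain $\Omega''$ with $\overline{\Omega'}\Subset \Omega''\Subset \Omega$ and apply Lemma~\ref{Caccioppoli} to the difference $w_{k,j} := v_k - v_j$, which solves $\L^\lambda w_{k,j}=0$ in $\Omega\setminus\Omega'$ and has zero trace on $\partial\Omega$. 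Taking $f=0$ and $F=0$ in (\ref{equationring}) gives
\[
\int_{\Omega\setminus\Omega''}|\nabla(v_k-v_j)|^2 \leq C \int_{\Omega\setminus\Omega'}|v_k-v_j|^2 \longrightarrow 0,
\]
so $\{v_k\}$ is Cauchy in $H^1(\Omega\setminus\Omega'')$ and its limit, which must coincide with $v$ by uniqueness of $L^2$ limits, belongs to $H^1(\Omega\setminus\Omega'')$. Since this holds for every such $\Omega''$, we obtain $v \in H^1_{loc}(\Omega\setminus\Omega')$.

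Next I would verify the boundary condition on $\partial\Omega$. The trace operator from $H^1(\Omega\setminus\Omega'')$ to $H^{1/2}(\partial\Omega)$ is continuous, the traces of all $v_k$ vanish on $\partial\Omega$, and $v_k \to v$ in $H^1(\Omega\setminus\Omega'')$; hence $v|_{\partial\Omega}=0$ and therefore $v \in \mathring{H}^1_{loc}(\Omega\setminus\Omega')$.

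Finally, to check that $\L^\lambda v=0$ distributionally in $\Omega\setminus\Omega'$, I would test against arbitrary $\varphi \in C^\infty_c(\Omega\setminus\overline{\Omega'})$. Since $\mathrm{supp}(\varphi)$ is compact and disjoint from $\overline{\Omega'}$, there exists $\Omega''$ with $\overline{\Omega'}\Subset \Omega''\Subset \Omega$ such that $\mathrm{supp}(\varphi)\subset \Omega\setminus\Omega''$. The weak formulation of $\L^\lambda v_k=0$ reads
\[
\int_{\Omega\setminus\Omega''}\bigl(\sigma \nabla_A v_k\cdot \overline{\nabla_A \varphi} + (q-\lambda) v_k \overline\varphi\bigr)=0,
\]
and since $v_k \to v$ strongly in $H^1(\Omega\setminus\Omega'')$, all terms pass to the limit and yield the same identity for $v$. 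This shows $\L^\lambda v =0$ in $\Omega\setminus\Omega'$ in the distributional (equivalently weak) sense.

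The only genuinely delicate point is the bookkeeping of the cut-off domains: the Caccioppoli estimate controls $\|\nabla w\|_{L^2(\Omega\setminus\Omega'')}$ only for $\Omega''$ strictly surrounding $\Omega'$, so $H^1$-regularity of $v$ is inherently local away from $\partial\Omega'$. This is exactly consistent with the definition of $\mathring{H}^1_{loc}(\Omega\setminus\Omega')$, so no further work is needed, and the proof is essentially a two-line consequence of the Caccioppoli inequality established above.
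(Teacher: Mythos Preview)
Your proof is correct and follows essentially the same route as the paper: apply the Caccioppoli inequality (Lemma~\ref{Caccioppoli}) to the differences $v_k-v_j$ to upgrade $L^2$-convergence to $\mathring{H}^1$-convergence on each $\Omega\setminus\Omega''$, then pass the equation to the limit. The paper's version is terser---it simply says ``a strong limit of solutions is a weak solution''---whereas you spell out the trace argument and the test against $\varphi\in C_c^\infty$, but the content is the same.
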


\begin{proof}
  Let 
  $v_k \subset  \mathring {H}_\textrm{loc}^{1} (\Omega\setminus\Omega') \cap 
  L^2_s(\Omega\setminus\Omega')$ satisfy (\ref{1.3.03})
  and 
   \[ \lim_{k \to \infty} \|v_k-v\|_{L^2(\Omega \setminus \Omega')}=0. \]
  Then, for any 
 $\Omega''$ such that $\Omega'\subset  \Omega''\subset \Omega$, $v_k$ 
 is a Cauchy sequence in $\mathring {H}^{1} (\Omega\setminus\Omega'')$. Indeed,
 since $v_k$ vanish on $\partial\Omega,$ this follows from
  Lemma \ref{Caccioppoli}.  Thus,
  $v_k\to v$ strongly   in $\mathring {H}_{loc}^{1} (\Omega\setminus\Omega')$.
  As a strong limit of solutions is a weak solution to (\ref{1.3.03}),
   the claim follows.
 \end{proof}

\begin{Lemma} Fix $\lambda \in \Bbb K$.  Let ${\mathcal A}^\lambda_n$ 
 be defined by (\ref{A})  
and 
${\mathcal T}^\lambda: L^2_s(\Omega\setminus\Omega')\to H^{-1/2}(\partial \Omega)$ be defined for $\varphi\in H^{1/2}(\partial \Omega)$  as 
\begin{equation}\label{T}
({\mathcal T}^\lambda(v),\varphi)=\int (\langle \sigma\nabla_A v, \nabla_A\tilde\varphi\rangle+ (q-\lambda)v\overline {\tilde\varphi}),
\end{equation} 
where $ \tilde\varphi$ is  a $H^{1}(\Omega)$ extension of $\varphi$ such that  $\tilde\varphi=0$ on $\Omega''$.
Then 
\[\Lambda^\lambda _{ n}-\Lambda^\lambda = {\mathcal T^\l} \circ {\mathcal A}^\lambda_n. \]
Moreover, the operators ${\mathcal A}^\lambda_n$ are uniformly bounded and the operator 
${\mathcal T}^\lambda$ is compact.
\end{Lemma}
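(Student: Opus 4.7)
The plan is to verify the three assertions in turn.

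For the factorization, fix $\psi \in \H(\p\Omega)$ and $\varphi \in \H(\p\Omega)$, and choose an extension $\tilde\varphi \in H^1(\Omega)$ of $\varphi$ that vanishes on a fixed intermediate set $\Omega''$ with $\overline{\Omega'} \Subset \Omega'' \Subset \Omega$ and satisfies $\|\tilde\varphi\|_{H^1(\Omega)} \le C\|\varphi\|_{\H(\p\Omega)}$. Since $\tilde\varphi$ and $\nabla_A \tilde\varphi$ vanish a.e.\ on $\Omega'$, subtracting the two weak formulations (\ref{weak formulation}) for $\Lambda^\lambda_n(\psi)$ and $\Lambda^\lambda(\psi)$ and using the hypothesis $\sigma_n = \sigma$, $A_n = A$, $q_n = q$ on $\Omega\setminus\Omega'$ produces
\[ \langle (\Lambda_n^\lambda - \Lambda^\lambda)\psi,\, \varphi\rangle = \int_{\Omega\setminus\Omega'} \bigl(\sigma \nabla_A d_n^\psi \cdot \overline{\nabla_A \tilde\varphi} + (q-\lambda)\, d_n^\psi\, \overline{\tilde\varphi}\bigr) = \langle \mathcal{T}^\lambda({\mathcal A}_n^\lambda \psi),\, \varphi\rangle, \]
which is the claimed identity. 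Uniform boundedness of ${\mathcal A}_n^\lambda$ has already been recorded in Lemma~\ref{D}.

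The main step is compactness of $\mathcal{T}^\lambda$, for which the idea is to iterate the Caccioppoli estimate of Lemma~\ref{Caccioppoli} on nested annular regions. Fix once and for all auxiliary sets with Lipschitz boundaries $\overline{\Omega'} \Subset \Omega_1 \Subset \Omega_2 \Subset \Omega$, and take $\Omega'' = \Omega_2$ in (\ref{T}). Let $(v_k) \subset L^2_s(\Omega\setminus\Omega')$ be bounded. By Lemma~\ref{solutions} each $v_k$ lies in $\mathring H^1_{\mathrm{loc}}(\Omega\setminus\Omega')$ and solves $\L^\lambda v_k = 0$ on $\Omega\setminus\Omega'$. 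A first application of Caccioppoli with inner set $\Omega'$ and outer set $\Omega_1$ yields a uniform bound on $\|v_k\|_{H^1(\Omega\setminus\Omega_1)}$. Since each $v_k$ has vanishing outer trace, extending by zero across $\p\Omega$ produces a bounded sequence in $H^1$ of a bounded Lipschitz annulus (whose inner boundary is $\p\Omega_1$), so Rellich--Kondrachov extracts a subsequence $(v_{k_j})$ converging strongly in $L^2(\Omega\setminus\Omega_1)$. The differences $v_{k_j} - v_{k_l}$ again lie in $\mathring H^1(\Omega\setminus\Omega_1)$ and solve the homogeneous equation there, so a second application of Lemma~\ref{Caccioppoli} with inner set $\Omega_1$ and outer set $\Omega_2$ gives
\[ \|v_{k_j} - v_{k_l}\|_{H^1(\Omega\setminus\Omega_2)} \le C \|v_{k_j} - v_{k_l}\|_{L^2(\Omega\setminus\Omega_1)} \longrightarrow 0. \]

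Finally, for any $\varphi$ with $\|\varphi\|_{\H(\p\Omega)} \le 1$, choosing its extension $\tilde\varphi$ to vanish on $\Omega_2$ with $\|\tilde\varphi\|_{H^1(\Omega)} \le C$, definition (\ref{T}) gives
\[ |\langle \mathcal{T}^\lambda(v_{k_j} - v_{k_l}),\, \varphi\rangle| \le C\, \|v_{k_j} - v_{k_l}\|_{H^1(\Omega\setminus\Omega_2)}, \]
so $(\mathcal{T}^\lambda v_{k_j})$ is Cauchy in $H^{-1/2}(\p\Omega)$, establishing compactness. The principal delicacy I anticipate is the Rellich step for possibly irregular $\p\Omega$; it is handled by the freedom to pick $\Omega_1, \Omega_2$ smooth and by the extension by zero across $\p\Omega$, which is legitimate precisely because elements of $L^2_s$ carry a zero outer trace by construction.
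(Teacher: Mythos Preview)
Your proof is correct and follows essentially the same strategy as the paper: two nested applications of the Caccioppoli inequality (Lemma~\ref{Caccioppoli}) combined with a compactness step to upgrade $L^2$-convergence to $H^1$-convergence on a smaller annulus, which in turn controls $\mathcal T^\lambda$. The only cosmetic difference is that the paper extracts a weak $H^1$-limit $v_\infty$ via Banach--Alaoglu and then shows $v_k \to v_\infty$ strongly, whereas you work directly with the Cauchy property of the subsequence; your explicit remark about extending by zero across $\p\Omega$ to justify Rellich--Kondrachov on a possibly irregular domain is a point the paper leaves implicit.
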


\begin{proof}

Notice that    the composition makes sense since 
$$\hbox{Range}({\mathcal A}^\lambda_n)\subset  L^2_s(\Omega\setminus\Omega')
\subset {\mathring {H}}_{loc}^1(\Omega \setminus \Omega').
$$  
The factorization is obvious 
and the uniform boundedness of ${\mathcal A}^\lambda_n$ is proven in  Lemma~\ref{D}.
Let us show that ${\mathcal T}^\lambda$ is compact. To this end it is  
sufficient to show that, if  $v_k\in L^2_s(\Omega\setminus\Omega')$ 
is a bounded sequence,  then  ${\mathcal T}^\lambda(v_k)$ is 
precompact in $H^{-1/2}(\partial \Omega)$. 

Let us take a  sequence of nested compact sets  
$\Omega' \subset \Omega'' \subset \Omega''' \subset \Omega$. 
 
It follows from Definition \ref{L_s} together with
 Cacciopoli inequality (\ref{Caccioppoli})   that  
$$
\|v_k\|_{H^{1} (\Omega\setminus\Omega'')} \le C \|v_k\|_{L^2 (\Omega\setminus\Omega')}.
$$
 By Banach-Alaoglu  theorem there is  a (not relabeled) subsequence $v_k$ 
 such that
 \[ v_k \to v_\infty \in H^{1} (\Omega\setminus\Omega'')
  \to 0, \]
where the convergence is
 weak  in $H^{1}(\Omega\setminus\Omega'')$ 
 and strong in ${L^2(\Omega \setminus \Omega'')}$.
 Then $v_\infty$
 is also a solution to the equation 
\[ \L^\lambda v_\infty =0 \textrm{ on }  \Omega\setminus\Omega''. \]
Thus, it follows  by   Caccioppoli inequality (\ref{Caccioppoli}) that
\begin{equation}\label{W2} \|v_k-v_\infty\|_{{H}^{1}(\Omega \setminus \Omega''')} \to 0. \end{equation}
Now, by the  definition of ${\mathcal T}^\lambda$,
$$\|{\mathcal T}^\lambda v_k-{\mathcal T}^\lambda v_\infty\|_{H^{-1/2}}= \sup_{\{\|\varphi\|_{H^{1/2}(\partial\Omega)}=1\}}\int( \langle \sigma \nabla_A (v_k-v_\infty), \nabla_A\tilde\varphi\rangle+(q-\lambda)(v_k-v_\infty)\overline {\tilde\phi}),
$$
where we take the  extension  function $\tilde\varphi$ so that 
$\hbox{supp}(\tilde\varphi) \subset \Omega\setminus\Omega'''$. Choosing
$\tilde\varphi$ so that
$$
\|\tilde\varphi\|_{H^{1}(\Omega)} \leq C \|  \varphi\|_{H^{1/2}(\p \Omega)},
$$
we see that
$$
\|{\mathcal T}^\lambda v_k-{\mathcal T}^\lambda v_\infty\|_{H^{-1/2}}
\leq CK
\|v_k-v_\infty\|_{H^{1}(\Omega \setminus \Omega''')}\|  \varphi\|_{H^{1/2}},$$
which tends to zero by (\ref{W2}). Thus, 
the desired compactness of ${\mathcal T}^\lambda(v_k)$ is proved. 

\end{proof}

We are now in position to complete the proof of Theorem \ref{2.1}.

\begin{proof}
Notice that $(H^{1/2})^*(\partial \Omega)=H^{-1/2}(\partial \Omega)$.
Taking $\tilde \phi$ in (\ref{weak formulation}) to be the solution of 
(\ref{condmagn}) with $\overline \l$ instead of $\l$ and $\phi$ instead 
of $\psi$, we see that 
$ (\Lambda^\l_{ n}-\Lambda^\l ):\H(\partial \Omega) \to 
H^{-1/2}(\partial \Omega) $ satisfies
$$
(\Lambda^\l_{ n}-\Lambda^\l )^*=(\Lambda^{\bar \l}_{ n}-\Lambda^{\bar \l} ).
$$
 Thus,
\[ (\Lambda^\l_{ n}-\Lambda^\l )=
\left({\mathcal T}^{\bar \l} \circ {\mathcal A}^{\bar\lambda}_n\right)^* = 
({\mathcal A}^{\bar\lambda}_n)^* \circ ({\mathcal T}^{\bar \l})^*, \]
where 
  $({\mathcal T}^{\bar \l})^*:H^{1/2}(\partial \Omega) \to L^2_s(\Omega \setminus \Omega')$  is a compact operator. Thus, for every 
$\epsilon>0$, there exists a finite dimensional projection operator $P_\epsilon: H^{1/2}(\partial \Omega) \to H^{1/2}(\partial \Omega)$, such that 
\begin{equation} \label{3.3.03}
\| ({\mathcal T}^{\bar \l})^*(I-P_\epsilon) \|_{H^{1/2}(\partial \Omega) \to L^2_s(\Omega \setminus \Omega')}\le \epsilon.
\end{equation}
Since $P_\epsilon$ is finite dimensional, it follows from the strong convergence of  $\Lambda^\l_{ n}-\Lambda^\l $, Proposition~\ref{key2.a},   that 
$$ 
\lim_{n \to \infty}\| \Lambda_{ n}^\l-\Lambda^\l )P_\epsilon \|_{H^{1/2}(\partial \Omega) \to H^{-1/2}(\partial \Omega) }=0.
$$
Moreover, the above limit is uniform wrt $\l \in \Bbb K$.
On the other hand, since 
$\| ({\mathcal A}^{\bar\lambda}_n)^* \|_{L^2_s(\Omega \setminus \Omega') \to H^{-1/2}(\partial \Omega) }\le C(\Bbb K),\, \l \in \Bbb K, $ we obtain 
from (\ref{3.3.03})
that 
$$
\| (\Lambda^\l_{ n}-\Lambda^\l ) (I- P_\e) \|_{H^{1/2}(\partial \Omega) \to H^{-1/2}(\partial \Omega) } \le 
  C \epsilon.
$$
Since $\epsilon$ is arbitrary, these estimates prove the theorem.
\end{proof}
\begin{Remark} \label{extra_smooth}
Assuming $\sigma, A \in C^{0,1}(\Omega \setminus \Omega')$, equation (\ref{28.3.1}) remains valid for the operator
norm in $H^{1/2}(\p \Omega)$. Moreover, assuming further smoothness of $\sigma$,  $A$ and $q$,
we obtain equation (\ref{28.3.1}) with the operator norm from $H^{1/2}$ to $H^s$ with
larger $s$.
\end{Remark}
\end{section}

\begin{section}{Application to cloaking} 
In this section we apply the previous construction to study an approximate invisibility as introduced in
\cite{GKLU}. To start, we recall the main result in \cite{GKLU}.

Let us consider $\Omega=B_r ,\ r=3$, where $B_r \subset \R^3$ is a ball of radius $r$ centered at $0$.
Denote by $\L_{out}$ the operator 
\begin{equation} \label{L-out}
\begin{aligned}
& \L_{out} u= -\nabla_{A_{out}} \cdot \nabla_{A_{out}} u +q_{out} u,
\\
&{\mathcal D}(A_{out})= \{u \in H^1_0(\Omega):\, \nabla_{A_{out}} \cdot \nabla_{A_{out}} u \in L^2(\Omega)\}.
\end{aligned}
\end{equation}
Here the magnetic potential $A_{out}$ and electric potential $q_{out}$ satisfy,
$$
|A_{out}|\cdot |x|\in L^{\infty}(\Omega), \quad q_{out} \in L^{\infty}(\Omega),
$$
see (15) and preceeding discussion in \cite{GKLU}, where $\beta_1$ stands for $A_{out}$ and $\kappa_1$
stands for $q_{out} $. Denote by $\Lambda_{out}^\l$ the Dirichlet-to-Neumann map corresponding
to $\L_{out}-\l$.

Next, consider the Dirichlet-to-Neumann map $\Lambda_{R, m,\e}^\l$ associated to the approximate cloaking.
To this end, consider the Dirichlet problem of the type (\ref{condmagn}),
\begin{equation}\label{D_invisibilty}
\begin{cases}
\L_{R, m, \e}^\lambda u=  -g_m^{-1/2}\nabla_{A} \cdot  \sigma_{R, \e}\nabla_{A} u +q u -\l u=0\\
u\big|_{\partial \Omega}=\psi,
\end{cases}
\end{equation}
cf. (126), (127) in \cite{GKLU}. Here $\sigma_{R, \e}$ is a regular, isotropic $G-$approximation to the singular cloaking
conductivity $\sigma_s$ and $g_m$ is a truncation of $g_s,$ namely, $ g_m(x)= \max\{m^{-1},\, g_s(x)\}$,
where $g_s= (\det \sigma_s)^{2/(n-2)}$.
To define  $\sigma_R, \,R  \geq 1$,
we start with the diffeomorphism
$F_R=(F_{1,R},  F_{2, R}):\,(B_3 \setminus B_{\rho})\sqcup B_R \to \Omega$, where $F_{2, R}$  is the identity on $B_R$,
while 
$$
 F_{1, R}(x)= \left(\frac{|x|} {2}+1\right) \frac{x}{|x|}, \,\, \rho < |x| <2, \, \rho=2(R-1);
 \quad F_{1, R}(x)=x,\,\, |x| >2.
$$
Then $\sigma_R=(F_R)_* (\gamma_0,\,  \gamma_0)$, where $\gamma_0^{ij}=\delta^{ij}$ so that $\sigma_R$ 
degenerates on $\p B_1$ when $R=1$ but is bounded for $R>1$, with however lower bound going to $0$
if $R \to 1$. We note that in \cite{GKLU}, for technical reasons, $\gamma_0$ is substituted by $2\gamma_0$ in $B_R$,
however, the constructions in \cite{GKLU} can be readily modified for the considered case.

With $A \in L^\infty(\Omega),\, q\in L^\infty(\Omega)$, the operator $\L_R$ is defined as in (\ref{operator-L})
with $\sigma_R$ instead of $\sigma_n$ and
an extra factor $g_s^{-1/2}$ in front of the main term in the right-hand side
of (\ref{operator-L}).  The operator $\L_1$ represents perfect cloaking but it is singular. To avoid further confusing in terminology
 we will denote this operator by $\L_{sing}$. The
operators $\L_R$ are self-adjoint in $L^2(\Omega, g_s^{1/2}dx) $.
Note that then
$$
A_{out} =(F_{1,1})^* (A|_{B_3 \setminus B_1}), \quad q_{out} =(F_{1,1})^* (q|_{B_3 \setminus B_1}),
$$
which, in particular, produces the $1/|x|$ singularity of $A_{out}$.

 At last, the isotropic $\sigma_{R, \e}$ are obtained from
$\sigma_{R}$ by de-homogenization, see S.3, \cite{GKLU}, so that, if $\l \notin \hbox{spec}(\L_{sing})$,
then, for $f \in L^2(\Omega)$,
$$
\left(\L_{R, \e, m}-\l I\right)^{-1} f \to \left(\L_{R, m}-\l I\right)^{-1} f,
$$
see Lemma 3.3, \cite{GKLU}.
Note that the condition $\l \notin \hbox{spec}(\L_{sing})$ implies that, for $R$
close to $1$, large $m$ and small $\e$, $\l$ is outside the spectra of all
the operators considered above so all the objects are well-defined.
 Then it is shown in \cite{GKLU}, see Corollary 4.4, that there exists
a sequence $R(n) \to 1,\, m(n) \to \infty,\, \e(n) \to 0$ such that, for any 
$h \in H^{3/2}(\p \Omega)$,
$$
\|\Lambda_{R(n), m(n),\e(n)}^\l h - \Lambda_{out}^\l h\|_{H^{1/2}(\p \Omega)} \to 0.
$$
Here $\Lambda_{R(n), m(n),\e(n)}^\l,\, \Lambda^\l_{out}$ are Dirichlet-to-Neumann maps
associated with the operators $\L_{R(n), \e(n), m(n)},\, \L_{out}$
and $\l \notin \hbox{spec}(\L_{sing})$.

Using the methods of section 2, we have
\begin{Theorem} \label{invisibility}
The exists a sequence $R(n) \to 1,\, m(n) \to \infty,\, \e(n) \to 0$ such that,
for any $\l \notin \hbox{spec}(\L_{sing})$,
$$
\|\Lambda_{R(n), m(n),\e(n)}^\l-\Lambda_{out}^\l \|_{H^{1/2}(\p \Omega) \to H^{-1/2}(\p \Omega)}
\to 0, \quad \hbox{as}\,\, n \to \infty.
$$
\end{Theorem}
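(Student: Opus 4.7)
The plan is to reduce Theorem \ref{invisibility} to the framework of Theorem \ref{2.1} by exploiting the geometric observation that every approximating operator coincides with $\L_{out}$ on the boundary annulus $\Omega \setminus B_2$. Indeed, for each $R \geq 1$ the map $F_R$ is the identity on $\{|x|>2\}$, so $\sigma_{R,\e}=I$ and $g_s \equiv g_m \equiv 1$ there, while by construction $A_{out}=A$ and $q_{out}=q$ on $\Omega \setminus B_1 \supset \Omega \setminus B_2$. Consequently, taking $\Omega' = B_2$ places us in the setting of Section 2, at least on the boundary side of $\Omega'$, which is all that the factorization argument requires.

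First I would invoke Corollary 4.4 of \cite{GKLU} to extract a sequence $R(n) \to 1$, $m(n) \to \infty$, $\e(n) \to 0$ along which the resolvents of $\L^\l_{R(n),m(n),\e(n)}$ converge strongly in $H^1_0(\Omega)$ to those of $\L^\l_{out}$. This plays the role of the $G$-convergence hypothesis in Theorem \ref{2.1}. Since $\l \notin \hbox{spec}(\L_{sing})$ implies $\l$ lies outside $\hbox{spec}(\L_{R(n),m(n),\e(n)})$ for all large $n$, the proofs of Lemma \ref{Gconvergencia} and Proposition \ref{key2.a} go through verbatim and yield
\[ \|(\Lambda^\l_{R(n),m(n),\e(n)} - \Lambda^\l_{out})\psi\|_{H^{-1/2}(\p\Omega)} \to 0 \quad \text{for every fixed } \psi \in H^{1/2}(\p\Omega). \]

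Next, with $\Omega' = B_2$, I would introduce the difference operator $\mathcal{A}^\l_n : H^{1/2}(\p\Omega) \to L^2_s(\Omega \setminus B_2)$ sending $\psi$ to $(u^\psi_n - u^\psi)|_{\Omega \setminus B_2}$, together with the map $\mathcal{T}^\l$ of (\ref{T}). Since the operators coincide on $\Omega \setminus B_2$, the difference $u^\psi_n - u^\psi$ solves the homogeneous equation $\L^\l(u^\psi_n - u^\psi) = 0$ there with zero trace on $\p\Omega$, so Lemma \ref{Caccioppoli} applies without change and delivers the factorization
\[ \Lambda^\l_{R(n),m(n),\e(n)} - \Lambda^\l_{out} = \mathcal{T}^\l \circ \mathcal{A}^\l_n, \]
with $\mathcal{A}^\l_n$ uniformly bounded in $n$ and $\mathcal{T}^\l$ compact, by the same arguments as in Section 2.

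Finally I would close with the dual-compactness step of the proof of Theorem \ref{2.1}: taking adjoints yields $\Lambda^\l_{R(n),m(n),\e(n)} - \Lambda^\l_{out} = (\mathcal{A}^{\bar\l}_n)^* \circ (\mathcal{T}^{\bar\l})^*$; approximating the compact operator $(\mathcal{T}^{\bar\l})^*$ within $\eta>0$ by a finite-rank projection $P_\eta$ and using the strong convergence on the finite-dimensional range of $P_\eta$ from the preceding step gives norm convergence modulo $C\eta$, which is arbitrary. The principal obstacle I anticipate is verifying that the Section 2 machinery is genuinely insensitive to the weight $g_m^{-1/2}$ inside $B_2$ and to the $|x|^{-1}$-type singularity of $A_{out}$ at the origin; but both singular features live strictly inside $B_1$, well separated from the boundary layer $\Omega \setminus B_2$ on which the factorization is built, so the Caccioppoli estimate and the compactness of $\mathcal{T}^\l$ remain valid without modification.
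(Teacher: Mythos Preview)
Your overall architecture is right, but there is a genuine gap in the reduction to Theorem \ref{2.1}. The operators $\L_{R(n),m(n),\e(n)}$ do \emph{not} satisfy the uniform ellipticity hypothesis (\ref{1.1.03}): as $R\to 1$ the lower bound of $\sigma_{R,\e}$ goes to $0$ on the annulus $1\le|x|\le 2$, and as $m\to\infty$ the weight $g_m$ degenerates there as well. These degenerations sit inside your $\Omega'=B_2$, so, as you say, the Caccioppoli estimate and the compactness of $\mathcal T^\l$ are unaffected. What \emph{is} affected is the uniform boundedness of $\mathcal A_n^\l$ (Lemma \ref{D}), whose proof in Section 2 was simply $\|d_n^\psi\|_{H^1(\Omega)}\le \|u_n^\psi\|_{H^1}+\|u^\psi\|_{H^1}\le C\|\psi\|$; without a uniform ellipticity constant on all of $\Omega$ you have no a priori bound on $\|u_n^\psi\|_{H^1(\Omega)}$, and the argument collapses. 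Relatedly, Corollary 4.4 of \cite{GKLU} concerns the DN maps, not resolvents, and the actual resolvent convergence available (Theorem 4.3 of \cite{GKLU}) is only in the weighted space $L^2(\Omega,g_s^{1/2}dx)$, not in $H^1_0(\Omega)$.

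The paper repairs this by never asking for control of $u_n^\psi$ on all of $\Omega$. Instead it introduces the restricted resolvents $\mathcal R_n(\l):f\mapsto\bigl((\L_n-\l)^{-1}f\bigr)|_{\Omega\setminus B_2}$ for $f$ supported in $\Omega\setminus B_2$, and uses the results of \cite{GKLU} together with a duality step to obtain a uniform bound $\|\mathcal R_n(\l)\|_{(\mathring H^1(\Omega\setminus B_2))^*\to L^2(\Omega\setminus B_2)}<C(\Bbb K)$ and weak $L^2$-convergence on the annulus. Writing $d_n^\psi=(\mathcal R_n(\l)-\mathcal R_{sing}(\l))h$ with $h$ built from an extension of $\psi$ supported in $\Omega\setminus B_{5/2}$ then yields the uniform bound $\|d_n^\psi\|_{L^2(\Omega\setminus B_2)}\le C\|\psi\|_{H^{1/2}}$ directly, bypassing global ellipticity. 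Only \emph{after} this does Caccioppoli upgrade to $H^1$ on a smaller annulus, and from there your factorization and finite-rank argument proceed as you describe. So the missing ingredient in your proposal is precisely this replacement of the global $H^1$ estimate on solutions by a localized $L^2$ estimate on the annulus obtained from the restricted-resolvent machinery.
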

Observe that $\Lambda_{out}$ does not depend upon the behaviour of $A$ and $q$ inside $B_1$. Thus, Theorem \ref{invisibility}
 means that $A|_{B_1},\, q|_{B_1}$ are almost cloaked from an external observer by a proper
choice of $\sigma_{R, \e}$. 

\begin{proof}
By Theorem 4.3, \cite{GKLU}, for $f \in L^2(\Omega)$ and $ \l \notin \hbox{spec}(\L_{sing})$,
\begin{equation} \label{r,e,m-limit}
\lim_{n \to \infty}\left(\L_{R(n), \e(n), m(n)} -\l I  \right)^{-1} f =
\left(\L_{sing} -\l I  \right)^{-1} f \quad \hbox{in} \,\, 
L^2(\Omega, g_s^{1/2} dx),
\end{equation}
where the convergence is uniform for $\l \in \Bbb K$, $\Bbb K$ being a compact in 
$\C \setminus \hbox{spec}(\L_{sing})$. 

Let, for $f \in L^2(\Omega \setminus B_2)$
\begin{equation} \label{mathcal_R}
{\mathcal R}_{R, m, \e}(\l) f= \left((\L_{R, m ,\e} -\l I )^{-1} f \right)|_{\Omega \setminus B_2} \in 
\mathring{H}^{1}(\Omega\setminus B_2) ,
\end{equation}
where in the right-hand side we continue $f$ by $0$ to $B_2$  and we use similar notation for 
${\mathcal R}_{R, m}(\l) $, etc. 
Our next goal is to show that, for $f \in L^2(\Omega \setminus B_2)$ and $\l \in \Bbb K$,
\begin{equation} \label{R1}
{\mathcal R}_{n}(\l) f \to {\mathcal R}_{sing}(\l) f, \quad \| {\mathcal R}_{n}(\l)  \|_{L^2 \to \mathring{H}^{1}} < C(\Bbb K).
\end{equation}
Here ${\mathcal R}_{n}(\l) ={\mathcal R}_{R(n), m(n), \e(n)}(\l) $ and the convergence in (\ref{R1})
is the weak  convergence in $\mathring{H}^{1}(\Omega\setminus B_2) $.

Indeed, using Lemmata 2.7, 2.8, \cite{GKLU}, for
$f \in L^2(\Omega, g_s^{1/2} dx)$ and  $\l \in \Bbb K$,
$$
\lim_{n \to \infty}\left(\L_{R(n)} -\l I  \right)^{-1} f =
\left(\L_{sing} -\l I  \right)^{-1} f \quad \hbox{in} \,\, H^1_0(\Omega, g_s^{1/2} dx),
$$
and there are $C(\Bbb K), R(\Bbb K)>1$ such that, for $R < R(\Bbb K)$, 
$$
\|\left(\L_{R} -\l I  \right)^{-1}\|_{L^2(g_s^{1/2} dx) \to H^1_0(g_s^{1/2} dx)} <C(\Bbb K).
$$

Since $\sigma_R(x)=\gamma_0,
\, g(x)=1$ for $|x| >2$, 
 these two equations imply  that  
\begin{equation} \label{step-1}
{\mathcal R}_R(\l) f \to {\mathcal R}_{sing}(\l) f \,\, \hbox{in}\,\, \mathring{H}^{1}(\Omega\setminus B_2),
\quad \|{\mathcal R}_R(\l)\|_{L^2 \to \mathring{H}^{1}}  <C(\Bbb K).
\end{equation}
Next, using Lemma 2.11, \cite{GKLU}, we see that equation (\ref{step-1})
remains valid if we put in  ${\mathcal R}_{R, m}(\l)$ instead of ${\mathcal R}_R(\l)$ and first take the limit as $m \to \infty$ and then 
as $R \to 1$. Here ${\mathcal R}_{R, m}(\l)$
are defined  by (\ref{mathcal_R}) with $\L_{R, m}$. 

At last, by means of Lemma 3.3, \cite{GKLU},  we see that (\ref{step-1})
remains valid, in the sense of the weak-convergence, if we
 put 
${\mathcal R}_{R, m, \e}(\l)$ instead of ${\mathcal R}_{R, m}(\l)$ and (\ref{R1}) follows.

Since ${\mathcal R}_{R, \e, m}^*(\l) h={\mathcal R}_{R, \e, m}({\overline \l}) h$, if
$h \in L^2(\Omega \setminus B_2)$,
 it follows from (\ref{R1}) that, for 
$h \in \left(\mathring{H}^{1}(\Omega\setminus B_2)\right)^*$ and $\l \in K$,
$$
\lim_{R \to 1} \lim_{m \to \infty} \lim_{\e \to 0}{\mathcal R}_{R, \e, m}(\l) h \to {\mathcal R}_1(\l) h \,\, \hbox{in}\,\, L^2(\Omega\setminus B_2),
$$
in the sense of the weak convergence in $L^2(\Omega\setminus B_2)$
and, when $R$ is sufficiently close to $1$, $m$ is sufficiently large and $\e$ 
is sufficiently close to $0$,
\begin{equation} \label{step-4}
 \|{\mathcal R}_n(\l)\|_{(\mathring{H}^{1}(\Omega\setminus B_2))^* \to L^2(\Omega\setminus B_2)} <C(\Bbb K),
\end{equation} 
Then, similar to the proof of Theorem 4.3,
\cite{GKLU}, there is a sequence $R(n) \to 1, \, m(n) \to
\infty,\, \e(n) \to 0$, such that
\begin{equation} \label{step-3}
\langle{\mathcal R}_n(\l) h,\, f \rangle \to 
\langle{\mathcal R}_1(\l)h, \, f \rangle,\quad
h \in \left(\mathring{H}^{1}(\Omega\setminus B_2)\right)^*,\,\,
f \in L^2(\Omega\setminus B_2).
\end{equation}
 Moreover, operators
${\mathcal R}_n(\l)$ satisfy  (\ref{step-4}).

To continue, consider the solutions $u_n^\psi(\l)$ and $u_1^\psi(\l)$ to
the Dirichlet problems (\ref{D_invisibilty}) for $\L_n^{\l}$ and $\L_{sing}^{\l}$
 with $\psi \in H^{1/2}(\p \Omega)$. 
Using a bounded extension,
$\tilde \psi \in H^1(\Omega),\, \hbox{supp}(\tilde \psi) \subset \Omega \setminus B_{5/2}$,
we see that $d_n^\psi(\l)= \left(u_n^\psi(\l)-u_{sing}^\psi(\l)\right)|_{\Omega \setminus B_2}$
satisfies
$$
d_n^\psi(\l) = \left({\mathcal R}_n(\l)-{\mathcal R}_{sing}(\l)  \right) h,
$$
where $h$ is given in terms of the  extension as
$$
h= \nabla_{A_{out}} \cdot \nabla_{A_{out}} \tilde \psi -(q_{out} -\l) \tilde \psi.
$$
Note that since $\hbox{supp}(h) \subset \Omega \setminus B_{5/2},$ we have
$h \in (\mathring{H}^{1}(\Omega \setminus B_2)^*$. Moreover, 
\begin{equation}\label{weak}
\|d_n^\psi(\l)\|_{L^2(\Omega \setminus B_2)} <C(\Bbb K) \|\psi\|_{H^{1/2}(\p \Omega)},\quad 
w-\lim d_n^\psi(\l) = 0. 
\end{equation}
where $w-\lim$ is the weak limit in $L^2(\Omega \setminus B_2)$.

Now, notice that  $\L_{sing} ^{\lambda} (d^\psi_n(\lambda))=0$ on $\Omega \setminus B_2$. Thus, we can
use the Cacciopoli inequality (\ref{equationring}) to obtain that   
\begin{equation} \label{3.9}
\|d_n^\psi(\l)\|_{H^1(\Omega \setminus B_{5/2})} \le  \|d_n^\psi(\l)\|_{L^2(\Omega \setminus B_2)} <C(\Bbb K) \|\psi\|_{H^{1/2}(\p \Omega)}
\end{equation}
and thus, by compactness of 
the Sobolev embeddding and (\ref{weak}) (and Kuratowski-Zorn Lemma), 
it follows that 
\begin{equation} \label{limits}
\limsup_{n \to \infty} \|d_n^\psi(\l)\|_{H^1(\Omega \setminus B_{11/4})} 
\le c \lim_{n \to \infty} \|d_n^\psi(\l)\|_{L^2(\Omega \setminus B_{5/2})} =0 \end{equation}
We can now mimic the arguments in section 2.  Namely, 
recall that, for $\psi, \varphi \in \H(\partial \Omega)$, we have that,
\[
\langle (\Lambda_{R(n), m(n),\e(n)}^\l-\Lambda_{out}^\l)\psi,
\varphi \rangle
\]
\[= 
\int_{\Omega \setminus B_{11/4}}
\left(\nabla_{A_{out}} d_n^\psi \cdot {\overline 
{\nabla_{A_{out}} {\tilde{\varphi}}}}
+(q_{out}-\l)d_n^\psi \bar{\tilde{\varphi}} \right).  \]
Here $\tilde{\varphi},\, \hbox{supp}(\tilde{\varphi}) \subset \Omega 
\setminus B_{11/4}$ is the extension of $\varphi$.
Thus, cf. the proof of Proposition \ref{key2.a} we have that
\begin{equation}\label{compactness}\begin{aligned}
\|(\Lambda_{R(n), m(n),\e(n)}^\l-\Lambda_{out}^\l) \psi\|_{ H^{-1/2}(\p \Omega)} \le C  
\|d_n^\psi(\l)\|_{H^1(\Omega \setminus B_{11/4})}
 \end{aligned}
 \end{equation}
and hence (\ref{limits}) yields the  strong convergence
\begin{equation}
\lim_{n \to \infty} \|(\Lambda_{R(n), m(n),\e(n)}^\l-\Lambda_{out}^\l) \psi\|_{ H^{-1/2}(\p \Omega)}=0
\end{equation}
Next, we  introduce the intermediate space 
\begin{Def}We denote    $L^2_s(\Omega\setminus B_{11/4})$  to be the  $L^2(\Omega\setminus B_{11/4})$-closure  of the  set $\{ u\in  {\mathring {H}}_{loc}^{1}(\Omega\setminus B_{11/4}):\,\L_{sing}^\lambda  u=0\}$.
\end{Def} 
We
factorize the difference of Dirichlet to Neumann maps by 
\[ \Lambda_{R(n), m(n),\e(n)}^\l-\Lambda_{out}^\l={\mathcal T}^\l \circ 
{\mathcal A}^\l_n. \]
Exactly as in the end of proof of Theorem \ref{2.1} 
in section 2,  ${\mathcal A}^\l_n: 
\H(\partial \Omega) \to  L^2_s(\Omega\setminus B_{11/4})$, defined by $A_n(\psi)=d^\psi_n(\lambda)$, is uniformly bounded 
in $n$ and, due to (\ref{3.9}), (\ref{compactness}),
 ${\mathcal T}^\l:L^2_s(\Omega\setminus B_{11/4}) \to  H^{-1/2}(\partial \Omega)$ are 
 compact. Since 
\[
\left(\Lambda_{R(n), m(n),\e(n)}^\l-\Lambda_{out}^\l\right)^*=
\Lambda_{R(n), m(n),\e(n)}^{\bar\l}-\Lambda_{out}^{\bar \l},
\] 
this gives rise to the factorization 
$({\mathcal A}^{\bar \l}_n)^* \circ ({\mathcal T}^{\bar\l})^*$ with compact
$({\mathcal T}^{\bar\l})^*$. Thus,
 we can find $P_\epsilon$ so that $({\mathcal T}^{\bar \l})^*(I-P_\epsilon)$ is small in norm
and prove the theorem.
\end{proof}

\begin{Remark}
By a slight modification of the arguments we can show that Theorem \ref{invisibility}
remains valid if $B_R$ is changed into an arbitrary smooth Riemannian manifold $(M, g)$
with $\p M$ diffeomorphic to $\p B_1$, cf. \cite{GKLU1}.
\end{Remark}

\begin{Remark} Similar to Remark \ref{extra_smooth}, Theorem \ref{invisibility}
remains valid
for the operator norm in $H^{1/2}(\p \Omega)$ if  $\sigma$ and $A$ are $C^{0, 1}-$smooth near $\p \Omega$.
\end{Remark}

\end{section}

\begin{section}
{\bf General Condition}

In this section we relax the conditions on the behaviour of 
$\sigma_n$ and $\sigma$ near the boundary under which 
the $G-$convergence implies the convergences of the Dirichlet-to-Neumann 
maps in
 the operator norm. 
 It will be desirable to be able to deal with the situation when,  for every $n$,
\[|\sigma_n(x)-\sigma(x)| \le C d(x,\partial \Omega)^{1+\epsilon}, \epsilon>0 \]
(this is the condition suggested by G.Alessandrini as mentioned in Introduction),
or when, for some $\Omega' \Subset \Omega$, we have that 
\[ \lim_{n \to \infty} \|\sigma_n-\sigma\|_{L^\infty(\Omega \setminus \Omega')}=0. \]
As discussed in the introduction,  we prove that actually a condition resembling the convergence of the conductivities and their normal derivatives at the boundary 
and   weaker than both  conditions above suffices.
\begin{Theorem}\label{general} Let for $\delta >0,\,$ 
 let $\Omega_{\delta}=\{x \in \Omega: d(x,\partial \Omega) \le \delta\}$. 
Assume that 
\begin{equation}\label{sharp}
\lim_{\delta \to 0} \delta^{-1} 
\left(\limsup_{n \to \infty} \|\sigma_n-\sigma\|_{L^{\infty}(\Omega_\delta)} \right)=0
\end{equation}
and that $\sigma_n \in M_K$ converges to $\sigma$ in the sense of the
$G-$convergence. Then 
\[\lim_{n \to \infty} \|\Lambda_{\sigma_n}-\Lambda_{\sigma} 
\|_{\H(\partial \Omega) \to H^{-1/2}(\partial \Omega)}=0.\]
\end{Theorem}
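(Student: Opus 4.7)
The plan is to adapt the compact-factorisation strategy of Theorem~\ref{2.1} to the case where the region $\{\sigma_n\neq\sigma\}$ is allowed to reach $\p\Omega$, by using the $\delta$-scaled Caccioppoli inequality~(\ref{equationring2}) on the shrinking boundary strip $\Omega_{2\delta}$ and letting the $o(\delta)$ smallness guaranteed by~(\ref{sharp}) absorb the $\delta^{-1/2}$ blow-up of the test-function extension.

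For $\psi,\phi\in H^{1/2}(\p\Omega)$, let $u_n,u$ solve~(\ref{cond}) with $\sigma_n,\sigma$ and set $d_n:=u_n-u\in H^1_0(\Omega)$; then
\[
 -\nabla\cdot(\sigma_n\nabla d_n)=\nabla\cdot F_n,\qquad F_n:=(\sigma_n-\sigma)\nabla u.
\]
For the extension of $\phi$ I take $\tilde\phi_\delta:=\eta_\delta H\phi$, where $H\phi$ is the harmonic extension and $\eta_\delta$ is the cut-off from~(\ref{etadelta}); it is supported in $\Omega_{2\delta}$, equals $\phi$ on $\p\Omega$, and satisfies $\|\nabla\tilde\phi_\delta\|_{L^2(\Omega)}\le C\delta^{-1/2}\|\phi\|_{H^{1/2}}$. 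The weak formulation~(\ref{weak formulation}) then gives
\[
 \langle(\Lambda_{\sigma_n}-\Lambda_\sigma)\psi,\phi\rangle=\int_{\Omega_{2\delta}}\sigma_n\nabla d_n\cdot\nabla\tilde\phi_\delta+\int_{\Omega_{2\delta}}(\sigma_n-\sigma)\nabla u\cdot\nabla\tilde\phi_\delta.
\]

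The second summand is bounded by $C\|\sigma_n-\sigma\|_{L^\infty(\Omega_{2\delta})}\|\psi\|\cdot\delta^{-1/2}\|\phi\|$, which by~(\ref{sharp}) is $o(\delta^{1/2})\|\psi\|\|\phi\|$ uniformly for large $n$. For the first summand I apply~(\ref{equationring2}) to $d_n$ on $\Omega_{2\delta}\subset\Omega_{4\delta}$, obtaining
\[
 \|\nabla d_n\|_{L^2(\Omega_{2\delta})}^2\le C\delta^{-2}\|d_n\|_{L^2(\Omega)}^2+C\|\sigma_n-\sigma\|_{L^\infty(\Omega_{4\delta})}^2\|\psi\|_{H^{1/2}}^2,
\]
so Cauchy--Schwarz together with the $\delta^{-1/2}$ bound on $\|\nabla\tilde\phi_\delta\|_{L^2}$ and a further application of~(\ref{sharp}) yields
\[
 \|\Lambda_{\sigma_n}-\Lambda_\sigma\|_{H^{1/2}\to H^{-1/2}}\le C\delta^{-3/2}\|{\mathcal A}_n\|_{H^{1/2}\to L^2(\Omega)}+o(\delta^{1/2}),\qquad {\mathcal A}_n\psi:=d_n^\psi.
\]

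The main obstacle is to promote the pointwise convergence $d_n^\psi\to 0$ in $L^2(\Omega)$ -- which follows from $G$-convergence together with the Rellich--Kondrachov theorem applied to the uniform bound $\|d_n^\psi\|_{H^1_0}\le C\|\psi\|_{H^{1/2}}$ -- to operator-norm convergence $\mathcal{A}_n\to 0$. I would close the argument exactly as in the end of the proof of Theorem~\ref{2.1}: the uniform $H^1_0$-bound places the image of the $H^{1/2}$-unit ball under every $\mathcal{A}_n$ into one fixed $L^2$-precompact set (so the family $\{\mathcal{A}_n\}$ is collectively compact), and this, combined with the factorisation $\Lambda_{\sigma_n}-\Lambda_\sigma=\mathcal{T}\circ\mathcal{A}_n+\mathcal{E}_{n,\delta}$ with $\mathcal{T}$ compact and $\|\mathcal{E}_{n,\delta}\|=o(\delta^{1/2})$, allows the adjoint-plus-finite-rank-projection trick (approximating $\mathcal{T}^*$ by $\mathcal{T}^*P_\varepsilon$) to convert the strong convergence into norm convergence. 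Given any $\eta>0$ one then first picks $\delta$ small enough, via~(\ref{sharp}), that the $o(\delta^{1/2})$ term is $<\eta/2$, and only then picks $n$ large enough that the $C\delta^{-3/2}\|\mathcal{A}_n\|$ term is also $<\eta/2$.
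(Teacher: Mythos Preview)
There is a genuine gap at the step where you pass from strong convergence of $\mathcal{A}_n\psi=d_n^\psi\to 0$ in $L^2(\Omega)$ to the operator-norm statement you need in the last line. You invoke a factorisation $\Lambda_{\sigma_n}-\Lambda_\sigma=\mathcal{T}\circ\mathcal{A}_n+\mathcal{E}_{n,\delta}$ with $\mathcal{T}$ compact and independent of $n$, and then the adjoint/finite-rank trick of Theorem~\ref{2.1}. But you never define $\mathcal{T}$, and no natural candidate exists: the first summand $\int_{\Omega_{2\delta}}\sigma_n\nabla d_n\cdot\nabla\tilde\phi_\delta$ contains $\sigma_n$, and recovering $\nabla d_n|_{\Omega_\delta}$ from $d_n\in L^2$ via Caccioppoli requires $d_n$ to solve a \emph{fixed} homogeneous equation in the strip, whereas in fact $\nabla\cdot\sigma\nabla d_n=\nabla\cdot(\sigma-\sigma_n)\nabla u_n$ has an $n$-dependent right-hand side. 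Note also that collective compactness of $\{\mathcal{A}_n\}$ together with $\mathcal{A}_n\psi\to 0$ does \emph{not} by itself give $\|\mathcal{A}_n\|\to 0$ (on $\ell^2$, take $K_nx=x_ne_1$: the range sits in a one-dimensional line, $K_nx\to 0$ for every $x$, yet $\|K_n\|=1$), so the parenthetical remark about collective compactness does not rescue the argument.

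The paper closes exactly this gap by splitting $d_n^\psi=v_{n,\delta}^\psi+m_{n,\delta}^\psi$ in $\Omega_{2\delta}$, where $m_{n,\delta}^\psi$ solves the \emph{fixed} equation $\nabla\cdot\sigma\nabla m=0$ in $\Omega_{2\delta}$ with boundary data $d_n^\psi$, and $v_{n,\delta}^\psi\in H^1_0(\Omega_{2\delta})$ absorbs the inhomogeneity $\nabla\cdot(\sigma-\sigma_n)\nabla u_n$. This produces three pieces $D_{n,\delta}+V_{n,\delta}+M_{n,\delta}$: the first two are bounded in operator norm directly by $C\delta^{-1}\|\sigma_n-\sigma\|_{L^\infty(\Omega_{2\delta})}$, while $M_{n,\delta}=\mathcal{T}^\delta\circ\mathcal{A}_{n,\delta}$ with $\mathcal{A}_{n,\delta}\psi=m_{n,\delta}^\psi\in L^2_s(\Omega_{2\delta})$ and $\mathcal{T}^\delta$ a genuinely $n$-independent compact operator on the space of $\sigma$-solutions (compactness following from iterated Caccioppoli on that space, exactly as in Section~2). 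Only after isolating this $\sigma$-harmonic part does the adjoint/projection trick apply.

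A secondary point: your bound $\|\nabla\tilde\phi_\delta\|_{L^2}\le C\delta^{-1/2}\|\phi\|_{H^{1/2}}$ presupposes a boundary-strip estimate $\|H\phi\|_{L^2(\Omega_{2\delta})}\le C\delta^{1/2}\|\phi\|_{H^{1/2}}$, which needs some regularity of $\partial\Omega$. The paper works with the cruder bound $\|\tilde\phi_\delta\|_{H^1}\le C\delta^{-1}\|\phi\|_{H^{1/2}}$ precisely to avoid any assumption on the domain.
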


For the sake of simplicity we will consider only the case of the conductivity equation at $\l =0$, however,  with the obvious modifications
 the proof will work as well for the more general operators
treated in section 2.   We also recall the smooth extensions and restrictions of Sobolev functions to $\Omega_\delta$. 

Let $\eta_\delta \in C^{0,1}(\Omega)$ be supported in 
$\Omega_\delta$ with 
$\|\nabla \eta_{\delta} \|_{L^\infty(\Omega_{2\delta})} \le C/ \delta$, see (\ref{etadelta}) from the 
proof of \eqref{equationring}.

  Then  
 set 
 $\psi_\delta=\eta_\delta \tilde \psi \in H^1(\Omega)$ and 
 observe that
$\supp(\psi_\delta) \subset \Omega_{2\delta}$  and  
\begin{equation}\label{delta}
\| \psi_\delta \|_{H^1(\Omega)} \le \|\eta_\delta \|_{C^{0,1}(\Omega)} \|\tilde \psi\|_{H^1(\Omega)} \le C \delta^{-1} \|\psi\|_{H^{1/2}(\partial\Omega)}.
\end{equation}
Now recall Caccioppoli estimate \eqref{equationring2},

  \begin{equation}\label{1.7.03}
\int_{\Omega_\delta}|\nabla w|^2 
\leq C \left(\delta^{-2} \int_{\Omega_{2\delta}}|w|^2+
\int_{\Omega_{2\delta}}|F|^2 +
\int_{\Omega_{2\delta}}|f|^2 \right),
 \end{equation}
 where $w\in {\mathring H}^1(\Omega_{2\delta})$ 
 satisfies (\ref{3.1.03}) in $\Omega_{2\delta}$.

In order to prove the convergence 
of the Dirichlet-to-Neumann maps in the operator norm,  we treat 
the current case as a perturbation of the one considered in section 2,
where
$\sigma_n=\sigma$ near the boundary.  As in section 2, we introduce the function $d_n^{\psi}=u_n^\psi- u^\psi$. 

Let us recall that the $G-$convergence implies the convergence of the solutions to the corresponding Dirichlet problems 
  (see \cite[Thm22.9]{D}). In the next lemma we give a quick proof under the condition (\ref{sharp_0}) valid also for the situation in section 
  3 where we do not have uniform ellipticity. 
  
  
  
 \begin{Lemma} \label{lem:d_n} Let $\sigma_n$  G-converge to $\sigma$. Then,
for any $\psi \in \H(\p \Omega)$,
\begin{equation}\label{2.7.03}
\|d_n^{\psi} \|_{L^2(\Omega)} \to 0,\quad{as}\,\, n \to \infty.
\end{equation}
\end{Lemma}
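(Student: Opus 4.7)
The plan is to reduce convergence of $d_n^\psi$ in $L^2(\Omega)$ to the $G$-convergence statement (\ref{resolvent}), which applies only to solutions with zero boundary data and a fixed right-hand side in $H^{-1}$. The obstruction is that after subtracting a common extension of $\psi$, the source that appears involves $\sigma_n$ and therefore still depends on $n$. Hypothesis (\ref{sharp}) is tailor-made to kill precisely this $n$-dependence, since the $o(\delta)$ smallness of $\|\sigma_n-\sigma\|_{L^\infty(\Omega_\delta)}$ matches the $\delta^{-1}$ cost of squeezing an extension of $\psi$ into the thin strip $\Omega_{2\delta}$.

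Concretely, I fix $\delta > 0$ and use the cutoff extension $\psi_\delta = \eta_\delta \tilde{\psi}$ from (\ref{etadelta})--(\ref{delta}), which is supported in $\Omega_{2\delta}$ and satisfies $\|\nabla \psi_\delta\|_{L^2(\Omega)} \le C\delta^{-1}\|\psi\|_{H^{1/2}(\partial\Omega)}$. Writing $u_n^\psi = \psi_\delta + v_n^\delta$ and $u^\psi = \psi_\delta + v^\delta$ with $v_n^\delta, v^\delta \in H^1_0(\Omega)$, these solve
\[
-\nabla\cdot(\sigma_n\nabla v_n^\delta) = \nabla\cdot(\sigma_n\nabla\psi_\delta),\qquad
-\nabla\cdot(\sigma\nabla v^\delta) = \nabla\cdot(\sigma\nabla\psi_\delta),
\]
so $d_n^\psi = v_n^\delta - v^\delta$. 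Decomposing $\sigma_n\nabla\psi_\delta = \sigma\nabla\psi_\delta + (\sigma_n-\sigma)\nabla\psi_\delta$, I split $v_n^\delta = w_n^\delta + r_n^\delta$, where $w_n^\delta, r_n^\delta \in H^1_0(\Omega)$ solve
\[
-\nabla\cdot(\sigma_n\nabla w_n^\delta) = \nabla\cdot(\sigma\nabla\psi_\delta),\qquad
-\nabla\cdot(\sigma_n\nabla r_n^\delta) = \nabla\cdot((\sigma_n-\sigma)\nabla\psi_\delta).
\]

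With $\delta$ held fixed, the right-hand side of the $w_n^\delta$-equation is a fixed element of $H^{-1}(\Omega)$, so by the definition of $G$-convergence (\ref{resolvent}) applied at $\lambda=0$ followed by Rellich's theorem, $w_n^\delta \to v^\delta$ strongly in $L^2(\Omega)$. For $r_n^\delta$, testing against $r_n^\delta$ itself and using ellipticity of $\sigma_n$, Cauchy--Schwarz, the Poincar\'e inequality, and (\ref{delta}), yields
\[
\|r_n^\delta\|_{L^2(\Omega)} \le C(\Omega)\|\nabla r_n^\delta\|_{L^2(\Omega)} \le C K\, \|\sigma_n-\sigma\|_{L^\infty(\Omega_{2\delta})}\, \delta^{-1}\, \|\psi\|_{H^{1/2}(\partial\Omega)}.
\]

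Combining the two ingredients: given any $\varepsilon > 0$, hypothesis (\ref{sharp}) lets me choose $\delta > 0$ so small that $\limsup_n \delta^{-1}\|\sigma_n-\sigma\|_{L^\infty(\Omega_{2\delta})} < \varepsilon$. With $\delta$ then fixed, $w_n^\delta \to v^\delta$ in $L^2$, while $\limsup_n \|r_n^\delta\|_{L^2(\Omega)} \le CK\varepsilon\|\psi\|_{H^{1/2}(\partial\Omega)}$, so $\limsup_n \|d_n^\psi\|_{L^2(\Omega)} \le CK\varepsilon\|\psi\|_{H^{1/2}(\partial\Omega)}$, and letting $\varepsilon \to 0$ gives (\ref{2.7.03}). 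The delicate point, and the reason (\ref{sharp}) has the $\limsup_n$ placed inside, is that the order of limits is forced: one must first send $n\to\infty$ with $\delta$ fixed in order to invoke $G$-convergence for a fixed source, and only afterwards let $\delta \to 0$ to exploit the near-boundary smallness of $\sigma_n-\sigma$.
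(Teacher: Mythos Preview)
Your proof is correct and follows essentially the same approach as the paper: the paper writes $d_n^\psi = (\L_n^{-1}-\L^{-1})(\nabla\cdot\Psi_\delta) + \L_n^{-1}(\nabla\cdot F_{n,\delta})$ with $\Psi_\delta=-\sigma\nabla\psi_\delta$ and $F_{n,\delta}=(\sigma-\sigma_n)\nabla\psi_\delta$, which is exactly your decomposition $(w_n^\delta-v^\delta)+r_n^\delta$ in different notation. The two-step argument---fix $\delta$, use $G$-convergence on the fixed-source piece, then bound the remainder via (\ref{sharp}) and let $\delta\to 0$---is identical in both.
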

\begin{proof}
With $\psi_\delta$ as above, we have
\[
\begin{aligned}
& \L_n (u_n^\psi-\psi_\delta)=\nabla \cdot \Psi_\delta +
\nabla \cdot F_{n, \delta},\quad
\L (u^\psi-\psi_\delta)=\nabla \cdot \Psi_\delta, \\
& \Psi_\delta = -\sigma \nabla \psi_\delta, \quad F_{n, \delta}=
(\sigma-\sigma_n)\nabla \psi_\delta.
\end{aligned}
\]
Thus,
\[
d_n^\psi= (\L_n^{-1}-\L^{-1}) (\nabla \cdot \Psi_\delta) + 
\L_n^{-1}(\nabla \cdot F_{n, \delta}) =I^1_{n, \delta}+I^2_{n, \delta}.
\]

Notice that in the case $\sigma=\sigma_n$ on $\Omega_\delta$,  $I^2_{n, \delta}=0$.
Due to (\ref{1.1.03}) and (\ref{delta}),
\[
\begin{aligned}&
\|I^2_{n, \delta} \|_{H^1_0(\Omega)} \leq C(K) 
\|\sigma_n-\sigma  \|_{L^\infty(\Omega_{2\delta})} 
\|\psi_\delta  \|_{H^1(\Omega)} \\
& \leq C(K) \delta^{-1}
\|\sigma_n-\sigma  \|_{L^\infty(\Omega_{2\delta})} \|\psi\|_{\H(\p \Omega)}.
\end{aligned}
\]
Thus, due to condition (\ref{sharp_0}), for any $\e >0$ there are
$\delta(\e),\, n(\delta,\e)$ such that if $\delta <\delta(\e),\,
n >n(\delta,\e)$, 
\[
\|I^2_{n, \delta} \|_{H^1_0(\Omega)} <\e.
\]
Fixing $\delta <\delta(\e)$ and taking into the account that
$\sigma_n$ $G-$converges to $\sigma$, we see that
\[
\|I^1_{n, \delta} \|_{L^2(\Omega)} \to 0 \quad \hbox{as}\,\, n \to \infty.
\]
These two equations imply (\ref{2.7.03}). 
\end{proof}

 Next we represent $d_n^\psi$ in $\Omega_{2\delta}$ as
 \[
 d_n^\psi=v_{n, \delta}+m_{n, \delta},
 \]
 where $v_{n, \delta}^\psi,\,m^\psi_{n,\delta}$ are the solutions to the 
following equations
\begin{equation}\label{vsol}
\begin{cases}
&\nabla \cdot \sigma \nabla v_{n,\delta}^{\psi}=
\nabla \cdot((\sigma -\sigma_n)\nabla u^\psi_n)\\
&v_{n,\delta}^{\psi}=0 \text{ on }\partial (\Omega_{2\delta})
\end{cases}
\end{equation}
and
\begin{equation}\label{msol}
\begin{cases}
&\nabla \cdot \sigma \nabla m_{n,\delta}^{\psi}=0\text{ in }\Omega_{2\delta}\\
&m_{n,\delta}^{\psi}=d_n^{\psi} \text{ on }\partial (\Omega_{2\delta}).
\end{cases}
\end{equation}
Notice that $v_{n,\delta}^{\psi} \in {H}_0^{1} (\Omega_{2\delta}),\,
m_{n,\delta}^{\psi}\in \mathring {H}^{1} (\Omega_{2\delta})$.

Therefore, using the weak definition of the Dirichlet-to-Neumann map, we have
\begin{equation}\label{decomposition}
\begin{aligned}
&\langle(\Lambda_{\sigma_n}- \Lambda_{\sigma})\psi,\varphi\rangle=
\int_{\Omega_{2\delta}}(\sigma_n\nabla u^\psi_n-\sigma\nabla u^\psi )\cdot \nabla \varphi_\delta
\\&=
\int_{\Omega_{2\delta}}\sigma\nabla d_n^\psi \cdot \nabla \varphi_\delta +
\int_{\Omega_{2\delta}}(\sigma_n-\sigma)\nabla u^\psi_h\cdot \nabla  \varphi_\delta
\\&=
\int_{\Omega_{2\delta}}(\sigma_n-\sigma)\nabla u^\psi_h\cdot \nabla \varphi_\delta +
\int_{\Omega_{2\delta}}\sigma\nabla v^\psi_{n, \delta} \cdot \nabla \varphi_\delta +
\int_{\Omega_{2\delta}}\sigma\nabla m^\psi_{n, \delta} \cdot \nabla \varphi_\delta
\\&=
\langle D_{n, \delta} \psi,\,\varphi\rangle+
\langle V_{n, \delta} \psi,\,\varphi\rangle+\langle M_{n, \delta} \psi,\,\varphi\rangle.
\end{aligned}
\end{equation}
We summarize the above in the following lemma.
\begin{Lemma}\label{tres}
Let ${D_{n,\delta}}, V_{n,\delta}, M_{n,\delta}:\,
\H(\partial \Omega) \to H^{-1/2}(\partial \Omega) $ be defined 
by the last equation in (\ref{decomposition}). Then, 
for each $\delta>0$,
\begin{equation} \label{5.8.03}
\Lambda_{\sigma_n}- \Lambda_{\sigma}=
{D_{n,\delta}}+ V_{n,\delta}+ M_{n,\delta}.
\end{equation}
\end{Lemma}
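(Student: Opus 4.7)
The plan is to obtain \eqref{5.8.03} by reading it directly off the chain of equalities in \eqref{decomposition}, so the content of the proof reduces to verifying that each of the three bilinear forms appearing there extends to a bounded operator from $H^{1/2}(\partial\Omega)$ to $H^{-1/2}(\partial\Omega)$. To set the stage, I would first observe that the weak formulation \eqref{weak formulation} of $\Lambda_\sigma$ is independent of the particular $H^1$-extension of the boundary datum, because any two extensions differ by an element of $H^1_0(\Omega)$ against whose gradient $\sigma\nabla u^\psi$ pairs to zero in view of \eqref{cond}; the same applies to $\Lambda_{\sigma_n}$. I am therefore free to insert the localized extension $\varphi_\delta=\eta_\delta\tilde\varphi$ constructed just before the lemma, which is supported in $\Omega_{2\delta}$.

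With that extension, $\langle(\Lambda_{\sigma_n}-\Lambda_\sigma)\psi,\varphi\rangle$ becomes an integral over $\Omega_{2\delta}$, and the algebraic identity $\sigma_n\nabla u_n^\psi-\sigma\nabla u^\psi=\sigma\nabla d_n^\psi+(\sigma_n-\sigma)\nabla u_n^\psi$, together with the splitting $d_n^\psi=v_{n,\delta}^\psi+m_{n,\delta}^\psi$ in $\Omega_{2\delta}$ furnished by \eqref{vsol}--\eqref{msol}, produces exactly the three bilinear forms $\langle D_{n,\delta}\psi,\varphi\rangle$, $\langle V_{n,\delta}\psi,\varphi\rangle$, $\langle M_{n,\delta}\psi,\varphi\rangle$ whose sum equals $\langle(\Lambda_{\sigma_n}-\Lambda_\sigma)\psi,\varphi\rangle$. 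The identity \eqref{5.8.03} then follows by $H^{-1/2}$--$H^{1/2}$ duality, once each bilinear form is known to be bounded.

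What remains is a routine boundedness check, carried out for the fixed value of $\delta$ (the operator norms may and will depend on $\delta$, which is harmless since \eqref{5.8.03} is claimed only for each $\delta$). For $D_{n,\delta}$, Cauchy--Schwarz gives a bound by $\|\sigma_n-\sigma\|_{L^\infty(\Omega_{2\delta})}\|\nabla u_n^\psi\|_{L^2}\|\nabla\varphi_\delta\|_{L^2}$, controlled via the uniform $H^1$ estimate for $u_n^\psi$ together with the cut-off estimate \eqref{delta}. For $V_{n,\delta}$, the standard energy estimate applied to the Dirichlet problem \eqref{vsol} in $\Omega_{2\delta}$ yields $\|\nabla v_{n,\delta}^\psi\|_{L^2(\Omega_{2\delta})}\le C(K)\|\sigma-\sigma_n\|_{L^\infty(\Omega_{2\delta})}\|\psi\|_{H^{1/2}}$, and a further Cauchy--Schwarz finishes the bound.

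The trickiest step, which I would regard as the main point requiring care, is the bound for $M_{n,\delta}$: one must control the solution $m_{n,\delta}^\psi$ of \eqref{msol} by the trace of $d_n^\psi$ on the interior component of $\partial\Omega_{2\delta}$, which forces a $\delta$-dependent constant to appear through the trace theorem applied inside $\Omega$. Concretely, well-posedness of \eqref{msol} gives $\|\nabla m_{n,\delta}^\psi\|_{L^2(\Omega_{2\delta})}\le C\|d_n^\psi|_{\partial\Omega_{2\delta}}\|_{H^{1/2}(\partial\Omega_{2\delta})}$, and the right-hand side is majorized, via the trace theorem together with $d_n^\psi|_{\partial\Omega}=0$, by $C(\delta)\|d_n^\psi\|_{H^1(\Omega)}\le C(\delta,K)\|\psi\|_{H^{1/2}}$. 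Once all three boundedness estimates are in place, \eqref{5.8.03} is nothing other than the computation in \eqref{decomposition}, now legitimized as an identity in $\mathcal L(H^{1/2}(\partial\Omega),H^{-1/2}(\partial\Omega))$.
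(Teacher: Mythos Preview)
Your reading of \eqref{decomposition} is exactly what the paper intends: the lemma is stated there merely as a summary of the chain of equalities preceding it, and your justification of the first two steps (independence of the extension, the algebraic splitting of $\sigma_n\nabla u_n^\psi-\sigma\nabla u^\psi$, the decomposition $d_n^\psi=v_{n,\delta}^\psi+m_{n,\delta}^\psi$) is correct and matches the paper. Your boundedness arguments for $D_{n,\delta}$ and $V_{n,\delta}$ are also fine and are in fact the content of the paper's subsequent Lemma~\ref{DV}.

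There is, however, a genuine gap in your treatment of $M_{n,\delta}$. You invoke well-posedness of \eqref{msol} in the form $\|\nabla m_{n,\delta}^\psi\|_{L^2(\Omega_{2\delta})}\le C\|d_n^\psi|_{\partial\Omega_{2\delta}}\|_{H^{1/2}(\partial\Omega_{2\delta})}$ and then a trace theorem on $\partial\Omega_{2\delta}$. But the paper explicitly assumes \emph{no regularity whatsoever} on $\Omega$; the inner boundary $\{x:d(x,\partial\Omega)=2\delta\}$ of $\Omega_{2\delta}$ is a level set of a merely Lipschitz function and need not be a Lipschitz hypersurface, so neither the space $H^{1/2}(\partial\Omega_{2\delta})$ nor the trace inequality you use is available in general. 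The paper avoids this entirely: since $m_{n,\delta}^\psi-d_n^\psi=-v_{n,\delta}^\psi\in H^1_0(\Omega_{2\delta})$ by construction, one can test \eqref{msol} directly against $m_{n,\delta}^\psi-d_n^\psi$ (equivalently, use the Dirichlet-energy minimizing property of the $\sigma$-harmonic extension) to obtain $\|\nabla m_{n,\delta}^\psi\|_{L^2(\Omega_{2\delta})}\le K^2\|\nabla d_n^\psi\|_{L^2(\Omega_{2\delta})}\le C(K)\|\psi\|_{H^{1/2}}$, with no reference to traces on the inner boundary. Replacing your trace argument by this energy comparison closes the gap and keeps the proof at the generality the paper claims.
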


We start bounding  the  first  and  second terms on the above decomposition.
\begin{Lemma}\label{DV}
Let $D_{n,\delta}$ and ${V_{n,\delta}}$ be defined as above. Then 
\begin{equation} \label{DV-eq}
\begin{aligned}
&\|D_{n, \delta}\|_{H^{1/2}(\partial\Omega)\to H^{-1/2}(\partial\Omega)}+\|{V}_{n,\delta}\|_{H^{1/2}(\partial\Omega)\to H^{-1/2}(\partial\Omega)} 
\\&
\le C
\delta^{-1} \| \sigma_n-\sigma\|_{L^{\infty}(\Omega_{2\delta})} .
\end{aligned}
\end{equation}
\end{Lemma}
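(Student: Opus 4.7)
The plan is to handle $D_{n,\delta}$ and $V_{n,\delta}$ separately by a direct Cauchy--Schwarz / energy argument, using the cut-off bound (\ref{delta}) to produce the claimed factor $\delta^{-1}$ and the fact that $\sigma_n-\sigma$ enters each term linearly to produce a single power of $\|\sigma_n-\sigma\|_{L^\infty(\Omega_{2\delta})}$.

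For $D_{n,\delta}$ I would start from the defining bilinear form in (\ref{decomposition}),
\[
\langle D_{n,\delta}\psi,\varphi\rangle = \int_{\Omega_{2\delta}} (\sigma_n-\sigma)\nabla u_n^{\psi}\cdot \nabla \varphi_\delta ,
\]
and apply H\"older, pulling out $\|\sigma_n-\sigma\|_{L^\infty(\Omega_{2\delta})}$. The factor $\|\nabla u_n^{\psi}\|_{L^2(\Omega)}$ is then controlled by the standard energy estimate $\|u_n^{\psi}\|_{H^1(\Omega)}\le C\|\psi\|_{H^{1/2}(\partial\Omega)}$, which holds by uniform ellipticity (\ref{1.1.03}). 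The remaining factor $\|\nabla\varphi_\delta\|_{L^2(\Omega)}$ is bounded by $C\delta^{-1}\|\varphi\|_{H^{1/2}(\partial\Omega)}$, thanks to (\ref{delta}). Combining the three estimates gives the desired bound on $\|D_{n,\delta}\|_{H^{1/2}\to H^{-1/2}}$.

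For $V_{n,\delta}$ the one extra step is to control $\|\nabla v^{\psi}_{n,\delta}\|_{L^2(\Omega_{2\delta})}$. I would test the Dirichlet problem (\ref{vsol}) against $v^{\psi}_{n,\delta}$ itself in $H^1_0(\Omega_{2\delta})$; using uniform ellipticity of $\sigma$ on the left and Cauchy--Schwarz with the energy estimate for $u_n^{\psi}$ on the right gives
\[
\|\nabla v^{\psi}_{n,\delta}\|_{L^2(\Omega_{2\delta})} \le C\,\|(\sigma_n-\sigma)\nabla u_n^{\psi}\|_{L^2(\Omega_{2\delta})} \le C\,\|\sigma_n-\sigma\|_{L^\infty(\Omega_{2\delta})}\|\psi\|_{H^{1/2}(\partial\Omega)}.
\]
Then the bilinear form defining $V_{n,\delta}$, namely $\langle V_{n,\delta}\psi,\varphi\rangle=\int_{\Omega_{2\delta}}\sigma\nabla v^{\psi}_{n,\delta}\cdot\nabla\varphi_\delta$, is again handled by H\"older together with (\ref{delta}), producing the same bound $C\delta^{-1}\|\sigma_n-\sigma\|_{L^\infty(\Omega_{2\delta})}$.

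I do not anticipate any real obstacle: the argument is a clean triple application of Cauchy--Schwarz, and the two structural ingredients are transparent. The factor $\delta^{-1}$ is the unavoidable price of differentiating the cut-off $\eta_\delta$ in forming $\varphi_\delta$, while the appearance of a single power of $\|\sigma_n-\sigma\|_{L^\infty(\Omega_{2\delta})}$ reflects that $\sigma_n-\sigma$ occurs linearly, directly in $D_{n,\delta}$ and through the source term of (\ref{vsol}) in $V_{n,\delta}$. This is exactly the rate needed for (\ref{sharp}) to force $\|D_{n,\delta}\|+\|V_{n,\delta}\|\to 0$ along a suitable diagonal choice of $\delta=\delta(n)$, which is what the later argument will exploit when combining with a separate treatment of the harmonic remainder $M_{n,\delta}$.
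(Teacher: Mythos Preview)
Your proposal is correct and follows essentially the same route as the paper: Cauchy--Schwarz together with the cut-off bound (\ref{delta}) handles $D_{n,\delta}$, and for $V_{n,\delta}$ one tests (\ref{vsol}) against $v^{\psi}_{n,\delta}$ itself to extract the energy bound $\|\nabla v^{\psi}_{n,\delta}\|_{L^2(\Omega_{2\delta})}\le C\|\sigma_n-\sigma\|_{L^\infty(\Omega_{2\delta})}\|\psi\|_{H^{1/2}}$, then applies Cauchy--Schwarz and (\ref{delta}) again.
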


\begin{proof}
The case of $D_{n,\delta}$ follows from Cauchy-Schwartz inequality and (\ref{delta}).   
The definition of  $ V_{n,\delta}$ implies that  
\[ \|{V_{n,\delta}}\|_{H^{1/2}(\partial\Omega)\to H^{-1/2}(\partial\Omega)} \le C \delta^{-1} \,
 \sup_{ \|\psi\|_{\H(\partial \Omega)}=1}\, \|\nabla v^\psi_{n,\delta}\|_{L^2(\Omega_{2\delta})} \]
Using  $v^\psi_{n,\delta}\in  H^{1}_0(\Omega_{2\delta})$  as  a test function for  the  
equation \eqref{vsol}, we see that 
\begin{equation} \label{2.8.03}
\begin{aligned} 
&\int_{\Omega_{2\delta}}  |\nabla v^\psi_{n,\delta}|^2  \le K  
   \int_{\Omega_{2\delta}}|\sigma \nabla v^\psi_{n,\delta} \cdot \nabla  v^\psi_{n,\delta}| \\ &\leq K \int_{\Omega_{2\delta}} |(\sigma -\sigma_n)\nabla u^\psi_n\cdot \nabla  v^\psi_{n,\delta}|
\\ & 
\leq K \| (\sigma -\sigma_n)\nabla u_n^\psi\|_{L^2(\Omega_{2\delta})}\,\|\nabla v_n^\psi\|_{L^2(\Omega_{2\delta})}
\\ &  \le K \|\sigma -\sigma_n\|_{L^\infty(\Omega_{2\delta})}\,
 \| \nabla u_n^\psi\|_{L^2(\Omega_{2\delta})}\,
 \|\nabla v_n^\psi\|_{L^2(\Omega_{2\delta})}
 \end{aligned}
 \end{equation}
Dividing both terms by 
$\|\nabla v_{n, \delta}^\psi\|_{L^2(\Omega_{2 \delta})}$ and recalling that 
$\| \nabla u_n^\psi\|_{L^2(\Omega)} \le  C \|\psi\|_{\H(\partial \Omega)}$
the claim follows.
\end{proof}
Note that, by continuing $v^\psi_{n, \delta}$ by $0$ into $\Omega \setminus
\Omega_{2\delta}$, (\ref{2.8.03}) implies that
\begin{equation} \label{3.8.03}
\| v_{n, \delta}^\psi\|_{{\mathring{H}}^1(\Omega)} \le C \|\sigma -\sigma_n\|_{L^\infty(\Omega_{2\delta})}\,\|\psi\|_{\H(\partial \Omega)}.
\end{equation}

Now we deal with the term $M_{n,\delta}$.  Namely, we prove 
the strong uniform boundedness of both $M_{n,\delta}(\psi)$
and $M^*_{n,\delta}(\psi)$.  

 \begin{Lemma}\label{P} Let $\psi \in H^{1/2}(\partial\Omega)$ and 
 $\delta>0$. Then there exists a constant $C(K)$ such that ,
  for any $\delta >0$,
 \begin{equation} \begin{aligned} \label{1.9.03}
 \limsup_{n \to \infty} \|M_{n,\delta}(\psi)\|_{H^{-1/2}(\partial\Omega)} & \leq
 C \delta^{-1} \limsup_{n \to \infty}  
 \| \sigma_n-\sigma\|_{L^{\infty}(\Omega_{2\delta})}
 \|\psi\|_{H^{1/2}(\partial\Omega)}.
\end{aligned}
\end{equation}
Also,
 \begin{equation} \label{2.9.03}\begin{aligned}
 \limsup_{n \to\infty} \|M^*_{n,\delta}(\psi)\|_{H^{-1/2}(\partial\Omega)}
\leq C\delta^{-1} \limsup_{n \to \infty}  
 \| \sigma_n-\sigma\|_{L^{\infty}(\Omega_{2\delta})}  \|\psi\|_{H^{1/2}(\partial\Omega)} .
\end{aligned}
\end{equation}
 \end{Lemma}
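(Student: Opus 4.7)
The plan is to prove both inequalities by reducing to gradient estimates on $d_n^\psi$ via the interior Caccioppoli inequality, and, for the adjoint bound, to exploit the self-adjointness of $\Lambda_{\sigma_n}$ and $\Lambda_\sigma$ so as to avoid asking for any uniform convergence in the second argument.

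For (\ref{1.9.03}), I would start from the definition of $M_{n,\delta}$ and apply Cauchy-Schwarz with (\ref{delta}) to get $\|M_{n,\delta}\psi\|_{H^{-1/2}}\le CK\delta^{-1}\|\nabla m_{n,\delta}^\psi\|_{L^2(\Omega_{2\delta})}$. Because $m_{n,\delta}^\psi$ is the $\sigma$-harmonic minimizer of the Dirichlet energy in $\Omega_{2\delta}$ with the same trace as $d_n^\psi$ on $\partial\Omega_{2\delta}$, a one-line minimization argument (comparing against $d_n^\psi$ itself) gives $\|\nabla m_{n,\delta}^\psi\|_{L^2(\Omega_{2\delta})}\le K\|\nabla d_n^\psi\|_{L^2(\Omega_{2\delta})}$. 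Since $d_n^\psi\in H^1_0(\Omega)$ satisfies $\nabla\cdot\sigma_n\nabla d_n^\psi=\nabla\cdot(\sigma-\sigma_n)\nabla u^\psi$, the Caccioppoli bound (\ref{equationring2}) applied at scale $2\delta$ yields $\|\nabla d_n^\psi\|_{L^2(\Omega_{2\delta})}\le C(\delta^{-1}\|d_n^\psi\|_{L^2(\Omega_{4\delta})}+\|\sigma-\sigma_n\|_{L^\infty(\Omega_{4\delta})}\|\psi\|_{H^{1/2}})$. Taking $\limsup_n$ and using Lemma~\ref{lem:d_n} to kill the $L^2$ piece produces (\ref{1.9.03}).

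For (\ref{2.9.03}) the direct analogue would require $\sup_{\|\varphi\|=1}\|d_n^\varphi\|_{L^2}\to 0$, which $G$-convergence does not supply. I would instead introduce the $\sigma$-harmonic extension $p_\psi\in H^1(\Omega_{2\delta})$ with $p_\psi|_{\partial\Omega}=\psi$ and $p_\psi|_{\Gamma_{2\delta}}=0$ (where $\Gamma_{2\delta}=\partial\Omega_{2\delta}\cap\Omega$); minimizing against $\psi_\delta$ gives $\|\nabla p_\psi\|_{L^2(\Omega_{2\delta})}\le CK\delta^{-1}\|\psi\|_{H^{1/2}}$. Starting from $\langle M^*_{n,\delta}\psi,\varphi\rangle=\langle M_{n,\delta}\varphi,\psi\rangle$ and using that $\psi_\delta-p_\psi$ and $m_{n,\delta}^\varphi-d_n^\varphi=-v_{n,\delta}^\varphi$ both lie in $H^1_0(\Omega_{2\delta})$ while $m_{n,\delta}^\varphi$ and $p_\psi$ are $\sigma$-harmonic, one rewrites
\begin{equation*}
\langle M^*_{n,\delta}\psi,\varphi\rangle=\int_{\Omega_{2\delta}}\sigma\nabla d_n^\varphi\cdot\nabla p_\psi.
\end{equation*}
Decomposing $p_\psi=\psi_\delta+r$ with $r\in H^1_0(\Omega_{2\delta})$, plugging $r$ into the weak identity $\int\sigma\nabla d_n^\varphi\cdot\nabla w=\int(\sigma-\sigma_n)\nabla u_n^\varphi\cdot\nabla w$ (valid for all $w\in H^1_0(\Omega)$, derived directly from the PDEs of $u_n^\varphi$ and $u^\varphi$), and invoking the weak definitions of $\Lambda_{\sigma_n}$ and $\Lambda_\sigma$ on the piece involving $\psi_\delta$, I would arrive at the key identity
\begin{equation*}
\langle M^*_{n,\delta}\psi,\varphi\rangle=\int_{\Omega_{2\delta}}(\sigma-\sigma_n)\nabla u_n^\varphi\cdot\nabla p_\psi+\langle(\Lambda_{\sigma_n}-\Lambda_\sigma)\varphi,\psi\rangle.
\end{equation*}

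Since $\Lambda_{\sigma_n}$ and $\Lambda_\sigma$ are self-adjoint, the second summand equals $\langle(\Lambda_{\sigma_n}-\Lambda_\sigma)\psi,\varphi\rangle$, which by (\ref{5.8.03}) decomposes as $\langle D_{n,\delta}\psi+V_{n,\delta}\psi+M_{n,\delta}\psi,\varphi\rangle$. Subtracting $\langle M_{n,\delta}\psi,\varphi\rangle$ on both sides gives
\begin{equation*}
\langle(M^*_{n,\delta}-M_{n,\delta})\psi,\varphi\rangle=\int_{\Omega_{2\delta}}(\sigma-\sigma_n)\nabla u_n^\varphi\cdot\nabla p_\psi+\langle D_{n,\delta}\psi+V_{n,\delta}\psi,\varphi\rangle.
\end{equation*}
Cauchy-Schwarz with $\|\nabla u_n^\varphi\|_{L^2(\Omega)}\le C\|\varphi\|_{H^{1/2}}$ and the bound on $\|\nabla p_\psi\|_{L^2(\Omega_{2\delta})}$ handles the integral, while Lemma~\ref{DV} handles the last two terms; together they give $\|(M^*_{n,\delta}-M_{n,\delta})\psi\|_{H^{-1/2}}\le C\delta^{-1}\|\sigma-\sigma_n\|_{L^\infty(\Omega_{2\delta})}\|\psi\|_{H^{1/2}}$. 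Combined with (\ref{1.9.03}) this yields (\ref{2.9.03}). The main obstacle is exactly the asymmetry just exposed: Caccioppoli converts pointwise $L^2$-convergence of $d_n^\psi$ into gradient smallness for $M_{n,\delta}\psi$, but no such argument is available uniformly in $\varphi$ for $d_n^\varphi$, and the self-adjointness detour through $\Lambda_{\sigma_n}-\Lambda_\sigma$ together with the operator-norm bound on $D_{n,\delta}+V_{n,\delta}$ is precisely what lets us close the estimate for $M^*_{n,\delta}$.
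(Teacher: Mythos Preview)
Your argument for (\ref{1.9.03}) is essentially the paper's: Cauchy--Schwarz against $\varphi_\delta$, the energy comparison $\|\nabla m_{n,\delta}^\psi\|\lesssim\|\nabla d_n^\psi\|$ (the paper phrases this as testing (\ref{msol}) with $m_{n,\delta}^\psi-d_n^\psi$, you phrase it as Dirichlet minimization, but these are the same), then Caccioppoli on $d_n^\psi$ and Lemma~\ref{lem:d_n}. The only discrepancy is that your Caccioppoli at scale $2\delta$ produces $\|\sigma_n-\sigma\|_{L^\infty(\Omega_{4\delta})}$ rather than $\Omega_{2\delta}$; this is harmless for the application (just relabel $\delta$) but does not literally match the stated inequality.

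For (\ref{2.9.03}) your proof is correct but substantially longer than necessary. The paper bypasses the auxiliary extension $p_\psi$ and all the integration-by-parts manipulations: since $\Lambda_{\sigma_n}-\Lambda_\sigma$ is self-adjoint, taking adjoints in the operator identity (\ref{5.8.03}) gives immediately
\[
M_{n,\delta}^* \;=\; M_{n,\delta} + (D_{n,\delta}+V_{n,\delta}) - (D_{n,\delta}^*+V_{n,\delta}^*),
\]
and (\ref{2.9.03}) then follows from (\ref{1.9.03}) and Lemma~\ref{DV} in one line. Your computation in effect rederives this identity at the level of bilinear forms (your integral term $\int(\sigma-\sigma_n)\nabla u_n^\varphi\cdot\nabla p_\psi$ is precisely $-\langle(D_{n,\delta}^*+V_{n,\delta}^*)\psi,\varphi\rangle$ in disguise), so the two routes coincide once unpacked. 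The paper's version has the advantage of making transparent that no new analytic input beyond (\ref{1.9.03}) and the operator-norm bound on $D_{n,\delta}+V_{n,\delta}$ is needed; your version, while correct, partially obscures this by reintroducing PDE manipulations that the decomposition (\ref{5.8.03}) had already absorbed.
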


 \begin{proof}
 Notice that, as follows from (\ref{delta}) and (\ref{decomposition}),  
for each $\delta>0 $ and $\varphi \in H^{1/2}(\partial\Omega)$,
 \begin{equation} \label{11.1.04}  
 |\langle M_{n,\delta}(\psi), \varphi\rangle| \le C 
  \delta^{-1} \|\nabla m^\psi_{n,\delta} \|_{L^2(\Omega_\delta)} 
  \| \varphi\|_{H^{1/2}(\partial\Omega)} 
\end{equation}
Now we use $m^\psi_{n,\delta}-d^{\psi}_n =-v^\psi_{n,\delta} \in H^1_0(\Omega_{2\delta})$ 
as a test function in 
equation (\ref{msol}). By the strong ellipticity we get
  that  
\[ 
\|\nabla m^\psi_{n,\delta} \|_{L^2(\Omega_{2\delta})} \le K^2 \|\nabla d^\psi_{n} \|_{L^2(\Omega_{2\delta})}
\leq C \| \psi \|_{H^{1/2}(\p \Omega)}.
\] 
 Now observe that 
\[
\begin{cases}
&\nabla \cdot \sigma_n \nabla  d_{n}^{\psi}= \nabla \cdot (\sigma_n-\sigma) \nabla  u^{\psi}\text{ in }\Omega_{2\delta}\\
&d_n^{\psi} =0 \text{ on }\partial \Omega.
\end{cases}
\]
Thus, by (\ref{1.7.03}) it follows that
\[
\begin{aligned}
\|\nabla d^\psi_{n} \|_{L^2(\Omega_\delta)}  & \le 
\frac{C}{\delta} \|d^\psi_{n}\|_{L^2(\Omega)}+ C \| (\sigma -\sigma_n)\nabla u^\psi \|_{L^2(\Omega_{2\delta})}
\\ & \le \frac{C}{\delta}\|d^\psi_{n}\|_{L^2(\Omega)}+ C 
\| (\sigma -\sigma_n)\|_{L^2(\Omega_{2\delta})} \|\nabla u^\psi \|_{L^2(\Omega)}.
\end{aligned}
\]
Fix $\delta >0$ and let $n$ go to $\infty$. Then, using
  Lemma \ref{lem:d_n}
  and (\ref{11.1.04}), we arrive at the desired estimate (\ref{1.9.03}).

   At last, (\ref{2.9.03}) follows from (\ref{1.9.03})
and (\ref{DV-eq}) 
since, due to the fact that the DN  maps  are  self-adjoint, 
we have  from (\ref{5.8.03}) that
\[ 
 M^*_{n,\delta} =M_{n,\delta}+
{V}_{n,\delta}+D_{n,\delta}-{V^*}_{n,\delta} -D^*_{n,\delta}.
\]
\end{proof}

Next, following section 2, we reintroduce the following definitions:
\begin{Def}
We denote   by $L^2_{s}(\Omega_{2\delta})$   the  $L^2(\Omega_{2\delta})$-closure 
 of the  set $\{ u\in  {\mathring {H}}_{loc}^{1}(\Omega_{2\delta}):\, 
 \nabla \cdot \sigma\nabla  u=0\}$.
 \end{Def}
 We introduce also the modified operators,
 \begin{equation} \label{6.8.03}
 {\mathcal A}_{n,\delta}: \H(\p \Omega) \to L^{2}_s(\Omega_{2\delta}),
 \quad
 {\mathcal A}_{n,\delta}(\psi)=  m_{n, \delta}^\psi,
 \end{equation}
 and  
 \begin{equation} \label{7.8.03}
 {\mathcal T}^\delta: 
 L^2_{s}(\Omega_{2\delta}) \to H^{-1/2}(\partial\Omega), \quad 
  \langle {\mathcal T}^\delta(v),\varphi \rangle
 =\int_{\Omega_{2\delta}} \sigma \nabla v \cdot \nabla \varphi_\delta,
 \end{equation}
 so that
\begin{equation} \label{4.9.03}
{M}_{n,\delta}={\mathcal T}^\delta \circ {\mathcal A}_{n,\delta}.
\end{equation}
 Here $\psi_\delta$ is defined as in the beginning of this section.

\begin{Lemma} \label{T-A}
The operators ${\mathcal T}^\delta$
are compact and, for any fixed $\delta >0$, the operators 
${\mathcal A}_{n,\delta}$ are uniformly bounded wrt $n$.
\end{Lemma}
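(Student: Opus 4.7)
The plan is to verify the two assertions separately. The uniform boundedness of $\mathcal{A}_{n,\delta}$ follows from a short energy argument: since $m^\psi_{n,\delta}$ is the $\sigma$-harmonic extension into $\Omega_{2\delta}$ of the trace $d_n^\psi|_{\partial \Omega_{2\delta}}$, energy minimisation together with the ellipticity bound $\sigma\in M_K$ yields $\|\nabla m^\psi_{n,\delta}\|_{L^2(\Omega_{2\delta})}\le C(K)\|\nabla d_n^\psi\|_{L^2(\Omega_{2\delta})}$; Poincar\'e's inequality on $\Omega_{2\delta}$ (exploiting the vanishing trace on $\partial\Omega$) converts this to an $L^2$-bound with constant $C(\delta,K)$. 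Combining this with the $n$-independent control $\|d_n^\psi\|_{H^1(\Omega)}\le \|u_n^\psi\|_{H^1}+\|u^\psi\|_{H^1}\le C(K)\|\psi\|_{H^{1/2}(\partial\Omega)}$ and the fact that $m^\psi_{n,\delta}$ lies in $L^2_s(\Omega_{2\delta})$ tautologically (as a genuine $\sigma$-harmonic function vanishing on $\partial\Omega$) gives the claim.

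The compactness of $\mathcal{T}^\delta$ is the substantive part, and I would mimic the Caccioppoli-based argument used to establish compactness in the proof of Theorem \ref{2.1}. Starting from a bounded sequence $\{v_k\}\subset L^2_s(\Omega_{2\delta})$, the Caccioppoli estimate \eqref{equationring2} applied on the pair of scales $(\delta,2\delta)$ bounds $\{v_k\}$ in $H^1(\Omega_\delta)$. After extracting a subsequence, Rellich's theorem yields $v_k\rightharpoonup v_\infty$ weakly in $H^1(\Omega_\delta)$ and strongly in $L^2(\Omega_\delta)$, and since the solution space is closed under weak $H^1$-convergence, $v_\infty$ is again $\sigma$-harmonic on $\Omega_\delta$ with zero trace on $\partial\Omega$. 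A second application of Caccioppoli on the pair $(\delta/2,\delta)$ to the difference $v_k-v_\infty$ then upgrades the convergence to strong convergence in $H^1(\Omega_{\delta/2})$.

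The main obstacle is to extract from this the desired convergence of $\mathcal{T}^\delta(v_k)$ in $H^{-1/2}(\partial\Omega)$, since \textit{a priori} the test function $\varphi_\delta$ in \eqref{T} only has support in the larger strip $\Omega_{2\delta}$, while the strong $H^1$-convergence of $v_k$ is only available on $\Omega_{\delta/2}$. The remedy is to exploit the freedom in the choice of the $H^1$-extension $\tilde\varphi$ of $\varphi$ entering the definition of $\varphi_\delta$: selecting $\tilde\varphi$ supported in $\Omega_{\delta/4}$ (available for any bounded domain by composing a standard extension operator with a cutoff supported near $\partial\Omega$, at the expense of a $\delta$-dependent constant) forces $\varphi_\delta=\tilde\varphi$ to be supported in $\Omega_{\delta/4}\subset\Omega_{\delta/2}$. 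A Cauchy--Schwarz estimate using $\sigma\in M_K$ then gives $|\langle\mathcal{T}^\delta(v_k-v_\infty),\varphi\rangle|\le C(\delta,K)\|v_k-v_\infty\|_{H^1(\Omega_{\delta/2})}\|\varphi\|_{H^{1/2}(\partial\Omega)}$, whose right-hand side vanishes by the previous step, establishing compactness.
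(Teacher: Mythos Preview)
Your proof is correct and follows essentially the same approach as the paper. The only difference is presentational: where you run a direct sequential compactness argument (Caccioppoli to gain $H^1$ control on a smaller strip, Rellich to extract a strongly $L^2$-convergent subsequence, a second Caccioppoli to upgrade to strong $H^1$-convergence, and then the freedom in the extension $\tilde\varphi$ to conclude), the paper packages the same steps as a factorisation $\mathcal{T}^\delta=\widetilde{\mathcal T}^\delta\circ R_\delta$ with $R_\delta:L^2_s(\Omega_{2\delta})\to \mathring H^1(\Omega_{3\delta/2})\hookrightarrow L^2_s(\Omega_{3\delta/2})\to \mathring H^1(\Omega_\delta)$ compact and $\widetilde{\mathcal T}^\delta$ bounded, using the extension $\varphi_{\delta/2}$ in place of your $\tilde\varphi$ supported in $\Omega_{\delta/4}$; likewise the uniform boundedness of $\mathcal A_{n,\delta}$ is in both cases just the boundedness of the map $\psi\mapsto d_n^\psi\mapsto d_n^\psi|_{\partial\Omega_{2\delta}}\mapsto m_{n,\delta}^\psi$, which you phrase via energy minimisation.
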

\begin{proof}
As in section 2, we notice that the Caccioppoli inequality,
Lemma \ref{Caccioppoli} and the compactness of the Sobolev embedding 
imply that the  restriction operator,
\[
R_\delta:L^2_{s}(\Omega_{2\delta}) \to {\mathring{H}}^1(\Omega_{3\delta/2}) \to
L^2_s(\Omega_{3\delta/2}) \to {\mathring{H}}^1(\Omega_{\delta}),
\] 
is a bounded and, due to the second embedding above, compact
operator. Now the operator  $ {\widetilde{\mathcal T}^\delta}: H^1(\Omega_\delta) \to H^{-1/2}(\partial\Omega)$ 
defined by 
\[ \langle \tilde{T}_\delta(v),\varphi\rangle
=\int_{\Omega_\delta} \sigma \nabla v \cdot \nabla \varphi_{\delta/2},
\quad \varphi \in H^{1/2}(\p \Omega),
\]
is continuous. Hence ${\mathcal T}^\delta={\widetilde{\mathcal T}^\delta} \circ R_\delta$ is also compact.

As for the uniform boundedness, for a fixed $\delta$, of the
 operators ${\mathcal A}_{n,\delta}$, it follows from
the decomposition
of    $A_{n,\delta}$ in form:
$$\psi \in H^{1/2}\mapsto d_{n,\delta}^\psi \in H^1(\Omega) \mapsto
\hbox{trace} ( d_{n}^\psi )\in H^{1/2}(\partial\Omega_{2 \delta})\mapsto m_{n,\delta}^\psi \in L^{2}(\Omega_{2 \delta}).$$
\end{proof}
 We can now return to the uniform estimate for the operators
$ M_{n,\delta}$.
 \begin{Lemma}\label{Poperator}
  \begin{equation} \limsup_{n \to \infty} \|M_{n,\delta}\|_{H^{1/2}(\partial\Omega)\to H^{-1/2}(\partial\Omega)} \le C\delta^{-1} \limsup_{n \to \infty}  
 \| \sigma_n-\sigma\|_{L^{\infty}(\Omega_{2\delta})}. 
\end{equation}
 \end{Lemma}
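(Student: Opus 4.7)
The plan is to mimic the factorization–compactness argument from the end of the proof of Theorem~\ref{2.1}, replacing the role of strong convergence to zero by the pointwise limsup bound~(\ref{2.9.03}) on $M_{n,\delta}^*$.

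First, I will use $\|M_{n,\delta}\|_{op}=\|M_{n,\delta}^*\|_{op}$ together with the factorization from Lemma~\ref{T-A} and~(\ref{4.9.03}), namely
\[
M_{n,\delta}^*=\mathcal{A}_{n,\delta}^*\circ(\mathcal{T}^\delta)^*,
\]
in which $(\mathcal{T}^\delta)^*\colon H^{1/2}(\partial\Omega)\to L^2_s(\Omega_{2\delta})$ is compact and, for fixed $\delta$, the $\mathcal{A}_{n,\delta}^*$ are uniformly bounded in $n$ by some $C(\delta)$. For any $\epsilon>0$, the compactness of $(\mathcal{T}^\delta)^*$ between Hilbert spaces lets me pick a finite-rank orthogonal projection $P_\epsilon$ on $H^{1/2}(\partial\Omega)$ with $\|(\mathcal{T}^\delta)^*(I-P_\epsilon)\|<\epsilon$; this gives the tail estimate
\[
\sup_{n}\,\|M_{n,\delta}^*(I-P_\epsilon)\|_{op}\le C(\delta)\,\epsilon.
\]

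Next, I will upgrade the pointwise bound~(\ref{2.9.03}) to an operator-norm bound on the finite-dimensional space $V_\epsilon:=\mathrm{Range}(P_\epsilon)$. Writing $Q(\delta):=\limsup_n\|\sigma_n-\sigma\|_{L^\infty(\Omega_{2\delta})}$, estimate~(\ref{2.9.03}) reads $\limsup_n\|M_{n,\delta}^*\psi\|\le C\delta^{-1}Q(\delta)\,\|\psi\|$ for each individual $\psi\in V_\epsilon$. On $V_\epsilon$ this pointwise family is eventually equibounded by Banach--Steinhaus, after which a standard subsequence extraction on the compact unit sphere of $V_\epsilon$ (pick an almost-extremizing unit sequence $\psi_n\in V_\epsilon$, pass to $\psi_{n_k}\to\psi^*$, decompose $M_{n_k,\delta}^*\psi_{n_k}=M_{n_k,\delta}^*\psi^*+M_{n_k,\delta}^*(\psi_{n_k}-\psi^*)$, and apply~(\ref{2.9.03}) to $\psi^*$ while using the equibound on the remainder) yields
\[
\limsup_{n}\|M_{n,\delta}^*P_\epsilon\|_{op}\le C\delta^{-1}Q(\delta).
\]
Adding the tail estimate and letting $\epsilon\downarrow 0$ gives the stated conclusion.

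The main obstacle is the finite-dimensional upgrade step: moving from a pointwise limsup estimate of the form $\limsup_n\|T_n\psi\|\le c\|\psi\|$ to an operator-norm estimate $\limsup_n\|T_n\|_{op}\le c$. On an infinite-dimensional space this is false in general, so the whole argument hinges on the compactness of $(\mathcal{T}^\delta)^*$, which forces the relevant portion of $M_{n,\delta}^*$ to live on a finite-dimensional subspace where compactness of the unit ball and the Banach--Steinhaus argument make the upgrade possible. This is the exact analogue of the role played by plain strong convergence (the degenerate case $c=0$) in the closing step of the proof of Theorem~\ref{2.1}.
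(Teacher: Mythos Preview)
Your proposal is correct and follows essentially the same route as the paper. Both argue via the factorization $M_{n,\delta}^*=\mathcal A_{n,\delta}^*\circ(\mathcal T^\delta)^*$, use compactness of $(\mathcal T^\delta)^*$ to split off a finite-rank projector $P_\epsilon$ with small tail, and then invoke the pointwise bound~(\ref{2.9.03}) on the finite-dimensional range; the paper simply asserts the finite-dimensional upgrade (``by using~(\ref{2.9.03}) and the fact that $\mathrm{Range}(P_{\epsilon,\delta})$ has finite dimension'') whereas you spell out the compact-unit-sphere subsequence argument explicitly. One minor remark: the appeal to Banach--Steinhaus is unnecessary, since the uniform bound $\sup_n\|M_{n,\delta}^*\|\le C(\delta)$ is already immediate from the factorization and Lemma~\ref{T-A}, and that is all you need to control the remainder $M_{n_k,\delta}^*(\psi_{n_k}-\psi^*)$.
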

 
 \begin{proof}
 By (\ref{4.9.03}), it follows from Lemma \ref{T-A}
that, for any $\e >0, \, \delta>0,$
there is a finite dimensional projector 
$P_{\epsilon, \delta}: \H(\partial \Omega) \to \H(\partial \Omega)$ such that 
\begin{equation}\label{1-proy} \| M_{n,\delta}^* \circ (I-P_{\epsilon,\delta}) \|_{\H(\partial \Omega) \to H^{-1/2}(\partial \Omega)} \le \epsilon.
\end{equation}
Then,   by using estimate (\ref{2.9.03}) and the fact that  
$ Range(P_{\epsilon, \delta})$ has finite dimension, it follows that, for a fixed $\delta$, 
 \begin{equation}\label{proy} \limsup_{n \to \infty} 
\| M_{n,\delta}^* \circ P_{\epsilon, \delta} \|
_{\H(\partial \Omega) \to H^{-1/2}(\partial \Omega)} \le C\delta^{-1} \limsup_{n \to \infty}  
 \| \sigma_n-\sigma\|_{L^{\infty}(\Omega_{2\delta})}.
\end{equation}
Note that the constant $C$ in the above estimate is chosen independent of $\e, \, \delta$.

    Thus,  
from \eqref{1-proy} and \eqref{proy}, we have
\[ \limsup_{n \to \infty} \| M_{n,\delta}^* \|_{\H(\partial \Omega) \to H^{-1/2}(\partial \Omega)} 
\le C\delta^{-1} \limsup_{n \to \infty}  
 \| \sigma_n-\sigma\|_{L^{\infty}(\Omega_{2\delta})}  + \epsilon, \]
for arbitrary $\epsilon>0$. Since 
\[
\| M_{n,\delta}^* \|_{\H(\partial \Omega) 
\to H^{-1/2}(\partial \Omega)}=
\| M_{n,\delta} \|_{\H(\partial \Omega) \to H^{-1/2}(\partial \Omega)},
\]
 the claim follows.  
 \end{proof}

To complete the proof of Theorem \ref{general}  ,
by Lemmata \ref{tres}, \ref{DV} and \ref{Poperator} we get that, for every $\delta>0$,
\[\limsup_{n \to \infty} \|\Lambda_{\sigma_n}-\Lambda_{\sigma} \|_{\H(\partial \Omega) \to H^{-1/2}(\partial \Omega)} 
\le 
C \delta^{-1} \limsup_{n \to \infty}  
 \| \sigma_n-\sigma\|_{L^{\infty}(\Omega_{2\delta})} . \]

We complete this section by an example which shows that,
in order to achieve the uniform convergence of the DN maps, one 
should indeed control the behavior of $\sigma_n$ in 
some vicinity of $\p \Omega$ and to see, in fact,   that the control of $\sigma_n$ and even all its derivatives on $\p \Omega$ is not sufficient.

\begin{Theorem} For any $\alpha >0$,
there exists a sequence $\sigma_n\in M_{1+\alpha}(B(0,1))$,   $\sigma_n=I$ on $\Omega_{\delta_n},\,\delta_n \to 0 $, such that $\sigma_n \to I$ in the sense of the
$G-$convergence,  but 
 \[ \limsup \|(\Lambda_{\sigma_n}- \Lambda_{I})\|_{\H \to H^{-1/2}}
>\frac{\alpha }{16(2+\alpha)}. \]
\end{Theorem}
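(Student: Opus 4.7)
The plan is to exhibit a piecewise-constant radial $\sigma_n$ that equals $I$ both near the boundary and near the origin, with a thin perturbation shell at distance $\sim \delta_n$ from $\partial B_1$, and then to test $\Lambda_{\sigma_n}-\Lambda_I$ against a Fourier mode of critical frequency $k_n\sim 1/\delta_n$. Intuitively such a mode still has substantial amplitude inside the shell, so the perturbation does register on the DN map; lower frequencies converge (since $\sigma_n\to I$ in $G$) and much higher frequencies are exponentially damped before reaching the shell, so both extremes are uninformative.

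I would work in $\Omega=B_1\subset\mathbb{R}^2$ and, for any sequence $\delta_n\downarrow 0$, define
\[
\sigma_n(x)\;=\;(1+\alpha)\,I\cdot\chi_{\{1-2\delta_n<|x|<1-\delta_n\}}+I\cdot\chi_{\{\text{otherwise}\}}.
\]
Then $\sigma_n\in M_{1+\alpha}$ and $\sigma_n\equiv I$ on $\Omega_{\delta_n}$. Since $\sigma_n-I$ is $L^\infty$-bounded and supported on a set of measure $O(\delta_n)$, a standard Cauchy--Schwarz estimate in the weak formulation of the Dirichlet problem (the perturbation term $\int(\sigma_n-I)\nabla u_n\cdot\nabla v$ tends to zero for each fixed $v\in H^1_0$, using the uniform $H^1_0$-bound on $u_n$ and the shrinking support) shows $\sigma_n\to I$ in the $G$-sense.

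Next I diagonalise $\Lambda_{\sigma_n}$ on the Fourier basis $\{e^{ik\theta}\}$. Writing $u=f(r)e^{ik\theta}$, the equation becomes an Euler ODE in each of the three regions $\{r<r_1\}$, $\{r_1<r<r_2\}$, $\{r_2<r<1\}$ (with $r_1=1-2\delta_n$, $r_2=1-\delta_n$), whose local solutions are $r^{\pm k}$. Regularity at $0$ kills the $r^{-k}$ mode in the innermost piece; together with $A+B=1$ in the outer piece $f_3=Ar^k+Br^{-k}$ and the transmission conditions (continuity of $f$ and of $\sigma f'$ at $r_1,r_2$), this fixes all coefficients linearly, and routine algebra gives, with $s=r_2^{2k}$ and $\rho=(r_1/r_2)^{2k}$,
\[
B_{n,k}\;=\;\frac{\alpha(2+\alpha)\,s(1-\rho)}{\alpha^2\rho-(2+\alpha)^2+\alpha(2+\alpha)\,s(1-\rho)}.
\]
Since $\Lambda_I(e^{ik\theta})=ke^{ik\theta}$ and $\Lambda_{\sigma_n}(e^{ik\theta})=k(A-B)e^{ik\theta}$, the eigenvalue of $\Lambda_{\sigma_n}-\Lambda_I$ on the $k$-th mode is $-2kB_{n,k}$.

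To finish, I pick $k_n$ so that $s_n=(1-\delta_n)^{2k_n}\to 1/2$ (take $k_n=\lfloor \log 2/(-2\log(1-\delta_n))\rfloor$); a short Taylor expansion gives $\rho_n\to 1/2$ as well, so $|B_{n,k_n}|\to \alpha(2+\alpha)/(\alpha^2+14\alpha+16)$. Testing against $\psi_n=e^{ik_n\theta}/\sqrt{k_n+1}$, which has unit $H^{1/2}$-norm, yields
\[
\|(\Lambda_{\sigma_n}-\Lambda_I)\psi_n\|_{H^{-1/2}}\;=\;\frac{2k_n|B_{n,k_n}|}{k_n+1}\;\longrightarrow\;\frac{2\alpha(2+\alpha)}{\alpha^2+14\alpha+16},
\]
and the desired inequality $2\alpha(2+\alpha)/(\alpha^2+14\alpha+16)>\alpha/(16(2+\alpha))$ reduces, after clearing denominators, to $31\alpha^2+114\alpha+112>0$. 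The main obstacle I anticipate is a transparent bookkeeping of the six-parameter linear system for the $a_i, b_i$; once that is done, the frequency balancing $k_n\delta_n\sim(\log 2)/2$ does the rest, extracting an $O(1)$ response from an $O(\delta_n)$ perturbation of $I$.
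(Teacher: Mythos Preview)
Your proof is correct and follows essentially the same approach as the paper: a thin annular perturbation of $I$, diagonalization of the DN map on Fourier modes via the Euler ODE with transmission conditions, and a critical frequency choice $k_n\sim 1/\delta_n$ so that $r_2^{2k_n}\to 1/2$. The only cosmetic differences are that the paper parametrizes the annulus as $\{R^2<|x|<R\}$ (which makes $\rho=s$ exactly rather than asymptotically) and stops at the cruder bound $\alpha/(16(2+\alpha))$, whereas your explicit limit $2\alpha(2+\alpha)/(\alpha^2+14\alpha+16)$ is sharper.
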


\begin{proof}
Take  $ \alpha> 0$, $\Omega=B(0,1)\subset \mathbb R^2$, and   
 consider  the family of isotropic  conductivities,
 $\sigma_R,\, R<1,$   
$$\sigma_R= \gamma_R \,I=(\chi_{B(0,1)}(x)+ \alpha\chi_{B(0,R)-B(0,R^2)}(x))\,I$$
For these conductivities   we  have, on one hand, that
when $R \to 1$, then $\sigma_R$  $G$-converge  to $I$. On the  other hand,  we have  the  expression,
$$\langle(\Lambda_R- \Lambda_1)e^{ik\theta}, e^{il\theta}\rangle =
\delta_k^l |k|m_k,$$
where 
$$m_k=\frac{2\alpha(2+\alpha)(R^{2|k|}-R^{4|k|})}{(2+\alpha)^2- \alpha^2R^{2|k|}-\alpha(2+\alpha)(R^{2|k|}-R^{4|k|})}.$$
Taking the $H^\alpha-$norm on $\p B(0,1)$ of the form
\[
\|u\|^2_{H^\alpha}= |u_0|^2+
\sum_{k\neq 0}|k|^{2\alpha} |u_k|^2,\quad \hbox{if}\,\,
u= \sum u_k e^{ik\theta},
\]
we thus have
$$\|(\Lambda_{R}- \Lambda_1)e^{ik\theta}\|_{H^{-1/2}}=|k|^{1/2} m_k= |m_k| \|e^{ik\theta}\|_{H^{1/2}}.$$
Then, 
assuming $R>(3/4)^{1/4} $  and choosing
$k=[\frac{-1}{2\log_2 R}] $, we see that 
$$
\|(\Lambda_{R}- \Lambda_1)\|_{\H \to H^{-1/2}}
>\frac{\alpha }{16(2+\alpha)}.
$$
Hence, choosing $\sigma_n=\sigma_{R(n)},\, R(n) \to 1$ as $n \to \infty$, we see that
there is no convergence $\Lambda_n\to \Lambda_I$ in the operator  norm. 
\end{proof}

\end{section}

\begin{section} {Stability with respect to the G-convergence}
Stability with respect to the  G-convergence has been proved in 2D 
by Alessandrini and Cabib  in \cite{AC2} assuming further that 
$\nabla \cdot \sigma=0$. As discussed also in that paper, the lack of 
uniqueness in the Calder\'on problem in the anisotropic case prevents 
stability in the general case. Compactness arguments show that this is 
the only obstruction.   Moreover in dimension 2, it is known \cite[Theorem 1]{ALP} 
that the lack of uniqueness is due to a
 quasiconformal change of variables.  Since changes of variables preserve the G-convergence, the unconditional stability, 
 in the introduction, follows.  We will first recall the basic definitions, then prove that indeed  the 
G-convergence is preserved by the change of variables
  and finally will combine it all to prove Theorem~\ref{stability}.

For a constant $K \ge 1$, 
a $K$-quasiconformal mappings is a homeomophism $F:\mathbb{C} \to \mathbb{C}$ which belongs to 
$W^{1,2}_{\textrm{loc}} (\C)$ and such that 
\begin{equation}\label{ChangeXX}
\|DF(x)\|^2 \le K J_F,\quad J_F= \det(DF).
\end{equation}

Given $\sigma \in M_K(\Omega)$,
 its associated quadratic form $ l_\sigma:H_0^{1}(\Omega)   \to \mathbb{R}$ is
 defined by 
 \[ l_\sigma[u]=\int_\Omega \langle  \sigma \nabla u, \nabla u \rangle, \]
cf. (\ref{q-form}). 
Let $F=I$ at $\partial \Omega$. Then
$F_*(\sigma)$ is formally given by 

\begin{equation} l_{F_*(\sigma)} ( u,v)=l_{\sigma}  (F^*(u),F^*(v))= l_{\sigma}( u \circ F, v\circ F). \end{equation}
Expressing the push forward $F_*$ in coordinates, we have

\[F_*(\sigma)(y)= J_F^{-1}(x) DF(x) \sigma  DF(x)^t_{|F^{-1}(y)=x}. \]
It is straightforward to see that that, if $F$ is $K-$quasiconformal, then

\begin{equation}\label{K2K2}
F_*:M_{K}(\Omega) \to M_{K^2}(\Omega).
\end{equation}
Together with the fact that 
$F=I$ on $\partial \Omega$, this implies that $F_*$ is bijective in $H^1_0(\Omega)$
  and, by duality, in $H^{-1}(\Omega)$. 
Explicitly, if $f \in H^{-1}(\Omega)$ 
we solve 

\begin{equation}\label{ChangeXXI}
\triangle v=f,\quad f \in H^{-1}(\Omega) \quad \hbox{ with}\,\,v \in H_0^{1}(\Omega),
\end{equation}
and notice that
\begin{equation}\label{Fstar}
F^*(f)=  \nabla \cdot (F_*(I) \nabla (v \circ F^{-1} )) \in H^{-1}(\Omega).
\end{equation}
Now, since $\langle f, F^*(\varphi) \rangle= \langle F^*(f), \varphi \rangle$ 
it follows that  

\begin{equation} \label{commutes} L_{F_*(\sigma)}^{-1}(F^*(f)) = F^*(L_\sigma^{-1}(f))   \end{equation}

\begin{Lemma}\label{preserved}
Let $F$ be a quasiconformal homeomorphism fixing the boundary of a planar domain $\Omega$. Let
 $\sigma_n, \sigma \in M_{K}(\Omega)$. Then $\sigma_n \to \sigma$ in the sense of G convergence if and only 
 if $F_*(\sigma_n) \to F_*(\sigma)$ in the sense of G convergence. 
\end{Lemma}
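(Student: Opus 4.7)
The plan is to use the commutation identity \eqref{commutes} together with the fact that $F^*$ acts as a bounded linear isomorphism on both $H^1_0(\Omega)$ and $H^{-1}(\Omega)$, which in particular implies it is weakly continuous on these spaces. Since it suffices by symmetry (recall $F^{-1}$ is also quasiconformal and fixes $\partial \Omega$) to prove one implication, I will focus on the direction $\sigma_n \to \sigma$ implies $F_*(\sigma_n) \to F_*(\sigma)$.

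First I would verify that $F^*: H^1_0(\Omega) \to H^1_0(\Omega)$, $u \mapsto u\circ F$, is a bounded linear bijection: boundedness follows from \eqref{ChangeXX} and the quasiconformal chain rule in $W^{1,2}$, the bijection is provided by $(F^{-1})^*$ (since $F^{-1}$ is quasiconformal with the same boundary property), and the fact that $F|_{\partial \Omega}=\mathrm{id}$ preserves the vanishing trace. By duality this yields a bounded linear bijection $F^*: H^{-1}(\Omega) \to H^{-1}(\Omega)$; an explicit realization is given by formula \eqref{Fstar}, which is well posed thanks to \eqref{K2K2}. In particular $F^*$ is weakly continuous on $H^1_0(\Omega)$.

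Given $g \in H^{-1}(\Omega)$, I would write $g = F^*(f)$ for the unique $f \in H^{-1}(\Omega)$. Applying the identity \eqref{commutes} to both $\sigma_n$ and $\sigma$ and subtracting, I obtain
\begin{equation}
L_{F_*(\sigma_n)}^{-1}(g) - L_{F_*(\sigma)}^{-1}(g) = F^*\bigl(L_{\sigma_n}^{-1}(f) - L_{\sigma}^{-1}(f)\bigr).
\end{equation}
The $G$-convergence hypothesis on $\sigma_n$ gives $L_{\sigma_n}^{-1}(f) - L_{\sigma}^{-1}(f) \rightharpoonup 0$ weakly in $H^1_0(\Omega)$. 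Composing with the bounded operator $F^*$ on $H^1_0(\Omega)$ preserves weak convergence, so the right-hand side tends weakly to zero in $H^1_0(\Omega)$. As $g$ was arbitrary, this establishes that $F_*(\sigma_n)$ $G$-converges to $F_*(\sigma)$.

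The converse is obtained by applying the same argument with $F$ replaced by $F^{-1}$, which is quasiconformal, fixes $\partial \Omega$, and satisfies $(F^{-1})_* (F_*(\sigma)) = \sigma$. The only delicate point in the plan is ensuring that \eqref{commutes} is genuinely an equality in $H^1_0(\Omega)$, not just a formal manipulation, which however is exactly the content of \eqref{ChangeXXI}–\eqref{commutes} in the excerpt combined with the bijectivity of $F^*$; once this is granted the rest is routine functional analysis.
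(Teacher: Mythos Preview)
Your proof is correct and follows essentially the same route as the paper: both arguments reduce the claim, via the commutation identity \eqref{commutes} and the bijectivity of $F^*$ on $H^{-1}$, to the weak continuity of $F^*$ on $H^1_0(\Omega)$. The only difference is cosmetic: you invoke the general fact that a bounded linear operator is weak--weak continuous, whereas the paper verifies this directly by a compactness/subsequence argument passing through $L^2$ and a.e.\ convergence.
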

\begin{proof}
By the definition of the $G$-convergence and (\ref{commutes}) it is enough to show that if $u_n \in H_0^1(\Omega)$ converges
weakly to $u$ then $F^*(u_n)$ converges weakly to $F^*(u)$.   Since 
 $F_*: H^1_0(\Omega) \to H^1_0(\Omega)$,
 there is $C>0$ such that
\[
\|u_n \circ F^{-1}\|_{H^1_0} \leq C.
\]

By the weak compactness of  $H^1_0(\Omega)$, it follows that  the sequence
$ u_n \circ F^{-1}$ has a subsequence converging weakly in $H^1_0$ and, therefore, 
strongly
in $L^2$, to some $ v \in H^1_0$.  On the other hand, the subsequence of $u_n$ has a further 
subsequence which converges almost 
everywhere to $u$ and thus $F^*(u_n) \to F^*(u)$ almost 
everywhere. 
By the uniqueness of the weak limits $v=F^*(u)$. 
\end{proof}

\emph{Proof of Theorem~\ref{stability}}
Due to Lemma \ref{Gconvergencia}, it is sufficient to prove that (\ref{29.1.07})
implies (\ref{3.17.4}).

Recall that $M_K$ is compact (\cite[Theorem 22.3]{D}) and metrizable (\cite[Corollary 10.23]{D})
with respect to the G-topology. Hence, if (\ref{29.1.07}) is valid, the sequence  
$\sigma_n$ (and any its subsequence)  
has a subsequence $\sigma_{n(k)},\, k=1, 2, \dots,$ 
which converges in 
the $G-$sense to a limit conductivity denoted 
by $\tilde{\sigma},\,
\tilde{\sigma} \in M_K$. It
then follows from the weak definition of the DN map,
(\ref{weak formulation}), that
\[
\Lambda_{\sigma}=\Lambda_{\tilde{\sigma}}.
\] 
Thus, since the $G$-topology is metrizable, if we define, for $\tilde K>1$,
\begin{equation} \label{2.17.4}
M_{\tilde K}(\sigma)=\{ \tilde{\sigma} \in 
M_{\tilde K}:\, \Lambda_{\sigma}=\Lambda_{\tilde{\sigma}} \}, 
\end{equation}
it follows that 
\[
\lim_{n \to \infty} d_G(\sigma_n, M_{K}(\sigma))=0,
 \]
where $d_G$ is a distance in $M_{K}$ inducing the $G-$topology. 

The above arguments work in any dimension but 
in 2D the sets $M_{\tilde K}(\sigma)$ are described in \cite[Theorem 1]{ALP}. Namely, 
\begin{equation} \label{2.17.5}
M_{\tilde K}(\sigma)=\{ \tilde{\sigma} \in M_{\tilde K}:\,  
\tilde{\sigma} =F^*(\sigma)\text{ for some quasiconformal map }F \text{ with } F_{|\partial \Omega}=I\}, 
\end{equation}

Thus, it is enough to prove 
that
$$\lim _{n\to \infty}d_G(\sigma, M_{K^2}(\sigma_n))=0,$$
see (\ref{K2K2}).
We prove this by contradiction. Assume that
  there is $\e >0$ and a subsequence $n(k)$, such that
\[
d_G(\sigma, M_k) >\e,\text{ where } \quad M_k= M_{K^2}(\sigma_{n(k)}).
\]
Since $M_{K^2}(\Omega)$ is G-compact
there is a non-relabelled subsequence of $\sigma_{n(k)}$ which is $G-$convergent
to $\tilde \sigma \in M_{K^2}$.  Since, for any $\tilde \sigma_n \in M_{K^2}(\sigma_n)$,
we have $\Lambda_{\tilde \sigma_n}=\Lambda_{ \sigma_n}$, and $\Lambda_{ \sigma_n} \to 
\Lambda_{ \sigma}$ in the weak sense, we see that $\Lambda_{\tilde \sigma}=\Lambda_{ \sigma}$.
By \cite{ALP}, there is a quasiconformal $F, \,$
with $F=\hbox{id}$
on $\p\Omega$, such that
\[
F_*(\tilde \sigma)= \sigma.
\]

Now Lemma~\ref{preserved} implies that $F_*(\sigma_{n_k}) \in M_k $ converges to $\sigma$.
This is a contradiction. 

\end{section}

\end{document}